\newtheorem{theorem}{Theorem}[section]
\newtheorem{lemma}[theorem]{Lemma}
\newtheorem{corollary}[theorem]{Corollary}
\newtheorem{proposition}[theorem]{Proposition}
\theoremstyle{definition}
\newtheorem{definition}[theorem]{Definition}
\newtheorem{question}[theorem]{Question}
\theoremstyle{remark}
\newtheorem{remark}[theorem]{Remark}
\newcommand{\mc}[1]{\mathcal{#1}}
\newcommand{\mf}[1]{\mathfrak{#1}}
\newcommand{\concat}[0]{\textrm{\^{}}}
\newcommand{\la}{\langle}
\newcommand{\ra}{\rangle}
\newcommand{\comp}{\mathtt{c}}
\newcommand{\Ahat}{\hat{\mc{A}}}
\newcommand{\Bhat}{\hat{\mc{B}}}
\newcommand{\Atilde}{\tilde{\mc{A}}}
\newcommand{\Btilde}{\tilde{\mc{B}}}
\DeclareMathOperator{\Dom}{Dom}
\DeclareMathOperator{\Th}{Th}
\DeclareMathOperator{\Iso}{Iso}
\DeclareMathOperator{\id}{id}
\begin{document}

\title{Finitely Generated Groups Are Universal}

\author{Matthew Harrison-Trainor \and Meng-Che Ho}

\maketitle

\abstract{Universality has been an important concept in computable structure theory. A class $\mc{C}$ of structures is universal if, informally, for any structure, of any kind, there is a structure in $\mc{C}$ with the same computability-theoretic properties as the given structure. Many classes such as graphs, groups, and fields are known to be universal.

This paper is about the class of finitely generated groups. Because finitely generated structures are relatively simple, the class of finitely generated groups has no hope of being universal. We show that finitely generated groups are as universal as possible, given that they are finitely generated: for every finitely generated structure, there is a finitely generated group which has the same computability-theoretic properties. The same is not true for finitely generated fields. We apply the results of this investigation to quasi Scott sentences.}

\section{Introduction}

Whenever we have a structure with interesting computability-theoretic properties, it is natural to ask whether such examples can be found within particular classes. While one could try to adapt the proof within the new class, it is often simpler to try and code the original structure into a structure in the given class. It has long been known that for certain classes, such as graphs, this is always possible. Hirschfeldt, Khoussainov, Shore, and Slinko \cite{HKSS} proved that classes of graphs, partial orderings, lattices, integral domains, and 2-step nilpotent groups are ``complete with respect to degree spectra of nontrivial structures, effective dimensions, expansion by constants, and degree spectra of relations''. What this means is that for every structure $\mc{A}$, there is a structure in each of these classes which has the same degree spectrum, effective dimension, etc.\ as $\mc{A}$.

This list of properties is not a complete list of all possible computability-theoretic properties, but the method of proof is sufficiently general that any reasonable computability theoretic property can be added to the list. These classes of structures have been informally called universal.

Recently, Miller, Poonen, Schoutens, and Shlapentokh \cite{MPSS} added fields to the list of universal classes. In that paper, a category-theoretic language was used: it was shown that, for any structure $\mc{A}$, there is a $F$ and a faithful functor from copies of $F$ to copies of $\mc{A}$. It was shown that this implies that $F$ and $\mc{A}$ share various computability-theoretic properties. Such a functor can be found for each of the classes from \cite{HKSS}.

Around the same time, Montalb\'an \cite{MonFixed} introduced effective interpretations, which are interpretations as in model theory but using computable $\Sigma^{\comp}_1$ formulas, and with the domain of the interpreted structure allowing tuples of any finite size. Montalb\'an showed that two structures which are bi-interpretable share many of the same computability-theoretic properties. The results from \cite{HKSS} can also be phrased in terms of bi-interpretations.

Thus we have two different possible definitions of universality, one coming from the category-theoretic language, and one coming from the language of effective interpretations. In \cite{HTMelnikovMillerMontalban}, the first author together with Melnikov, Miller, and Montalb\'an showed that the two notions of universality are actually equivalent. 

This paper follows up on some observations made during the authors' previous work in \cite{HTHo}. There, the authors answered the question of whether every finitely generated group has d-$\Sigma^0_2$ Scott sentence by constructing a finitely generated group with no d-$\Sigma^0_2$ Scott sentence. The strategy was to first figure out how to build a finitely generated structure with no d-$\Sigma^0_2$ Scott sentence, and then to code that structure into a finitely generated group. We next wanted to work on several questions about quasi Scott sentences of finitely generated groups, and found ourselves using the same approach. While the class of finitely generated groups has no hope of being universal, we thought that we should try to use the ideas of universality. Our main result is:
\begin{theorem}\label{thm:main}
The class of finitely generated groups is universal among finitely generated structures, after naming finitely many constants.
\end{theorem}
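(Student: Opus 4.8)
The plan is to show that for every finitely generated structure $\mc{A}$, there is a finitely generated group $G$ (in a language with finitely many named constants) that is effectively bi-interpretable with $\mc{A}$. By the equivalence of the two notions of universality established in \cite{HTMelnikovMillerMontalban}, and since effective bi-interpretability transfers degree spectra, effective dimension, and related invariants (following \cite{MonFixed}), this suffices.

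The approach I would take proceeds in two stages. First, I would reduce to a canonical combinatorial object: since the classes of graphs (or directed graphs) are universal among \emph{all} structures via effective bi-interpretability, and since a finitely generated structure is coded by a finitely generated graph, it is enough to code an arbitrary finitely generated (directed) graph $\Gamma$, with a distinguished finite generating tuple, into a finitely generated group. So the real content is: given a finitely generated graph $\Gamma$ on vertex set $V$ generated by finitely many vertices $v_1,\dots,v_n$ under the edge relation, build a finitely generated group $G$ with named constants $g_1,\dots,g_k$ such that $G$ and $\Gamma$ are effectively bi-interpretable. Second, I would carry out this coding using a small-cancellation or HNN/amalgamated-products construction: represent each vertex of $\Gamma$ by a group element (e.g. a conjugate of a fixed generator $a$ by a word in the other generators), encode the "is an edge" relation by a first-order-with-parameters condition on pairs of such elements (for instance, solvability of an equation, or membership in a specified subgroup), and use relators to ensure the word problem and the edge predicate are computable uniformly from the diagram of $G$ and conversely. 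The named constants are exactly the images of $v_1,\dots,v_n$ together with the finitely many generators of $G$ needed to make the interpretation definable without quantifying over a generating set we cannot pin down.

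The key steps, in order, would be: (i) set up the target: fix a presentation scheme for $G$ built from $\Gamma$, with generators $\{a\} \cup \{b_w : w \in V\}$ informally, then cut down to finitely many generators using the finite generation of $\Gamma$ and standard Higman-embedding-style tricks so that only finitely many generators and relators survive; (ii) verify the forward interpretation $\Gamma \leq G$: give computable $\Sigma^{\comp}_1$ formulas (with the named parameters) defining the set of "vertex elements", the equality relation on them modulo the group relation, and the edge relation, and check uniformity; (iii) verify the backward interpretation $G \leq \Gamma$: recover the generators of $G$ and the multiplication table from $\Gamma$ together with the parameters, again by computable $\Sigma^{\comp}_1$ formulas — here one exploits that once $\Gamma$ is known, the presentation of $G$ is determined, so group multiplication is an effectively-presented operation; (iv) check that the composed interpretations $\Gamma \to G \to \Gamma$ and $G \to \Gamma \to G$ are effectively isomorphic to the identity, giving bi-interpretability; (v) conclude universality and, as promised in the abstract, derive the consequences for quasi Scott sentences. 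For the fields counterexample I would separately observe that finitely generated fields are exactly the finitely generated extensions of prime fields and hence have very limited complexity (their isomorphism type is captured by a small amount of data), so they cannot code an arbitrary finitely generated structure.

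The main obstacle I anticipate is step (iii), the backward interpretation: it is easy to code a graph \emph{into} a group, but we need the group to contain \emph{no more information} than the graph, i.e. the word problem of $G$ must be computable relative to the atomic diagram of $\Gamma$, uniformly, and definably by a computable $\Sigma^{\comp}_1$ formula with the fixed parameters. This forces the presentation of $G$ to be "transparent" — every relator must be readable off from $\Gamma$ — which rules off-the-shelf small-cancellation constructions that introduce auxiliary relators whose presence is hard to detect from $\Gamma$ alone. Managing this while keeping $G$ finitely generated (the whole point — an infinitely generated coding is routine) is where the delicate engineering lies; I expect to need a construction where the generating set of $G$ is finite and fixed in advance, and the graph $\Gamma$ is encoded purely in \emph{which} relations among a computable list of candidate relators actually hold, so that "$w$ is a relator of $G$" is $\Delta^0_1$ in the diagram of $\Gamma$.
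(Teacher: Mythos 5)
Your overall shape — build a small-cancellation group, define the vertex/element set and the structure by computable formulas, recover the word problem from the diagram of $\mc{A}$, name finitely many constants to make the interpretation definable — matches the paper at a high level, and you correctly identified the hard point: the backward interpretation must keep $G$ from containing more information than $\mc{A}$. But there is a genuine gap at the start of your reduction, and a second one in how you propose to keep $G$ finitely generated.

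The reduction to graphs destroys exactly the property you need to preserve. In a purely relational language, ``finitely generated'' means ``finite,'' so a finitely generated (directed) graph is a finite graph. The standard universality reduction to graphs does not preserve finite generation: an infinite finitely generated $\mc{L}$-structure (for $\mc{L}$ with function symbols) becomes an infinite graph, which is not a finitely generated graph. So ``it is enough to code a finitely generated graph $\Gamma$'' is false — the class of finitely generated graphs is too small to be universal among finitely generated structures, and the reduction you are relying on to get there loses finite generation before you even reach the group construction. The paper sidesteps this entirely by coding an arbitrary $\mc{L}$-structure $\mc{A}$ directly: it keeps the function symbols of $\mc{L}$ alive inside the group as relators of the form $w_m(\bar a,b)=a'$, and it is precisely these relators (encoding $f(\bar a)=a'$) that make $G(\mc{A})$ finitely generated when $\mc{A}$ is. That is, the group $G(\mc{A})$ has one generator for each element of $A$ — it is not cut down to a finite generating set in advance — and finite generation of $G(\mc{A})$ is proved (Lemma~\ref{lem:finite-generation}) as a consequence of the fact that every element of $A$ is, in $G(\mc{A})$, a word in $b$ and the $\mc{L}$-generators of $\mc{A}$.

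Relatedly, your step (i) proposes to ``cut down to finitely many generators using \dots Higman-embedding-style tricks.'' This would not give an effective bi-interpretation: Higman-type embeddings introduce auxiliary data into the target group that cannot be read back off, and the paper needs both directions of the interpretation to be $\Delta^{\comp}_1$ and uniform. Finally, you propose to name, among other things, the images of the generators $v_1,\dots,v_n$ of $\Gamma$ as constants; the number of generators of $\mc{A}$ is not bounded over the class, so this makes the language (and hence the formulas of the bi-interpretation) vary with $\mc{A}$, violating the uniformity requirement in Definition~\ref{def:universal}. The paper instead names a fixed tuple of five auxiliary elements $b,c,d,f_1,f_2$ whose orbit is definable by a single finitary quantifier-free formula (Lemma~\ref{lem:orbit}), independent of $\mc{A}$.
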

\noindent What this means is that each finitely generated structure can be coded into a finitely generated group in a way that maintains its computability-theoretic properties. (Many, but not all, computability-theoretic properties are invariant under naming finitely many constants.) The statement of this theorem will be formally defined in Section \ref{sec:universal} and proved in Section \ref{sec:groups}. See Section \ref{sec:naming-constants} for an explanation of why we must name finitely many constants In Section \ref{sec:fields} we show that certain classes of structures, such as finitely generated fields, are not universal among finitely generated structures.

A quasi Scott sentence for a finitely generated structure $\mc{A}$ is an $\mc{L}_{\omega_1 \omega}$ sentence which describes $\mc{A}$ uniquely among finitely generated structures; thus there may be other structures, which are not finitely generated, which also satisfy $\varphi$. In Section \ref{sec:quasi}, we begin by proving some general results about quasi Scott sentences. We use the universality of finitely generated groups to transfer these results to that class.

\section{Background on Functors, Interpretations, and Universality}\label{sec:universal}

\subsection{Infinitary Sentences}

The infinitary logic $\mc{L}_{\omega_1 \omega}$ is the logic which allows countably infinite conjunctions and disjunctions but only finite quantification. Each formula has only finitely many free variables. If the conjunctions and disjunctions of a formula $\varphi$ are all over computable sets of indices for formulas, then we say that $\varphi$ is computable. 

We use the following recursive definition to define the hierarchy of complexity of $\mc{L}_{\omega_1\omega}$ formulas:
\begin{itemize}
\item An $\mc{L}_{\omega_1 \omega}$ formula is both $\Sigma_0^0$ and $\Pi_0^0$ if it is quantifier free and does not contain any infinite disjunction or conjunction. 
\item An $\mc{L}_{\omega_1 \omega}$ formula is $\Sigma_\alpha^0$ if it is a countable disjunction of formulas of the form $\exists \bar{x} \phi$ where each $\phi$ is $\Pi_\beta^0$ for some $\beta < \alpha$, and there are only finitely many free variables among all the formulas $\exists \bar{x} \phi$.
\item An $\mc{L}_{\omega_1 \omega}$ formula is $\Pi_\alpha^0$ if it is a countable conjunction of formulas of the form $\forall \bar{x} \phi$ where each $\phi$ is $\Sigma_\beta^0$ for some $\beta < \alpha$, and there are only finitely many free variables among all the formulas $\forall \bar{x} \phi$.
\end{itemize}
We say a formula is $\text{d-}\Sigma_\alpha^0$ if it is a conjunction of a $\Sigma_\alpha^0$ formula and a $\Pi_\alpha^0$ formula. The following diagram illustrates this hierarchy, from the simplest formulas on the left to the more complicated formulas on the right:

\[
\xymatrix@=7pt{
                                                      & \Sigma^0_1 \ar[dr]&                                      &                                                      &\Sigma_2^0\ar[dr]&&& \Sigma^0_3 \ar[dr] & & \\
\Sigma^0_1\cap\Pi^0_1 \ar[ur]\ar[dr] &                            &\text{d-}\Sigma^0_1 \ar[r]&\Sigma^0_2\cap\Pi^0_2 \ar[ur]\ar[dr]&&\text{d-}\Sigma^0_2 \ar[r]&\Sigma^0_{3}\cap\Pi^0_{3} \ar[ur] \ar[dr] & & \text{d-}\Sigma^0_3 \ar[r] & \cdots\\
                                                      & \Pi^0_1 \ar[ur]      &                                      &                                                     &\Pi_2^0\ar[ur]&&& \Pi^0_3    \ar[ur] & &
}
\]

\subsection{Effective Interpretations}

An interpretation is a way of defining one structure inside of another structure. In the interpretations which are traditional in model theory, the domain consists of tuples all of the same arity, and the definitions are in elementary first-order logic. See, for example, \cite[Definition 1.3.9]{Marker02}. Here, we want to use a different notion, introduced in \cite[Definition 1.7]{MonFixed}, where we use tuples of arbitrary lengths, and our definitions are computable sentences in the infinitary logic $\mc{L}_{\omega_1 \omega}$.

\begin{definition}\label{def: eff int}
We say that a structure $\mc{A} = (A; P_0^\mc{A},P_1^\mc{A},...)$ (where $P_i^\mc{A}\subseteq A^{a(i)}$) is {\em effectively interpretable} in $\mc{B}$ if there exist a $\Delta^{\comp}_1$-definable (in the language of $\mc{B}$, without parameters) sequence of relations $(\Dom_{\mc{A}}^{\mc{B}}, \sim, R_0, R_1,...)$ such that
\begin{enumerate}
	\item $\Dom^{\mc{B}}_{\mc{A}}\subseteq \mc{B}^{<\omega}$,
	\item $\sim$  is an equivalence relation on $\Dom^{\mc{B}}_{\mc{A}}$, 
	\item $R_i\subseteq (B^{<\omega})^{a(i)}$ is closed under $\sim$ within $\Dom_\mc{A}^\mc{B}$,
\end{enumerate}
and there exists a function $f^\mc{B}_\mc{A}\colon \Dom^{\mc{B}}_{\mc{A}} \to \mc{A}$ which induces an isomorphism: 
\[
(\Dom^{\mc{B}}_{\mc{A}}/\sim; R_0/\sim,R_1/\sim,...) \cong (A; P_0^\mc{A},P_1^\mc{A},...),
\]
where $R_i/ \sim$ stands for the $\sim$-collapse of $R_i$.
\end{definition}

There is also a notion of bi-interpretation, where not only is one structure interpretable in the other, and vice versa, but the two interpretations compose in a nice way.

\begin{definition}
\label{defn:eff-biinterpretable}
Two structures $\mc{A}$ and $\mc{B}$ are {\em effectively bi-interpretable} if there are effective
interpretations of each structure in the other as in Definition \ref{def: eff int} such that the compositions
\[
f^\mc{A}_\mc{B} \circ \tilde{f}^\mc{B}_\mc{A}\colon \Dom_\mc{B}^{(\Dom_\mc{A}^\mc{B})} \to \mc{B} 
\quad\mbox{ and }\quad
f^\mc{B}_\mc{A} \circ \tilde{f}^\mc{A}_\mc{B}\colon \Dom_\mc{A}^{(\Dom_\mc{B}^\mc{A})} \to \mc{A} 
\]
are $\Delta^{\comp}_1$-definable in $\mc{B}$ and $\mc{A}$ respectively.
(Here $\Dom_\mc{B}^{(\Dom_\mc{A}^\mc{B})}\subseteq (\Dom_\mc{A}^\mc{B})^{<\omega}$, and $\tilde{f}^\mc{B}_\mc{A}\colon (\Dom_\mc{A}^\mc{B})^{<\omega}\to \mc{A}^{<\omega}$ is the obvious extension of $f^\mc{B}_\mc{A}\colon \Dom_\mc{A}^\mc{B}\to \mc{A}$ mapping $\Dom_\mc{B}^{(\Dom_\mc{A}^\mc{B})}$ to $\Dom_\mc{B}^\mc{A}$.)
\end{definition}

Two structures which are bi-interpretable are essentially the same from the point of view of computability theory. In \cite[Lemma 5.3]{MonICM} it is shown that if $\mc{A}$ and $\mc{B}$ are effectively bi-interpretable then: 
they have the same degree spectrum;
they have the same computable dimension;
they have the same Scott rank;
their index sets are Turing equivalent (assuming the structures are infinite);
$\mc{A}$ is computably categorical if and only if $\mc{B}$ is;
$\mc{A}$ is rigid if and only if $\mc{B}$ is;
$\mc{A}$ has the  c.e.\ extendability condition if and only if $\mc{B}$ does;
for every $R\subseteq\mc{A}^{<\omega}$, there is a $Q\subseteq\mc{B}^{<\omega}$ which has the same relational degree spectrum, and vice-versa;
and the jumps of $\mc{A}$ and $\mc{B}$ are effectively bi-interpretable too.

To talk about universality, we want a notion of interpretability between classes of structures.

\begin{definition}[\cite{MonICM}]
Say that a class $\mf{C}$ is \emph{reducible via effective bi-interpretability} to a
class $\mf{D}$ if for every $\mc{C} \in \mf{C}$ there is a $\mc{D} \in \mf{D}$ such that $\mc{C}$ and $\mc{D}$ are effectively bi-interpretable and furthermore the formulas defining the bi-interpretation do not depend on the choice of $\mc{C}$ and $\mc{D}$.
\end{definition}

\subsection{Computable Functors}

We write $\Iso(\mc{A})$ for the isomorphism class of a countably infinite structure $\mc{A}$:
\[ \Iso(\mc{A}) = \{\Ahat~:~\Ahat\cong\mc{A} \text{ and } \text{dom}(\Ahat)=\omega\}. \]
We will regard $\Iso(\mc{A})$ as a category, with the copies of the structures as its objects and the isomorphisms among them as its morphisms.

\begin{definition}\label{def:functors}
By a {\em functor from $\mc{A}$ to $\mc{B}$} we mean a functor from $\Iso(\mc{A})$ to  $\Iso(\mc{B})$, that is, a map $F$ that assigns to each copy $\Ahat$ in $\Iso(\mc{A})$ a structure $F(\Ahat)$ in $\Iso(\mc{B})$, and assigns to each morphism $f\colon \Ahat \to \Atilde$ in $\Iso(\mc{A})$ a morphism $F(f) \colon F(\Ahat) \to F(\Atilde)$ in $\Iso(\mc{B})$ so that the  two properties below hold:
\begin{itemize}
	\item $F(\id_{\Ahat}) = \id_{F(\Ahat)}$ for every $\Ahat \in \Iso(\mc{A})$, and
	\item $F(f \circ g) = F(f) \circ F(g)$ for all morphisms $f,g$ in $\Iso(\mc{A})$.
\end{itemize}

A functor $F\colon \Iso(\mc{A}) \rightarrow \Iso(\mc{B})$ is {\em computable} if
there exist two computable operators $\Phi$ and $\Phi_*$ such that
\begin{itemize}
\item for every $\Ahat \in \Iso(\mc{A})$, $\Phi^{D(\Ahat)}$ is the atomic diagram of $F(\Ahat) \in \Iso(\mc{B})$;
\item for every morphism $f:\Ahat\to\Atilde$ in $\Iso(\mc{A})$,
$\Phi_*^{D(\Ahat)\oplus f\oplus D(\Atilde)}  = F(f). $
\end{itemize}
Here, $D(\Ahat)$ denotes the atomic diagram of $\Ahat$.
\end{definition}

Given an effective interpretation of a structure $\mc{A}$ inside of a structure $\mc{B}$, we get an induced computable functor from $\Iso(\mc{B})$ to $\Iso(\mc{A})$ by mapping a copy of $\mc{B}$ to the copy of $\mc{A}$ which is interpreted inside of it. One of the main results of \cite{HTMelnikovMillerMontalban} is that this reverses, i.e.\ each computable functor is induced by an effective interpretation in this way.

The next definition is the notion of computable isomorphism between computable functors.

\begin{definition}\label{def:effeq}
A functor $F\colon \Iso(\mc{B}) \rightarrow \Iso(\mc{A})$ is {\em effectively naturally isomorphic} (or just {\em effectively isomorphic}) to a functor $G\colon \Iso(\mc{B}) \rightarrow \Iso(\mc{A})$   if there is a computable Turing functional $\Lambda$ such that for every $\Btilde \in \Iso(\mc{B})$, $\Lambda^{\Btilde}$ is an isomorphism from $F(\Btilde)$ to $G(\Btilde)$, and the following diagram commutes for every $\Btilde, \Bhat \in \Iso(\mc{B})$ and every morphism $h\colon \Btilde \to \Bhat$:
\[
\xymatrix{
F(\Btilde)\ar[d]_{F(h)}\ar[r]^{\Lambda^{\Btilde}} &      G(\Btilde)\ar[d]^{G(h)}   \\
F(\Bhat)\ar[r]_{\Lambda^{\Bhat}}    & G(\Bhat)
}\]
\end{definition}

Given a class $\mf{C}$ of countable structures, we can view $\mf{C}$ as a category. The objects are the presentations, with domain $\omega$, of the structures in $\mf{C}$ and the morphisms are the isomorphisms between these presentations. We can extend the definition of a computable functor to arbitrary classes by allowing the oracles of $\Phi$ and $\Phi_*$ to range over the objects and morphisms of $\mf{C}$. We can then define a functorial notion of reducibility between two classes of structures.

\begin{definition}[\cite{HTMelnikovMillerMontalban}]
Say that a class $\mf{C}$ is \emph{reducible via effective adjoint equivalence} to a class $\mf{D}$ if there exist a subclass $\mf{D}' $ of $\mf{D}$ and computable functors $F\colon \mf{C} \rightarrow \mf{D}'$, $G\colon \mf{D}' \rightarrow \mf{C}$ such that $F \circ G$ and $G \circ F$ are effectively naturally isomorphic to the identity.
\end{definition}

Once again, this is is equivalent to the notion of reducibility by bi-interpretation.

\begin{theorem}[Theorem 1.12 of \cite{HTMelnikovMillerMontalban}]\label{thm:equiv-reducibilities}
A class $\mf{C}$ is reducible via effective bi-interpretability to a class $\mf{D}$ if and only if $\mf{C}$ is reducible via effective adjoint equivalence to the class $\mf{D}$.

Moreover, if $\mf{C}$ is reducible via effective adjoint equivalence to a class $\mf{D}$, the reduction via effective bi-interpretability that one obtains induces a reduction via effective adjoint equivalence that is effectively naturally isomorphic to the original one.
\end{theorem}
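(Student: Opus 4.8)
The plan is to reduce both implications to the correspondence, established in \cite{HTMelnikovMillerMontalban}, between computable functors and effective interpretations: every effective interpretation of $\mc{A}$ in $\mc{B}$ induces a computable functor $\Iso(\mc{B})\to\Iso(\mc{A})$, and conversely every computable functor $\Iso(\mc{B})\to\Iso(\mc{A})$ is effectively naturally isomorphic to one arising in this way. Moreover this passage is uniform in the data, it is compatible with composition (composing interpretations corresponds, up to $\Delta^{\comp}_1$-definable isomorphism of interpretations / effective isomorphism of functors, to composing the induced functors), and it matches the identity functor with the trivial interpretation of a structure in itself. Granting this dictionary, each direction of the theorem becomes an unwinding of what the composition conditions of Definition \ref{defn:eff-biinterpretable} and Definition \ref{def:effeq} say on the two sides.

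For the forward implication, suppose $\mf{C}$ is reducible to $\mf{D}$ via effective bi-interpretability: for each $\mc{C}\in\mf{C}$ there is $\mc{D}\in\mf{D}$ with effective interpretations of $\mc{C}$ in $\mc{D}$ and of $\mc{D}$ in $\mc{C}$ whose composites $f^\mc{C}_\mc{D}\circ\tilde f^\mc{D}_\mc{C}$ and $f^\mc{D}_\mc{C}\circ\tilde f^\mc{C}_\mc{D}$ are $\Delta^{\comp}_1$-definable, all defining formulas being independent of $\mc{C},\mc{D}$. Let $G\colon\Iso(\mc{D})\to\Iso(\mc{C})$ and $F\colon\Iso(\mc{C})\to\Iso(\mc{D})$ be the induced computable functors; uniformity of the formulas makes the witnessing operators $\Phi,\Phi_*$ independent of $\mc{C},\mc{D}$, so $F$ and $G$ assemble into computable functors on the classes, with $\mf{D}'$ the subclass of those copies of the structures $\mc{D}$ that occur. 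Then $(F\circ G)(\hat{\mc{D}})$ is the copy of $\mc{D}$ interpreted inside the copy of $\mc{C}$ interpreted inside $\hat{\mc{D}}$, and by hypothesis $f^\mc{C}_\mc{D}\circ\tilde f^\mc{D}_\mc{C}$ is a $\Delta^{\comp}_1$-definable isomorphism from it onto $\hat{\mc{D}}$; such a map is computed by a single Turing functional uniformly in $\hat{\mc{D}}$, and, the defining formulas being parameter-free, this functional is natural, i.e.\ the square of Definition \ref{def:effeq} commutes. Handling $G\circ F$ symmetrically, $\mf{C}$ is reducible to $\mf{D}$ via effective adjoint equivalence.

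For the converse, let $F\colon\mf{C}\to\mf{D}'$ and $G\colon\mf{D}'\to\mf{C}$ be computable functors with $F\circ G$ and $G\circ F$ effectively naturally isomorphic to the identity. Applying the dictionary to $F\res\Iso(\mc{C})$ and to $G\res\Iso(\mc{D})$ yields effective interpretations of $\mc{D}$ in $\mc{C}$ and of $\mc{C}$ in $\mc{D}$, with defining formulas independent of $\mc{C},\mc{D}$ by uniformity of $\Phi,\Phi_*$. Now $F\circ G$ being effectively isomorphic to $\id$ on $\mf{D}'$ says that the composite interpretation of $\mc{D}$ in itself (through $\mc{C}$) induces a functor effectively isomorphic to the identity on $\Iso(\mc{D})$; since the dictionary is compatible with composition and sends the identity functor to the trivial interpretation, this forces that composite interpretation to be $\Delta^{\comp}_1$-definably isomorphic to the trivial one, which is exactly the assertion that $f^\mc{C}_\mc{D}\circ\tilde f^\mc{D}_\mc{C}$ is $\Delta^{\comp}_1$-definable in $\mc{D}$. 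Likewise for the other composite, so we obtain a reduction via effective bi-interpretability with formulas not depending on $\mc{C},\mc{D}$. The ``moreover'' clause then follows by bookkeeping: only effective isomorphisms were introduced along the way, so the functors one recovers from this bi-interpretation are effectively naturally isomorphic to the original $F$ and $G$.

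I expect the main obstacle to be the converse, and within it the two facts imported from \cite{HTMelnikovMillerMontalban}: that every computable functor is effectively isomorphic to one induced by an effective interpretation, and the ``rigidity'' that an interpretation inducing a functor effectively isomorphic to the identity is already $\Delta^{\comp}_1$-definably isomorphic, as an interpretation, to the trivial one. Combining these while keeping all the uniformity — so that the bi-interpretation formulas genuinely do not depend on the choice of $\mc{C}$ and $\mc{D}$ — and checking the naturality diagrams is the technical core; the forward direction is a comparatively direct unwinding of definitions.
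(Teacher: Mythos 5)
The paper does not prove this theorem; it is cited as Theorem 1.12 of \cite{HTMelnikovMillerMontalban} and used as a black box, so there is no in-paper proof to compare against. As a stand-alone sketch, yours is a reasonable outline of the intended reduction: lift the structure-level correspondence between computable functors and effective interpretations (also a main theorem of \cite{HTMelnikovMillerMontalban}, summarized in the present paper just above Definition \ref{def:effeq}) to the class level. You correctly identify the load-bearing facts --- that this correspondence is uniform, compatible with composition, matches the identity functor to something $\Delta^{\comp}_1$-definably equivalent to the trivial self-interpretation, and the associated ``rigidity'' that a self-interpretation inducing a functor effectively isomorphic to the identity is itself $\Delta^{\comp}_1$-definably trivializable. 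Be aware, though, that these lemmas are not background facts to be imported casually: they carry essentially the full technical content of the theorem, and \cite{HTMelnikovMillerMontalban} prove them precisely in order to deduce Theorem 1.12, so your proposal is best read not as an independent proof but as a faithful reconstruction of the intended argument that defers its nontrivial steps back to the source. One place that deserves more care in the converse direction: the functor-to-interpretation passage recovers a functor only up to effective natural isomorphism, so the ``bookkeeping'' for the ``moreover'' clause has to track exactly how these isomorphisms compose and interact with the natural transformations $\Lambda$ witnessing $F\circ G\cong\id$ and $G\circ F\cong\id$; that is where the actual work is.
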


\subsection{Universality}

\begin{definition}
A class $\mf{C}$ of structures is \emph{universal} if for each language $\mc{L}$, the class of $\mc{L}$-structures is reducible to $\mf{C}$ via effective bi-interpretability.
\end{definition}

Equivalently, by Theorem \ref{thm:equiv-reducibilities}, a class $\mf{C}$ is universal if and only if for each language $\mc{L}$, the class of $\mc{L}$-structures is reducible to $\mc{F}$ via effective adjoint equivalence.

It follows from \cite{HKSS,MPSS} that each of the following classes is universal: undirected graphs, partial orderings, lattices, and fields, and, after naming finitely many constants, integral domains, commutative semigroups, and 2-step nilpotent groups.

Each of the following classes is not universal: algebraically closed fields, real closed fields, abelian groups, linear orderings, and Boolean algebras. In each case, the computable dimension can only be $1$ or $\omega$.

To define universality among finitely generated structures, we want to restrict to the class of finitely generated $\mc{L}$-structures.

\begin{definition}\label{def:universal}
Let $\mf{C}$ be a class of finitely-generated structures. $\mf{C}$ is \emph{universal among finitely generated structures} if for each language $\mc{L}$, the class of finitely generated $\mc{L}$-structures is reducible to $\mf{C}$ via effective bi-interpretability.
\end{definition}

\section{Groups are Universal for Finitely Generated Structures}\label{sec:groups}

In this section we will prove Theorem \ref{thm:main} in the following strengthened form:

\begin{theorem}\label{thm:main-extra}
Fix a language $\mc{L}$. For each $\mc{L}$-structure $\mc{A}$, we can effectively build a group $G(\mc{A})$ with elements $b,c,d,f_1,f_2 \in G(\mc{A}$) such that $\mc{A}$ is effectively bi-interpretable with $\tilde{G}(\mc{A}) = (G(\mc{A}),b,c,d,f_1,f_2)$. Moreover:
\begin{enumerate}
	\item The formulas of the bi-interpretation do not depend on $\mc{A}$.
	\item The orbit of the tuple $bcdf_1f_2 \in G(\mc{A})$ is definable by a finitary quantifier-free formula.
	\item $\mc{A}$ is finitely generated if and only if $G(\mc{A})$ is.
\end{enumerate}
\end{theorem}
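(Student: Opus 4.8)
\noindent\emph{Proof plan.} The plan is to realize $G(\mc{A})$ as a small-cancellation quotient of a free product, with a fixed five-element ``scaffold'' $b,c,d,f_1,f_2$ together with one further infinite cyclic free factor for every element of $\mc{A}$, and relators that transcribe the atomic diagram of $\mc{A}$. I will describe the construction for a language with one binary function symbol $\ast$ and one unary predicate $U$; a general language is handled the same way, with one family of relators for each function symbol and each relation symbol. Enumerate the universe of $\mc{A}$ as $a_0,a_1,\dots$, and let $G(\mc{A})$ be the quotient of the free product $F(b,c,d,f_1,f_2)\ast\langle x_0\rangle\ast\langle x_1\rangle\ast\cdots$ by the normal closure of the set $\mc{R}$ of relators consisting of a \emph{function relator} $x_k^{-1}\,w(x_i,x_j)$ for each (necessarily unique) true instance $a_i\ast^{\mc{A}}a_j=a_k$, and a \emph{predicate relator} $f_1\,w'(x_i)\,f_2$ for each $i$ with $U^{\mc{A}}(a_i)$; here $w(v,u)$ and $w'(v)$ are two fixed words, independent of $i,j,k$, in the indicated free factors together with powers of $b,c,d$. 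The map $\mc{A}\mapsto G(\mc{A})$ is effective and uniform.

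The words $w,w'$ must be designed so that the whole family of \emph{candidate} relators --- those listed above together with the ones that \emph{would} be listed for atomic facts that fail --- satisfies $C'(1/6)$ over the free product. To arrange this I would make the bodies of $w,w'$ long, with the variable slots alternating quickly (so no long run avoids an $x$-slot), with all occurring powers of $c$ (respectively $b$) distinct (so that each candidate relator is aperiodic), and interleaved with the scaffold letters; then two distinct candidate relators share only a bounded piece, because their free-factor syllables $x_i,x_j,\dots$ already differ as soon as the coded elements differ, while the scaffold syllables are pinned down by position. Since $C'(1/6)$ holds, Dehn's algorithm decides the word problem of $G(\mc{A})$ relative to $\mc{R}$, and one gets the key fact: a candidate relator is trivial in $G(\mc{A})$ if and only if it lies in $\mc{R}$, i.e. if and only if the corresponding atomic fact holds in $\mc{A}$. (This last equivalence needs a one-line argument for a false function fact $a_i\ast^{\mc{A}}a_j=a_k$: since $\ast$ is total, the true instance $a_i\ast^{\mc{A}}a_j=a_{k'}$ is listed, so $w(x_i,x_j)=x_{k'}$ in $G(\mc{A})$, and hence the candidate relator reduces to $x_k^{-1}x_{k'}$, which is nontrivial because distinct free generators stay distinct in $G(\mc{A})$ by Greendlinger's lemma.)

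For the interpretation of $\mc{A}$ in $\tilde{G}(\mc{A})$ I would take the coded universe to be the set of free generators $x_i$, with $\sim$ equality; one checks that this set is $\Delta^{\comp}_1$-definable in $\tilde{G}(\mc{A})$ by an analysis of normal forms in the small-cancellation group, with the index $i$ recoverable as part of the same analysis. The graph of $\ast$ is then defined by the single quantifier-free formula $z^{-1}w(v_1,v_2)=1$ (the named constants $b,c,d$ occurring inside $w$) and $U$ by $f_1w'(v)f_2=1$; by the previous paragraph these define exactly the atomic facts of $\mc{A}$, and the formulas are independent of $\mc{A}$, which gives clause~(1). For the reverse interpretation, inside $\mc{A}$ one represents elements of $G(\mc{A})$ by reduced words over $b^{\pm},c^{\pm},d^{\pm},f_1^{\pm},f_2^{\pm},x_0^{\pm},x_1^{\pm},\dots$; since $\mc{R}$ is read off the atomic diagram of $\mc{A}$ by a fixed computable rule, equality and inequality of two such words --- both decided by Dehn's algorithm relative to $\mc{R}$ --- are $\Sigma^{\comp}_1$ over $\mc{A}$, hence $\Delta^{\comp}_1$, and the group operations on words are computable, giving an effective interpretation of $\tilde{G}(\mc{A})$ in $\mc{A}$ with $b,c,d,f_1,f_2$ the obvious one-letter words. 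Finally one verifies the two composition conditions of Definition~\ref{defn:eff-biinterpretable}: the round trip $\mc{A}\to\tilde{G}(\mc{A})\to\mc{A}$ sends $a_i$ to $x_i$ and back to $a_i$, and the round trip $\tilde{G}(\mc{A})\to\mc{A}\to\tilde{G}(\mc{A})$ tracks a word through its $\mc{A}$-code and back; in each case the identification with the identity is $\Delta^{\comp}_1$ and independent of $\mc{A}$.

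It remains to handle clauses (2) and (3) and to name the hard part. For (3): if $\mc{A}=\langle a_{i_1},\dots,a_{i_m}\rangle$, then writing each element as a term in the generators and iterating the function relators $x_k^{-1}w(x_i,x_j)$ expresses every $x_k$ as a word in $x_{i_1},\dots,x_{i_m},b,c,d,f_1,f_2$, so $G(\mc{A})$ is finitely generated; conversely, if $G(\mc{A})$ is finitely generated then it equals $\langle\{x_i:i\in S\}\cup\{b,c,d,f_1,f_2\}\rangle$ for a finite $S$, and a van Kampen diagram analysis over $\mc{R}$ --- using that the only relators meeting the $x_i$'s are the function and predicate relators --- shows that $\{a_i:i\in S\}$ generates $\mc{A}$. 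For (2): the relators are attached to the scaffold asymmetrically --- $f_1,f_2$ occur only as the first and last syllables of predicate relators and $b,c,d$ only in fixed interior roles --- so one can write down a finitary quantifier-free formula $\Theta(y_1,\dots,y_5)$, possibly after adjoining a few fixed relators among $b,c,d,f_1,f_2$ themselves, whose solutions in $G(\mc{A})$ are exactly the images of $(b,c,d,f_1,f_2)$ under $\operatorname{Aut}(G(\mc{A}))$; intuitively $\Theta$ pins down the scaffold's internal relations and its interface with the coded part tightly enough that only an automorphism's worth of freedom remains. The main obstacle throughout is the \emph{simultaneous} design of $w$, $w'$, and the scaffold so that all of the following hold at once: the candidate relators are $C'(1/6)$, so that triviality detects atomic facts; the function relators express one free generator through the others, so that clause~(3) holds; the scaffold is rigid enough for clause~(2); the coded universe is $\Delta^{\comp}_1$-definable; and the whole recipe is uniform in $\mc{A}$. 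Once suitable $w,w'$ are fixed, Greendlinger's lemma and van Kampen diagrams reduce each remaining claim to a routine, if laborious, verification.
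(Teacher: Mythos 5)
Your proposal is in the same broad spirit as the paper's construction: a small-cancellation group with a fixed scaffold $b,c,d,f_1,f_2$, one generator per element of $\mc{A}$, relators transcribing the atomic diagram, and Dehn's algorithm to decide the word problem uniformly. The existence of an effective bi-interpretation (clause 1), the word-problem computability, and the direction of (3) saying $\mc{A}$ finitely generated implies $G(\mc{A})$ finitely generated, all go through essentially as you sketch them. But there is a real gap in clause (2), and it is not just a matter of ``routine, if laborious, verification.''

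In a $C'(1/6)$ group built from free generators, a finitary quantifier-free formula $\Theta(y_1,\dots,y_5)$ is a Boolean combination of word equations, and the positive ones carve out sets that are far too large: a solution to $w(\bar y)=1$ can be built by conjugation, and a generic group element satisfies a generic inequation. Your plan to ``pin down the scaffold's internal relations and its interface with the coded part'' by placing $f_1,f_2$ asymmetrically in the relators gives no quantifier-free handle at all, because $f_1,f_2$ appear in the \emph{relators}, not in any finitary atomic fact about a candidate $5$-tuple. The paper's device here is to make $f_1$ and $f_2$ \emph{torsion}, of distinct large prime orders $p_1,p_2$: then $t^{p_i}=e \wedge t\neq e$ is a genuine finitary quantifier-free condition, and the Torsion Lemma forces $t$ to be a conjugate of a power of $f_i$. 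Starting from that narrow set of candidates, a second family of commutator-like words $v,u_b,u_c,u_d$ between the scaffold elements and between each $a\in A$ and $c,d$ (Lemmas~\ref{lem:uu} and~\ref{lem:vv} of the paper) lets one eliminate the remaining ambiguity, which is where the detailed Greendlinger case analysis lives. Without something playing the role of torsion, it is unclear how to get clause (2) started, and this is the one place where ``adjoining a few fixed relators among $b,c,d,f_1,f_2$'' is doing all the work while being unspecified. Relatedly, your claim that the set of $x_i$'s is $\Delta^{\comp}_1$-definable ``by an analysis of normal forms'' skips the actual content: one needs a concrete, $\mc{A}$-independent formula picking out exactly $\{x_i\}$, and the paper's $u_A(x,c)=u_A(x,d)=e$ requires a nontrivial argument (Lemma~\ref{lem:uu}) that no other element satisfies it. These two sub-arguments are the technical core of the theorem, and your proposal names them as obstacles but does not resolve them.
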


\subsection{Naming Constants}\label{sec:naming-constants}

We will give a brief argument that the class of finitely generated groups is not universal among finitely generated structures, and hence we must name constants.

We will use the fact (see \cite{HTMillerMontalban}) that two structures which are bi-interpretable have the same automorphism group. Every group $G$, $|G| > 2$ has a non-trivial automorphism. On the other hand, there are finitely generated structures which have no non-trivial automorphisms but are not computable. Fix one such structure $\mc{A}$. Since every finitely generated group with no non-trivial automorphisms is computable, no such group can be effectively bi-interpretable with $\mc{A}$. Thus the class of finitely generated groups is not universal. On the other hand, a finitely generated group can become rigid after fixing finitely many constants, for example, its generators.

\subsection{Small Cancellation}

We give a short summary of the definitions and facts we need from small cancellation theory. We refer the interested readers to \cite{LyndonSchupp}.

\begin{definition}
We say a presentation $\langle S \mid R \rangle$ is \emph{symmetrized} if every relation is cyclically reduced and the relation set $R$ is closed under inverse and cyclic permutation.

Let $\langle S \mid R \rangle$ be a symmetrized presentation. We say a word $u \in F(S)$ is a \emph{piece} if there are two $r_1 \neq r_2 \in R$ such that $u$ is an initial subword of both $r_1$ and $r_2$. We also say the presentation satisfies the \emph{$C'(\lambda)$ small cancellation hypothesis} if for every relation $r$ and every piece $u$ with $r = uv$, we have $|u| < \lambda |r|$.
\end{definition}

Furthermore, we shall say a non-symmetrized presentation satisfies the small cancellation hypothesis if it does once we replace the relation set with its symmetrized closure. We shall also say a group is a small cancellation group when it is clear which presentation we are using.

One key lemma we will need for small cancellation groups is the following, which says that every presentation of the trivial word must contain a long common subword with a relator.

\begin{lemma}[Greendlinger's Lemma]
Let $G = \langle S \mid R \rangle$ be a $C'(\lambda)$ small cancellation group with $0 \leq \lambda \leq \frac16$. Let $w$ be a non-trivial freely reduced word representing the trivial element of $G$. Then either $w\in R$, or there are two disjoint subwords $u_1$ and $u_2$ of $w$, such that each of them are a subword of some cyclic permutation $r_i$ of a relation in $R$ or its inverse with $r_i = u_iv_i$ and $|u_i| > (1-3\lambda)|r_i|$.
\end{lemma}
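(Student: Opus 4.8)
The plan is to argue via van Kampen diagrams, following the classical approach. Since $w$ is freely reduced, non-trivial, and equals $1$ in $G$, there is a van Kampen diagram $D$ over the symmetrized presentation $\langle S \mid R\rangle$ whose boundary cycle is labeled by $w$. Among all such diagrams, fix one that is \emph{reduced} (no pair of distinct faces sharing an edge carries mutually inverse boundary labels) and, subject to that, has the fewest faces and then the fewest edges. If $D$ had no faces, $w$ would be trivial in $F(S)$, contradicting that it is freely reduced and non-trivial. If $D$ has exactly one face, then $w$ is a cyclic permutation of an element of $R^{\pm 1}$; since $R$ is closed under cyclic permutation and inversion, $w \in R$, and we are done.

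So assume $D$ has at least two faces. The key structural observation is that, because $D$ is reduced, every \emph{interior} edge of $D$ is labeled by a \emph{piece}: such an edge is a common subword of the (distinct, by reducedness) relators bounding the two faces on either side. Consequently, if a face $D_i$ carrying relator $r_i$ had all of its edges interior, then $|r_i|$ would be the sum of the lengths of its edges, each of which is less than $\lambda |r_i|$, so $D_i$ would have more than $1/\lambda \geq 6$ sides. Together with the fact that every interior vertex of $D$ has valence at least $3$, this places $D$ in the class of $(3,7)$-maps, for which an Euler-characteristic computation ($V - E + F = 1$, with all interior curvature non-positive) forces the ``positive curvature'' to be concentrated on $\partial D$.

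From this one extracts the combinatorial heart of the argument: there exist (at least) two distinct faces $D_1, D_2$ of $D$, each meeting $\partial D$ in a single edge-path, such that $D_i$ has at most three edges \emph{not} lying on $\partial D$. Writing the boundary cycle of $D_i$ as $r_i = u_i v_i$ with $u_i$ the portion read along $\partial D$, the complementary part $v_i$ is a concatenation of at most three pieces, hence $|v_i| < 3\lambda |r_i|$ and $|u_i| > (1 - 3\lambda)|r_i|$. Each $u_i$ is by construction a subword of a cyclic permutation of $r_i$ or its inverse, and — since it is read off a connected arc of $\partial D$ — a subword of $w$; choosing $D_1$ and $D_2$ as two ``extremal'' such faces makes the boundary arcs, hence the subwords $u_1, u_2$, disjoint.

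The main obstacle is the combinatorial lemma invoked in the previous paragraph: the Euler-characteristic argument showing that a reduced, simply connected $(3,7)$-map with more than one face must contain at least \emph{two} faces that lie almost entirely on the boundary, together with arranging that the corresponding boundary arcs be disjoint. This is precisely where the hypothesis $\lambda \leq \tfrac16$ is indispensable — it is what guarantees that interior faces have at least $7$ sides, which with valence $\geq 3$ at interior vertices yields the relevant curvature inequality. A secondary technical point is that $\partial D$ need not be an embedded circle, so some care is needed to identify the arcs along $\partial D$ with genuine disjoint subwords of the cyclic word $w$; this is handled using the minimality of $D$.
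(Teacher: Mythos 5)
The paper does not prove this lemma: Greendlinger's Lemma is stated as background material, with a pointer to Lyndon and Schupp, so there is no in-paper proof to compare your argument against. Your proposal is a correct outline of the standard van Kampen diagram proof found there (and in most modern treatments): pass to a reduced diagram $D$ with boundary label $w$, observe that each interior edge carries a piece, deduce from $C'(\lambda)$ with $\lambda\leq\frac16$ that interior faces have degree at least $7$ so that $D$ is a $[3,7]$-map, and apply the combinatorial Gauss--Bonnet (curvature) count to force at least two boundary faces each meeting $\partial D$ in a single arc and having at most three interior edges; reading the boundary arcs of those two faces gives the disjoint subwords $u_1,u_2$ with $|v_i|<3\lambda|r_i|$ and hence $|u_i|>(1-3\lambda)|r_i|$. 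You are right that the real work is hidden in the curvature computation --- that step is exactly the structure theorem for simply connected $[3,7]$-maps (Lyndon--Schupp, Chapter V), and your sketch correctly identifies it as the load-bearing lemma rather than trying to hand-wave past it. Two minor points worth tightening in a full write-up: you should reduce to $w$ cyclically reduced (otherwise $\partial D$ has spurs and the ``single face'' case is not quite as you state it), and the disjointness of $u_1$ and $u_2$ comes for free from edge-disjointness of the two boundary arcs rather than from any special ``extremal'' choice, though one does need the structure lemma to produce two \emph{distinct} such faces.
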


Another lemma we will need is the Torsion Lemma, which says any torsion has to come from a relator.

\begin{lemma}[Torsion Lemma]
Let $G = \langle S \mid R \rangle$ be a $C'(\lambda)$ small cancellation group with $0 \leq \lambda \leq \frac16$. Let $g\in G$ be an element with finite order. Then there is a $r\in R$ such that $r = v^n$ and $g$ is conjugate to a power of $v$ in $G$.
\end{lemma}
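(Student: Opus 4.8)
The plan is to conjugate $g$ into cyclically reduced form, pass to its primitive root, and apply Greendlinger's Lemma to a power. We may assume $g \neq 1$ (otherwise the statement is vacuous/trivial), so $g$ has some finite order $m \geq 2$. Since being a torsion element and the conclusion of the lemma are both invariant under conjugacy, I would first replace $g$ by a conjugate so that it is represented by a cyclically reduced word $w$, and then, after one further conjugation, arrange that $w = z^{a}$ where $z$ is cyclically reduced and is not a proper power (its cyclic primitive root). Because $w$ is cyclically reduced, the word $w^{m} = z^{am}$ is freely reduced; it is nontrivial since $g \neq 1$, and it represents the identity since $g^{m} = 1$.

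Now apply Greendlinger's Lemma to $w^{m}$. If $w^{m} \in R$, we are done immediately: take $v = z$ and $n = am$, so that $r := w^{m} = v^{n} \in R$ and $g$ is conjugate to $w = v^{a}$, a power of $v$. Otherwise Greendlinger's Lemma yields disjoint subwords $u_{1}, u_{2}$ of $w^{m}$, each a subword of a cyclic permutation $r_{i}$ of a relator in $R$ or its inverse, with $r_{i} = u_{i} v_{i}$ and $|u_{i}| > (1 - 3\lambda)|r_{i}| \geq \tfrac{1}{2}|r_{i}|$; cyclically permuting $w$ (which only replaces $g$ by a conjugate and $z$ by a rotation of itself), I may also assume $u_{1}$ is a prefix of $w^{m}$. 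The goal now is to show that $R$ contains a proper power of some rotation of $z$.

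The technical heart — and the step I expect to be the main obstacle — is the analysis of how a relator can sit inside the highly periodic word $z^{am}$. Since $u_{i}$ is a subword of $z^{am}$, once it is long enough it has period $|z|$, and the claim to establish is that a cyclically reduced relator containing a subword of length more than half its own length that is periodic with period $|z|$ must itself be periodic with period equal to a rotation of $z$, i.e.\ be a cyclic permutation of $z_{0}^{k}$ for some rotation $z_{0}$ of $z$. This is a Fine--Wilf-type periodicity statement, and it is exactly here that the $C'(1/6)$ hypothesis is essential: a relator that is only partially periodic would share a long initial segment with one of its own cyclic permutations, producing a piece of relative length at least $\tfrac16$ and contradicting $C'(1/6)$, while genuinely out-of-phase overlaps are excluded by the bound $|u_{i}| > \tfrac12|r_{i}|$; the remaining short-relator and short-overlap configurations are handled using the disjointness of $u_{1}$ and $u_{2}$ (and, if needed, a second application of Greendlinger's Lemma). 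Granting this, closure of $R$ under cyclic permutation gives a rotation $v$ of $z$ with $v^{k} \in R$ for some $k$; since $g$ is conjugate to $z^{a}$, hence to $v^{a}$, we obtain the conclusion with $r = v^{k}$ and $n = k$. Finally $k \geq 2$, since $k = 1$ would give $v \in R$, hence $v =_{G} 1$ and $g =_{G} 1$, contrary to assumption.
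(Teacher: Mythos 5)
The paper does not prove the Torsion Lemma: it appears in a background subsection that explicitly describes itself as a summary of facts from small cancellation theory, with a blanket citation to Lyndon and Schupp. So there is no in-paper proof to compare against; you are supplying a proof of a black-boxed prerequisite.

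Your outer skeleton (conjugate $g$ to a cyclically reduced word, extract its primitive root $z$, raise to the order $m$, apply Greendlinger's Lemma to $z^{am}$) is a reasonable way to approach the theorem, and the reductions you make before and after the central step are fine. But the argument has a real hole exactly where you flag ``the technical heart,'' and the heuristics you offer do not close it. The statement you need --- that a $C'(1/6)$ relator $r$ containing a subword $u$ with $|u|>\tfrac12|r|$ that is periodic with period $|z|$ must be a power of a rotation of $z$ --- is not an immediate consequence of the piece bound. Your shift argument (comparing $r$ with its cyclic shift by $|z|$) yields a piece of length roughly $|u|-|z|$, which contradicts $C'(\lambda)$ only when $|z|$ is small relative to $|r|$, roughly when $|z| < (\tfrac12-\lambda)|r|$. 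When $|z|$ is comparable to $|r|$, the subword $u$ may contain fewer than two full periods, the ``periodicity'' of $u$ carries almost no information, and this line of reasoning says nothing. One must also separately handle the case where $u_i$ lies in a cyclic permutation of an \emph{inverse} relator, the case where $r_i$ is short relative to $|z|$, and the alignment bookkeeping between $u_1$ and $u_2$. You gesture at all of these (``out-of-phase overlaps are excluded\ldots,'' ``handled using the disjointness\ldots,'' ``if needed, a second application of Greendlinger''), but none of them is actually carried out, and those are precisely the places where this kind of argument typically breaks. Having correctly identified the crux and then written ``Granting this\ldots,'' what you have is a plausible strategy rather than a proof; to finish, you would need to work out the periodicity cases in full, or fall back on the standard diagram-theoretic proof in Lyndon--Schupp.
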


We say that a word $w$ is \emph{Dehn-minimal} if it does not contain any subword $v$ that is also a subword of a relator $r = vu$ such that $|v| > |r| / 2$. Greendlinger's lemma implies that, given a $C'(1/6)$ presentation of a group, we can solve the word problem using the following observation: a Dehn-minimal word is equivalent to the identity if and only if it is the trivial word. Given a word $w$, we replace $w$ by equivalent words of shorter length until we have replaced $w$ by a Dehn-minimal word $w'$. Then $w$ is equivalent to the identity if and only if $w'$ is the trivial word. This is Dehn's algorithm.

\subsection{Construction}

Fix a computable language $\mc{L}$. View constants and $0$-ary functions as unary relations, so that we may assume that $\mc{L}$ consists entirely of function and relation symbols with arity $\geq 1$. Given an $\mc{L}$-structure $\mc{A}$, let $G = G(\mc{A})$ be the group with generators $\{ a \}_{a \in A} \cup \{ b, c, d, f_1, f_2 \}$. For elements $g\in G$, we will abuse notation and say $g \in A$ to mean $g$ is one of the generators in the set $\{ a \}_{a \in A}$. We also put the following relations on $G(\mc{A})$:
\begin{itemize}
	\item $f_i^{p_i} = e$, where $p_1,p_2$ are distinct primes greater than $10^{10}$,
	\item $v(f_1,f_2) = v(f_2,f_1) = e$,
	\item $u_b(b,f_1) = u_b(b,f_2) = e$, $u_c(c,f_1) = u_c(c,f_2) = e$, $u_d(d,f_1) = u_d(d,f_2) = e$,
	\item $u_A(a,c) = u_A(a,d) = e$ for $a \in A$,
	\item $w_m(\bar{a},b) = 
	\begin{cases}
a', & \text{if the }m\text{th symbol in }\mc{L}\text{ is a function symbol }f\text{ and }\mc{A} \models f(\bar{a}) = a'\\
e, & \text{if the }m\text{th symbol in }\mc{L}\text{ is a relation symbol }R\text{ and }\mc{A} \models R(\bar{a}),
	
	\end{cases}$ 
\end{itemize}
where
\begin{itemize}
	\item $u_A(x,y) = x y x y^{4} \cdots x y^{999^2} x y^{1000^2}$,
	\item $u_b(x,y) = x y^{1001^2} x y^{1002^2} \cdots x y^{1999^2} x y^{2000^2}$,
	\item $u_c(x,y) = x y^{2001^2} x y^{2002^2} \cdots x y^{2999^2} x y^{3000^2}$,
	\item $u_d(x,y) = x y^{3001^2} x y^{3002^2} \cdots x y^{3999^2} x y^{4000^2}$,
	\item $v(x,y) = x y^{4001^2} x y^{4002^2} \cdots x y^{4999^2} x y^{5000^2}$,
	\item with $n$ the arity of the $m$th symbol in $\mc{L}$, $m \geq 1$,
	\[ w_m(x_1,\ldots,x_n,y) = x_1 \cdots x_n y^{100m+1} x_1 \cdots x_n y^{100m + 2} \cdots x_1 \cdots x_n y^{100m + 100}. \]
\end{itemize}
Then $G$ is a $C'(1/20)$ small cancellation group. To see this, recall that
\[\sum_{i = 1}^n i^2 = \frac{(n)(n+1)(2n+1)}{6}.\]
So, for example, $|u_A(x,y)| = 333,834,500$, so
\[ \frac{|x y^{999^2} x y^{1000^2} x|}{|v(x,y)|} \leq \frac{6}{1000}.\]

The idea of the construction is that set $A \subseteq G(\mc{A})$ is definable from $c$ and $d$, and that the relations and functions on $A$ are definable from $b$ (Lemmas \ref{lem:uu} and \ref{lem:functor-works}). This uses the relators defined using $u_A$ and $w_m$.

We also need to show that the orbits of the elements $b,c,d,f_1,f_2$ are definable by a finitary quantifier-free formula. If we know $f_1$ and $f_2$, we can pick out the set $A$ and the elements $b,c,d$ using Lemma \ref{lem:uu}. To find $f_1$ and $f_2$ (up to a conjugate), we use Lemmas \ref{torsion-implies-f} and \ref{lem:vv}.

We will also need to show that the diagram (i.e., the word problem) of $G(\mc{A})$ is computable from the diagram of $\mc{A}$. This is Lemma \ref{lem:wp}.

For the rest of this section, we often abuse notation and identify an element in $G(\mc{A})$ with a spelling of it in the generating set. We will use usual equality $=$ to denote equality in $G(\mc{A})$, and $\equiv$ to denote equality as words, i.e.\ in the free group generated by the generators.

\begin{lemma}\label{lem:uu}
Fix an $\mc{L}$-structure $\mc{A}$. Then, in $G(\mc{A})$:
\begin{enumerate}
	\item If $u_A(x,c) = u_A(x,d) = e$ then $x \in A$.
	\item If $u_b(x,f_1) = u_b(x,f_2) = e$, but $x^{p_1} \neq e$ and $x^{p_2} \neq e$, then $x = b$.
	\item If $u_c(x,f_1) = u_c(x,f_2) = e$, but $x^{p_1} \neq e$ and $x^{p_2} \neq e$, then $x = c$.
	\item If $u_d(x,f_1) = u_d(x,f_2) = e$, but $x^{p_1} \neq e$ and $x^{p_2} \neq e$, then $x = d$.
\end{enumerate}
\end{lemma}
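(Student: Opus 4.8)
The plan is to prove all four parts by the same two-step argument, applying Greendlinger's Lemma to the $C'(1/20)$ presentation, so that any non-empty freely reduced word equal to $e$ in $G=G(\mc A)$ either is a cyclic conjugate of a relator$^{\pm1}$ or contains a subword that is more than $1-3\cdot\tfrac1{20}=\tfrac{17}{20}$ of a relator. I will use the following standard small-cancellation facts: distinct generators are distinct, non-trivial elements of $G$; and, by the Torsion Lemma, the only torsion elements of $G$ are conjugates of powers of $f_1$ or $f_2$, so in particular $b,c,d$ and each $a\in A$ have infinite order. I also use Dehn's algorithm to replace any word by a Dehn-reduced one (no subword longer than half a relator) representing the same element.

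For part (1), fix $x$ with $u_A(x,c)=u_A(x,d)=e$, let $\hat x$ be a Dehn-reduced word for $x$, and write $\hat x\equiv c^{-p}\hat x_0 c^{q}$ where $\hat x_0$ neither begins nor ends with $c^{\pm1}$. \emph{Step 1.} Freely reduce $W:=u_A(\hat x,c)$; its $c$-exponent sum is $1000(q-p)+\sum_{j=1}^{1000}j^2\neq 0$, so $W$ is non-empty, and $W=e$ in $G$. If $\hat x_0$ is empty then $W$ is a non-trivial power of $c$, impossible since $c$ has infinite order, so $\hat x_0$ is non-empty. Apply Greendlinger's Lemma. I claim that the only relator with which $W$ can share a subword of relative length $>\tfrac{17}{20}$, and the only relator $W$ itself can be a cyclic conjugate of, is $u_A(a,c)^{\pm1}$ for some $a$: each of $u_A(a,d)$, $u_b(b,f_i)$, $u_c(c,f_i)$, $u_d(d,f_i)$, $v(f_1,f_2)$, $v(f_2,f_1)$, $f_i^{p_i}$ involves a generator ($d$, $f_1$, or $f_2$) not occurring in the long-power part of $W$, or --- because the exponent ranges $[1,1000^2]$, $[1001^2,2000^2]$, $[2001^2,3000^2]$, $[3001^2,4000^2]$, $[4001^2,5000^2]$ are pairwise disjoint --- has consecutive-square exponent differences incompatible with those occurring in $W$; and a long overlap with a $w_m$- or $f_i^{p_i}$-type relator is ruled out because between the $\hat x_0$'s in $W$ one finds only powers of $c$, while within a single $\hat x_0$ Dehn-reducedness bounds any such overlap by half the relator. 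For the relator $u_A(a,c)$, rigidity of the square exponents forces any long overlap to be ``in phase'', hence $q-p=0$, while the single-letter separators of $u_A(a,c)$ force $\hat x_0$ to be a single generator, necessarily some $a\in A$. Thus $x=c^{k}a c^{-k}$ for some $a\in A$, $k\in\mathbb Z$.

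\emph{Step 2.} Now $V:=u_A(c^{k}a c^{-k},d)$ freely reduces to $c^{k}a c^{-k}d^{1}\,c^{k}a c^{-k}d^{4}\cdots c^{k}a c^{-k}d^{10^6}$, with no cancellation at the seams since $c$ and $d$ do not commute; it is non-empty ($d$-exponent sum $\sum j^2\neq0$) and equal to $e$ in $G$. If $k\neq 0$, Greendlinger gives a subword of $V$ of relative length $>\tfrac{17}{20}$ of some relator; but $V$'s syllables are $c^{\pm k},a,d^{j^2}$ repeating with period four, so such a subword cannot match $u_A(a',d)^{\pm1}$ (whose separators between $d$-powers are single letters, whereas those of $V$ have length $2|k|+1$), nor $u_A(a',c)^{\pm1}$ (the $c$-powers of $V$ are all $\pm k$, so cannot realize the distinct exponents $1,4,9,\dots$), nor any relator containing $b$, $f_1$, or $f_2$; and $V$ is not a cyclic conjugate of a relator for the same reasons --- contradiction. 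Hence $k=0$ and $x=a\in A$. Parts (2)--(4) follow the same pattern: the hypotheses $x^{p_1}\neq e$ and $x^{p_2}\neq e$ first rule out $x$ torsion (a torsion element, being a conjugate of a power of some $f_i$, satisfies $x^{p_i}=e$); Step 1 applied to $u_b(x,f_1)=e$ --- using that the relators containing $f_1$ have pairwise disjoint $f_1$-exponent ranges, and that a $W$ equal to a power of $f_1$ would force $x\in\langle f_1\rangle$, which is excluded --- gives $x=f_1^{k}b f_1^{-k}$; and Step 2 applied to $u_b(x,f_2)=e$ forces $k\equiv 0\pmod{p_1}$, hence $x=b$. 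The cases of $u_c$ and $u_d$ are identical, with $c$ or $d$ in place of $b$.

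The main obstacle is Step 1: proving that in the Greendlinger analysis of $u_A(\hat x,c)=e$ the only relator producing a long overlap is $u_A(a,c)^{\pm1}$, and that such an overlap is automatically ``in phase'' and hence pins $\hat x_0$ down to a single generator. This is exactly where the pairwise disjointness of the exponent ranges in the words $u_A,u_b,u_c,u_d,v,w_m$, and the strong threshold $\tfrac{17}{20}$ coming from $C'(1/20)$ (rather than $C'(1/6)$), are used; the remaining work --- ruling out every other relator type and handling the free cancellation at the seams of $W$ --- is routine syllable-counting and comparison of consecutive-square differences, together with the Dehn-reducedness of $\hat x$.
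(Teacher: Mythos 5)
Your overall strategy — Greendlinger plus a ``phase rigidity'' argument on the square exponents, run once with $c$ and once with $d$ — is the same as the paper's, but two steps you label ``routine'' are the ones that actually require care, and as written your argument has genuine gaps.

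First, in Step~1 you collapse the Greendlinger case analysis. The paper distinguishes four geometrically different ways the Greendlinger subword can sit inside $W$: entirely within one copy of $\hat x_0$ (ruled out by minimality of the spelling); containing at least two full copies of $\hat x_0$ (here one gets $\hat x_0$ as a piece, hence a single generator); containing exactly one full copy of $\hat x_0$; and straddling a single seam $\hat x_0\, c^{n}\,\hat x_0$ without containing any full copy of $\hat x_0$. The last case is not handled by disjointness of exponent ranges or by ``in-phaseness'' at all: the subword sees only one $c$-block, so no comparison of consecutive square differences is available. The paper needs a length budget ($|\hat x_0|<\tfrac12|r|$ from minimality, plus $\tfrac1{20}|r|$ bounds on $c$-blocks and on pieces) and, crucially for the fourth case, it invokes the \emph{second} Greendlinger subword $v_2$ and compares the two. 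Your proposal never uses the second subword, and your bound ``Dehn-reducedness bounds any such overlap by half the relator'' only controls the piece of the overlap inside a single $\hat x_0$, not the full straddling subword. There is also a small error in your nonemptiness argument: the $c$-exponent sum of $W$ is not $1000(q-p)+\sum j^2$ unless $\hat x_0$ happens to have zero $c$-exponent sum, since $\hat x_0$ may contain interior $c$'s.

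Second, and more seriously, your treatment of parts (2)--(4) is wrong as stated. You claim that ``the relators containing $f_1$ have pairwise disjoint $f_1$-exponent ranges'' lets Step~1 applied to $u_b(x,f_1)=e$ conclude directly that $x=f_1^{k}bf_1^{-k}$. But the exponents of $f_1$ in $W$ are the $(1000+n)^2+(q-p)$ reduced mod $p_1$, and a Greendlinger subword in the ``two copies of $\hat x_0$'' case may contain only a single $f_1$-block; a single exponent match carries no phase information, so it can just as well land in the exponent range of $u_c(c,f_1)$, $u_d(d,f_1)$, or $v(f_2,f_1)$ (note $p_1$ can be arbitrarily large, so $|n_i|$ is not a priori small relative to $|r|$). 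This is exactly why the paper's case~(b) for part~(2) concludes only $x\equiv f_1^{k}yf_1^{\ell}$ with $y\in\{f_2,b,c,d\}$, then intersects with the $u_b(x,f_2)=e$ analysis to get $x\in\{f_1f_2,\,f_2f_1,\,b,\,c,\,d\}$, and finally needs a \emph{third} Greendlinger application on $u_b(x,f_1)=e$ to rule out $f_1f_2$ and $f_2f_1$. Your proposal skips straight to $x=b$, and that step does not follow from the ingredients you cite.
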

\begin{proof}
We begin by showing (1). (2), (3), and (4) are similar, but the difference is that (2), (3), and (4) involve $f_1$ and $f_2$, which are elements of finite order. After proving (1), we will describe how to prove (2), highlighting the differences. (3) and (4) are proved in the same way as (2).

Let $x\in G$ be such that $u_A(x,c) = e$. Fix a shortest spelling of $x$ in the generating set $\{ a \}_{a \in A} \cup \{ b, c, d,  f_1, f_2 \}$. By abusing notation, we will write $x$ to mean this fixed spelling of $x$. Write $x \equiv c^k x' c^\ell$ where $k,\ell \in \mathbb{Z}$ and $x'$ begins and ends with a letter that is not $c$. Note that $x'$ does not reduce to the trivial word (if it did, we would have $c^n = e$ for some $n$, which cannot happen by the Torsion lemma).

Then we have
\[ e = u_A(x,c) = x c x c^{4} x c^{9} x c^{16} \cdots x c^{1000^2} = c^k x' c^{1 + k + \ell} x' c^{4 + k + \ell} x' \cdots c^{999^2 + k + \ell} x' c^{1000^2 + \ell} . \]
So
\[x' c^{1 + k + \ell} x' c^{4 + k + \ell} x' \cdots c^{999^2 + k + \ell} x' c^{1000^2 + \ell + k} = e.\tag{$*$}\]
Note that $n^2 + k + \ell$ is $-1$, $0$, or $1$ for at most one value of $n$. There is no cancellation in ($*$) except that $x' x'$ might appear in one place, and $x' x'$ cannot be freely reduced to the trivial word unless $x'$ is already the trivial word. Writing a reduced word in place of $x' x'$, by Greendlinger's Lemma, either the left hand side of ($*$) is in $R$ or there are two subwords satisfying the conclusion of Greendlinger's lemma. If the left hand side of ($*$) is in $R$, then it must be one of the relators $u_A(a,c)$ or its inverse. Thus $x' \equiv a$ and $x \equiv c^k a c^\ell$.

We will argue that $x \equiv c^k a c^\ell$ in the other case as well. There are two subwords $v_1$ and $v_2$ of the left hand side of ($*$) satisfying the conclusion of Greendlinger's Lemma. Since there is at most one $n$ with $n^2 + k + \ell$ equal to $-1$, $0$, or $1$, there is at most one instance of $x'x'$, $x'cx'$, or $x' c^{-1} x'$ as a subword of
\[ x' c^{1 + k + \ell} x' c^{4 + k + \ell} x' \cdots c^{999^2 + k + \ell} x' c^{1000^2 + \ell + k}.\]
We may choose whichever of $v_1$ or $v_2$ (say, without loss of generality, $v \equiv v_1$) which does not intersect the middle of any of these subwords (i.e., it does not involve the free cancellation in $x'x'$, or contain the $c$ in $x'cx'$, or the $c^{-1}$ in $x' c^{-1} x'$). Let $r$ be the relator associated with $v$.
We have a number of possibilities:
\begin{enumerate}[label = (\alph*)]
	\item $v$ is contained in $c^i x' c^j$, where $i$ and $j$ are each $0$, $1$, or $-1$. Then $x'$ is not a shortest spelling, a contradiction.
	\item $v$ fully contains at least two copies of $x'$. Then $v$ also contains $c^2$ or $c^{-2}$, and $x'$ contains a letter which is not $c$, so $r$ must be a cyclic permutation of $u_A(a,c)$ or its inverse for some $a \in A$. Since $x'$ appears twice in $r$, and each cyclic permutation of $r$ is distinct from each other cyclic permutation of $r$, $x'$ is a piece. But each piece appearing in $u_A(a,c)$ contains at most one letter which is not $c$. So $x' \equiv a$, and $x \equiv c^k a c^\ell$.
	\item $v$ fully contains exactly one copy of $x'$. So $v$ is a subword of $x' c^{j^2 + \ell + k} x' c^{(j+1)^2 + \ell + k} x'$ for some $j$. Note that $x'$ cannot contain more than half of $r$, or we would have a shorter spelling of $x$, and so $v$ must contain $c^2$ or $c^{-2}$. As $x'$ contains a letter which is not $c$, $r$ must be a cyclic permutation of $u_A(a,c)$ or its inverse for some $a \in A$.  Also, if $v$ contains the initial or terminal segment of another copy of $x'$, then that segment is a piece and hence has length at most $\frac{1}{20}|r|$. Also, if $c^n$ is a subword of $r$, then $n \leq \frac{1}{20}|r|$. Lastly, $x$ cannot be longer than half of the length of $r$, otherwise it is not a shortest spelling. So $|v| \leq \frac{1}{2}|r| + \frac{4}{20}|r| < \frac{17}{20}|r|$, a contradiction. So this case cannot happen.
	\item None of the above. $v$ is a subword of $x' c^n x'$ for some $n \neq -1,0,1$. $v$ must contain $c^2$ or $c^{-2}$ as a subword and thus $r$ is a cyclic permutation of $u_A(a,c)$ or its inverse for some $a \in A$. Since $x'$ cannot contain more than half of $r$, we can write $v \equiv x_1 c^n x_2$ where $x_1$ is a final segment of $x'$ and $x_2$ is an initial segment of $x'$. We have $|c^n| \leq \frac{1}{20}|r|$ and $|x_1|,|x_2| < \frac12|r|$, so $|x_1|,|x_2| \geq \frac{6}{20}|r|$. Note that in the relator $u_A(a,c) = e$, for any two subwords $y_1$ and $y_2$, there is no cancellation in $y_1y_2$. Since $x_1$ and $x_2$ are subwords of $r$, there cannot be any cancellation in $x_1x_2$, and thus also no cancellation in $x' x'$.

Recall that there was another subword $v_2$ of a relator $r_2$ which we obtained from Greendlinger's lemma. Since $x'x'$ is freely reduced, we may run the above analysis on $v_2$. If $v_2$ is in case (a), then the same arguments work. If $v_2$ is in case (b) or (c), then it contains a copy of $x'$, thus a copy of $x_1$, making it a piece with length $>\frac6{20}|r|$, a contradiction. If $v_2$ is in case (d), then it is a subword of $x' c^m x'$ for some $m \neq n$. As before, we can write $v_2 \equiv x_3 c^m x_4$ with $|x_3|,|x_4| \ge \frac6{20}|r_2|$. Then either $x_2$ is an initial segment of $x_4$ or vice versa; whichever is an initial segment is a piece, yielding a contradiction.
\end{enumerate}

So we have shown that $x$ is of the form $x \equiv c^k a c^\ell$ for some $a \in A$. Now applying the same argument to $u_A(x,d) = e$, we get that $x$ is also of the form $x \equiv d^{k'} a d^{\ell'}$. So $k = \ell = k' = \ell' = 0$ and $x \equiv a$ for some $a\in A$.


\medskip{}

Now we must talk about how to prove (2). Let $x\in G$ be such that $u_b(x,f_1) = e$ and $x^{p_1}\neq e$. Fix a shortest spelling of $x$ in the generating set $\{ a \}_{a \in A} \cup \{ b, c, d,  f_1, f_2 \}$. By abusing notation, we will write $x$ to mean this fixed spelling of $x$. Write $x \equiv f_1^k x' f_1^\ell$ where $k,\ell \in \mathbb{Z}$ and $x'$ begins and ends with a letter that is not $f_1$. Note that since $x^{p_1}\neq e$, $x'$ is not the trivial word.

Then we have
\begin{align*}
e &= u_b(x,f_1) = x f_1^{1001} x f_1^{1002^2} x f_1^{1003^2} x f_1^{1004^2} \cdots x f_1^{2000^2}\\ &= f_1^k x' f_1^{1001^2 + k + \ell} x' f_1^{1002^2 + k + \ell} x' \cdots f_1^{1999^2 + k + \ell} x' f_1^{2000^2 + \ell}.\end{align*}
Let $n_i$ be such that $n_i \equiv (1000+i)^2 + k + \ell \pmod{p_1}$, $|n_i| < p_1 / 2$. Then
\[x' f_1^{n_1} x' f_1^{n_2} x' \cdots f_1^{n_{999}} x' f_1^{n_{1000}} = e.\tag{$*$}\]
Note that $n_i$ is equal to $-1$, $0$, or $1$ for at most one value of $i$. Since each $n_i$ satisfies $|n_i| < p_1/2$, no large subword of a relator $f_1^{p_1} = e$ is a subword of the left hand side of ($*$). Using this fact, we may run the same argument as in (1). In cases (a), (c), and (d) we obtain a contradiction as before. However, in case (b), the relator may be of the form $v(f_2,f_1) = e$, $u_b(b,f_1)=e$, $u_c(c,f_1)=e$, or $u_d(d,f_1)=e$, and so we obtain $x \equiv f_1^k y f_1^\ell$ where $y$ is one of $f_2$, $b$, $c$, or $d$. After running similar argument on $u_b(x,f_2) = e$, we get $x \equiv f_2^{k'} y' f_2^{\ell'}$ where $y$ is one of $f_1$, $b$, $c$, or $d$. So the only possibilities are that $x \equiv f_2 f_1$, $x \equiv f_1 f_2$, $x \equiv b$, $x \equiv c$, or $x \equiv d$. Now by applying Greendlinger's lemma on $u_b(x,f_1) = e$, we get that $x \equiv b$.
\end{proof}

\begin{lemma}\label{lem:functor-works}
Fix an $\mc{L}$-structure $\mc{A}$. Then, in $G(\mc{A})$:
\begin{enumerate}
\item If the $n$th symbol in $\mc{L}$ is a function symbol $f$ and $w_n(\bar{x},b) = y$ with $\bar{x}, y \in A$, then $\mc{A} \models f(\bar{x}) = y$.
\item If the $n$th symbol in $\mc{L}$ is a relation symbol $R$ and $w_n(\bar{x},b) = e$ with $\bar{x}\in A$, then $\mc{A} \models R(\bar{x})$.
\end{enumerate}
\end{lemma}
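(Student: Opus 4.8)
The plan is to run a small cancellation argument in the style of Lemma \ref{lem:uu}, but applied directly to an explicit trivial word rather than to an unknown shortest spelling. I will prove (1); part (2) is the same argument with a slightly simpler word. Suppose the $n$th symbol of $\mc{L}$ is a function symbol $f$, that $\bar x, y$ are generators in $A$, and that $w_n(\bar x, b) = y$ in $G(\mc{A})$, so that $W \equiv w_n(\bar x, b)\, y^{-1}$ represents the identity. First I would observe that $W$ is freely reduced (the only candidate cancellation, $b^{100n+100}y^{-1}$, does not occur since $y \neq b$), and then cyclically reduce it if necessary; this is needed only in the edge case where $y$ equals an endpoint letter of the block $x_1 \cdots x_{a(n)}$, and it removes at most two letters. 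Write $W'$ for the resulting cyclically reduced word, so $W' = e$ in $G$ and $|W'| \geq |w_n(\bar x, b)| - O(1)$.

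Next I would apply Greendlinger's Lemma to $W'$ and rule out the alternative in which it yields two disjoint subwords, each of relative length exceeding $1 - 3\lambda = 17/20$ of a relator. All letters occurring in $W'$ lie in $\{a\}_{a \in A} \cup \{b\}$, and among all the relators only the $w_m$-relators admit a subword of relative length $> 17/20$ built from just these letters, since the relators using $c, d, f_1, f_2$ are spelled out of those generators on more than a $3/20$-fraction of their length. Moreover, the maximal $b$-runs of $W'$ have exponents in $[100n+1, 100n+100]$, whereas the maximal $b$-runs of $w_m$ have exponents in $[100m+1, 100m+100]$; since consecutive $b$-runs of $w_m$ are separated by a full $a$-block, the connected complement of a $17/20$-subword can entirely omit only boundedly many of the hundred $b$-runs, so matching these exponents forces $m = n$. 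But a cyclic permutation of a $w_n$-relator has length at least $|w_n(\bar x, b)| - O(1) \geq |W'| - O(1)$, so two disjoint subwords each of relative length $> 17/20$ would have combined length exceeding $|W'|$, a contradiction. Hence $W' \in R$: it is a cyclic permutation of a relator or its inverse.

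Finally I would identify that relator. The same $b$-power fingerprint (together with the presence of positive occurrences of $b$ in $W'$, which excludes the inverses of the positively-written relators) forces $W'$ to be a cyclic permutation of $w_n(\bar a, b)(a')^{-1}$ for some $\bar a, a'$ with $\mc{A} \models f(\bar a) = a'$, since these are the only $w_n$-relators in $R$ and each was included exactly because that relation was imposed in the construction. To read off $\bar a = \bar x$ and $a' = y$: the maximal $b$-runs $b^{100n+1}, \ldots, b^{100n+100}$ occur in increasing cyclic order in both words, which aligns them; comparing the $a$-blocks between consecutive $b$-runs gives $x_i = a_i$ for every $i$, and comparing the single anomalous $a$-block adjacent to $b^{100n+100}$, where the trailing $(a')^{-1}$ sits (after any free cancellation), gives $y = a'$. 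Therefore $\mc{A} \models f(\bar x) = y$. Part (2) is identical, taking $W \equiv w_n(\bar x, b)$ and matching against the relators $w_n(\bar a, b)$. The step I expect to be the main obstacle is exactly this bookkeeping — cyclic permutations, inverses, and the cancellation that occurs when $y$ coincides with an endpoint of the block $x_1 \cdots x_{a(n)}$ — where one must check that the cyclically reduced word still retains enough of the $b$-power fingerprint to recover $n$, the tuple $\bar x$, and the value $y$ unambiguously; this works because only a single $a$-block is ever disturbed.
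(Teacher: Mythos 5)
Your proof is correct, and it rests on the same two ideas as the paper's: apply Greendlinger's Lemma to $w_n(\bar x, b)\,y^{-1}$, observe that any large subword avoids $c,d,f_1,f_2$ and hence must sit inside a $w_m$-relator, and then use the exponent fingerprint of the maximal $b$-runs, $100m+1,\dots,100m+100$, to force $m=n$, $\bar a\equiv \bar x$, and $a'=y$. Where you differ is that you aim for the stronger conclusion that the cyclically reduced word $W'$ is itself a cyclic permutation of a relator; this requires ruling out the two-disjoint-subwords branch of Greendlinger by a length count and handling the cyclic-reduction edge case where $y$ coincides with $x_1$. The paper skips both: it takes a single large subword $u$ (which exists in either branch of Greendlinger), notes that $u$, being more than $17/20$ of a $w_m$-relator, already contains a segment of the form $a_1\cdots a_k b^{100m+i} a_1\cdots a_k b^{100m+i+1}$, and reads $m$ and $\bar a$ off that segment directly against $w_n(\bar x,b)y^{-1}$. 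So your extra work is sound but unnecessary; the local reading off one subword is shorter and avoids the edge-case bookkeeping you flag at the end.
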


\begin{proof}
We show this only for (1). The proof of (2) is similar. 

Suppose $w_n(\bar{x},b)y^{-1} = 1$ for some $\bar{x}, y \in A$. Then by Greendlinger's lemma, there is a subword $u$ of $w_n(\bar{x},b)y^{-1}$ such that $r \equiv uv$ for some relator $r$ with $|u| > \frac{17}{20}|r|$. However, as $u$ does not include any of the generators $c$, $d$, $f_1$, or $f_2$, $r$ must be of the form $w_m(\bar{a},b) = a'$ or $w_m(\bar{a},b) = e$.

As a large subword of $r$, $u$ contains segment of the form $a_1\cdots a_kb^{100m+i}a_1\cdots a_k b^{100m+i+1}$. But $u$ is also a subword of $w_n(\bar{x},b)y^{-1}$, which forces $m = n$, and $\bar{a} \equiv \bar{x}$. This means $y = w_m(\bar{a},b) = a$, and so $\mc{A} \models f(\bar{x}) = y$.
\end{proof}

\begin{lemma}\label{torsion-implies-f}
Fix $i = 1$ or $i = 2$. If $x^{p_i} = e$ in $G(\mc{A})$ but $x \neq e$, then $x$ is a conjugate of $f_i^n$ for some $n$ with $p_i \nmid n$.
\end{lemma}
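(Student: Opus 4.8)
The plan is to feed the Torsion Lemma into a classification of which relators of $G(\mc{A})$ are proper powers. Since $x^{p_i}=e$ and $x\neq e$, the element $x$ has finite nontrivial order, so $G(\mc{A})$ is not torsion-free and the Torsion Lemma supplies a relator $r$ in the symmetrized closure of the form $r=z^m$ with $m\geq 2$, such that $x$ is conjugate in $G(\mc{A})$ to a power of $z$.

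The heart of the proof is the claim that the only proper powers among the relators of (the symmetrized closure of) $G(\mc{A})$ are $f_1^{\pm p_1}$ and $f_2^{\pm p_2}$. Indeed, a word $z^m$ with $m\geq 2$ is periodic with period $|z|=|z^m|/m$ strictly less than its length; since all relators are cyclically reduced, this would force the cyclic sequence of exponents appearing on ``repeated single generator'' syllables to be a proper repetition. The relator $f_j^{p_j}$ is literally $(f_j)^{p_j}$, and since $p_j$ is prime its only expression as $z^m$ with $m\geq 2$ has $z=f_j^{\pm 1}$, so a conjugate of a power of such a $z$ is a conjugate of a power of $f_j$. For every other defining relator --- the words $u_A(a,c)$, $u_A(a,d)$, $u_b(b,f_k)$, $u_c(c,f_k)$, $u_d(d,f_k)$, $v(f_k,f_\ell)$, and $w_n(\bar a, b)$ or $w_n(\bar a, b)(a')^{-1}$ --- the exponents occurring on the repeated generator are pairwise distinct (the distinct squares in the $u$'s and $v$'s, and $100n+1,\dots,100n+100$ in $w_n$), so no cyclic rotation or inverse of such a word can be a proper power. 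Hence $r=f_j^{\pm p_j}$ for some $j\in\{1,2\}$, and therefore $x$ is conjugate to $f_j^{\,n}$ for some integer $n$.

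It remains to see that $j=i$ and $p_i\nmid n$. The element $f_j$ has order exactly $p_j$: the relator $f_j^{p_j}=e$ forces the order to divide the prime $p_j$, and $f_j\neq e$ because the one-letter word $f_j$ is Dehn-minimal and hence, by Dehn's algorithm, nontrivial in $G(\mc{A})$. Since $x$ is conjugate to $f_j^{\,n}$ and $x\neq e$, we get $f_j^{\,n}\neq e$, i.e.\ $p_j\nmid n$. Now $x^{p_i}=e$ gives $f_j^{\,np_i}=e$, so $p_j\mid np_i$; if $j\neq i$ then $\gcd(p_j,p_i)=1$ forces $p_j\mid n$, contradicting $f_j^{\,n}\neq e$. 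Therefore $j=i$, and $x$ is conjugate to $f_i^{\,n}$ with $p_i\nmid n$, as required.

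The step I expect to be the main obstacle is the classification of proper-power relators: one must verify that, after closing the relator set under inverses and cyclic permutations, none of the long $u$-, $v$-, or $w$-relators acquires hidden periodicity --- in particular, a cyclic rotation can split a power-of-a-single-generator syllable across the seam, and one has to check this does not manufacture a proper power. The observations that periodicity of a cyclically reduced word is a cyclic invariant and that all the relevant exponent sequences consist of distinct integers resolve this, but it is where the bookkeeping lives.
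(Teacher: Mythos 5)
Your proof is correct and follows the same route as the paper's: apply the Torsion Lemma to conclude that $x$ is conjugate to a power $f_j^n$, then use coprimality of $p_1$ and $p_2$ to force $j=i$ and $p_i\nmid n$. The paper leaves implicit the verification you spell out --- that $f_1^{\pm p_1}$ and $f_2^{\pm p_2}$ are the only proper-power relators in the symmetrized closure --- and your argument for this (distinct exponent sequences rule out periodicity, which is preserved under cyclic rotation and inversion) is sound.
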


\begin{proof}
By the Torsion Lemma, $x$ is a conjugate of $f_j^n$ for some $j$, $n$. Then $f_j^{n p_i} = e$, and so $p_j \mid n p_i$. So either $i = j$, or $p_j \mid n$. The latter cannot happen, as $x$ is not the identity. So $x = f_i^n$, and since $x$ is not the identity, $p_i \nmid n$.
\end{proof}

\begin{lemma}\label{lem:vv}
Fix an $\mc{L}$-structure $\mc{A}$. Then, in $G(\mc{A})$, if $v(x,y) = v(y,x) = e$, where $x$ and $y$ are conjugates of $f_1^m$ and $f_2^n$ respectively $(p_1 \nmid m$, $p_2 \nmid n$), then $x$ and $y$ are both conjugates of $f_1$ and $f_2$ by the same element.
\end{lemma}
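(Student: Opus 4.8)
The plan is to study the word $v(x,y)$ with the small-cancellation machinery, in the style of the proof of Lemma~\ref{lem:uu}, after first putting things in a normal form by conjugating. Since $x$ is a conjugate of $f_1^m$ and $y$ a conjugate of $f_2^n$, I would write $x = g_1 f_1^m g_1^{-1}$ and $y = g_2 f_2^n g_2^{-1}$ with $g_1,g_2$ of minimal length, so $g_1$ does not end in a power of $f_1$ and $g_2$ does not end in a power of $f_2$. Conjugating the whole configuration by $g_1^{-1}$ (an automorphism of $G(\mc{A})$, which preserves both hypotheses $v(x,y)=v(y,x)=e$ and the desired conclusion) reduces to the case $x = f_1^m$ and $y = h f_2^n h^{-1}$ with $h = g_1^{-1}g_2$. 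Absorbing a leading power of $f_1$ in $h$ into the conjugator (a further conjugation by a power of $f_1$, which fixes $x=f_1^m$) and a trailing power of $f_2$ in $h$ into $f_2^n$, I may further assume $h$ neither begins with $f_1^{\pm1}$ nor ends with $f_2^{\pm1}$. With these normalizations it suffices to prove two things: that $h = e$, and that $m \equiv 1 \pmod{p_1}$ and $n \equiv 1 \pmod{p_2}$. Indeed the first gives $x=f_1^m$, $y=f_2^n$ in the normalized frame; the second then gives $x=f_1$, $y=f_2$ there (as $f_1^m=f_1$ and $f_2^n=f_2$ in $G(\mc{A})$), so the element $g$ obtained by composing all the conjugations above satisfies $x = g f_1 g^{-1}$ and $y = g f_2 g^{-1}$.

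To show $h=e$, suppose not. I would feed the relation $v(y,x)=e$ — conjugated by $h^{-1}$, so it reads $v(f_2^n, h^{-1}f_1^m h) = e$, i.e.\ $f_2^{n}\,(h^{-1}f_1^{4001^2 m}h)\,f_2^{n}\,(h^{-1}f_1^{4002^2 m}h)\cdots f_2^{n}\,(h^{-1}f_1^{5000^2 m}h) = e$ — into Greendlinger's Lemma, after first reducing every $f_1$-exponent modulo $p_1$ and every $f_2$-exponent modulo $p_2$ to magnitude below $p_1/2$, resp.\ $p_2/2$. The resulting word $W$ is cyclically reduced (this is exactly where the normalizations of $h$ are used), nontrivial as a word, and trivial in $G(\mc{A})$. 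One checks $W$ is not itself a relator (no relator simultaneously contains long powers of $f_1$ and long powers of $f_2$, while $W$ does), so $W$ contains a subword that is more than $\frac{17}{20}$ of a cyclic permutation of a relator; after the exponent reduction no long power of $f_1$ or $f_2$ alone can match $f_1^{p_1}$ or $f_2^{p_2}$, and since $W$ exhibits isolated copies of $f_2^{n}$ separated by the varying blocks $h^{-1}f_1^{km}h$, the only relators that can supply such a subword are cyclic permutations of $v(f_2,f_1)^{\pm1}$ (and, if one uses $v(x,y)=e$ instead, of $v(f_1,f_2)^{\pm1}$). But a long subword of $v(f_2,f_1)$ has the shape $f_2 f_1^{a_1} f_2 f_1^{a_2}\cdots$ where the $a_i$ are a long run of consecutive squares, hence all distinct; matching this against $W$ is impossible, because the syllables of $h$ recur with fixed lengths in every period of $W$ and so cannot account for the many distinct exponents $a_i$, while the single varying large $f_1$-power per period cannot by itself supply the required alternation. (If $h=e$ then $W$ does have the right shape, which is why the step needs $h\neq e$.) Carrying this out — and also using the second relation to rule out $h$ containing $b$, $c$, or $d$ via the $u_b,u_c,u_d$ relators — is a case analysis parallel to cases (a)--(d) in the proof of Lemma~\ref{lem:uu}, and yields $h=e$.

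Finally, with $h=e$ we have $v(f_1^m,f_2^n)=e$. Reducing exponents, the word $f_1^{m} f_2^{4001^2 n} f_1^{m} f_2^{4002^2 n}\cdots f_1^{m} f_2^{5000^2 n}$ has isolated copies of $f_1^{m}$ separated by $f_2$-blocks of sizes congruent to $4001^2 n,\dots,5000^2 n$ modulo $p_2$; as above, Greendlinger's Lemma forces a long common subword with a cyclic permutation of $v(f_1,f_2)^{\pm1}$, and equating the $f_2$-block sizes there gives a congruence of the form $n\,j^2 \equiv \pm (j+c)^2 \pmod{p_2}$ valid for enough consecutive $j$ to be a polynomial identity, so $n \equiv \pm1$; since in $v(f_1,f_2)$ the block sizes $(4000+i)^2$ occur in increasing order, in fact $n\equiv 1 \pmod{p_2}$, and symmetrically $m\equiv 1\pmod{p_1}$.

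I expect the main obstacle to be the step $h=e$: unlike in Lemma~\ref{lem:uu}, where the auxiliary letters were all powers of a single generator, here $h$ is an arbitrary group word, so the Greendlinger analysis must simultaneously track free cancellation among $h$, the $f_1$-powers, and the $f_2$-powers, as well as the possibility that $h$ involves $b$, $c$, or $d$. Choosing the normalizations of $h$ so that the key words stay freely and cyclically reduced is what keeps this under control, and it seems that both relations $v(x,y)=e$ and $v(y,x)=e$ are genuinely needed, since each constrains $h$ only ``from one side.''
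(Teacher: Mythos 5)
Your normalization and overall plan match the paper's: conjugate so that one of the two elements is a pure power of one of the $f_i$'s, write the other as $h f_2^n h^{-1}$ (the paper's $z$ plays the role of your $h^{-1}$, with the roles of $x$ and $y$ swapped), reduce exponents modulo $p_1,p_2$, and attack the resulting word with Greendlinger's Lemma. Your third step (once the conjugator is trivial, forcing $m\equiv n\equiv 1$ by matching block lengths against $v(f_1,f_2)$, and ruling out the inverse relator by the ordering of the blocks) is essentially the paper's closing sentence, fleshed out; that part is fine.

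The genuine gap is in the middle step, showing $h=e$, and you yourself flag it as the main obstacle. Two issues. First, you normalize $g_1,g_2$ to be shortest, but $h=g_1^{-1}g_2$ need not be a shortest (Dehn-minimal) spelling, and Dehn-minimality of the conjugator is exactly what is needed to derive a contradiction from a large Greendlinger subword landing inside it; the paper instead fixes a frame and then chooses the conjugator $z$ to be of minimal length directly. Second, and more importantly, your argument asserts ``the only relators that can supply such a subword are cyclic permutations of $v(f_2,f_1)^{\pm 1}$'' and then tries to rule this out by a pattern-matching argument about distinct exponents. This is not justified: with $h$ an arbitrary word, the Greendlinger subword $u$ could a priori lie entirely inside a copy of $h$ or of $h^{-1}$ (so $r$ could be any relator), or could straddle from $h$ across $f_1^{a_i}$ into $h^{-1}$. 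The paper disposes of both possibilities with a single clean observation you don't use: every relator of $G(\mc{A})$ is a positive word (all exponents $+1$), hence $u$ consists of letters of a single sign; since $z$ and $z^{-1}$ are letter-by-letter sign-flipped, $u$ cannot meet both a copy of $z$ and a copy of $z^{-1}$. Combined with the exponent bounds $|m|<p_1/2$, $|n_i|<p_2/2$ (which rule out $r=f_i^{p_i}$) and the bound $\frac{1}{20}|r|$ on any single-generator run in $r$, this forces the overlap of $u$ with a single copy of $z$ to exceed $\tfrac12|r|$, contradicting the Dehn-minimality of $z$, without any case analysis on what letters $z$ contains or which relator $r$ is. I'd recommend replacing your block-pattern argument with this sign observation; otherwise the case analysis you gesture at (``parallel to cases (a)--(d) of Lemma~\ref{lem:uu}'') would need to be carried out in full, and it is considerably more delicate here than in Lemma~\ref{lem:uu} because $h$ is not a power of a single generator.
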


\begin{proof}
Without loss, we may assume that $|m| < p_1/2$. By conjugating both $x$ and $y$, we may also assume that $y \equiv f_2^n$. Assume that $x = zf_1^m z^{-1}$, and fix a shortest spelling of $z$ in the generating set $\{ a \}_{a \in A} \cup \{ b, c, d, f_1, f_2 \}$, so $x \equiv zf_1^m z^{-1}$. By abusing notation, we will write $z$ to mean this fixed spelling of $z$. By conjugating with $f_2$ and/or reducing with $f_1^m$, we may also assume that $z$ does not start with $f_2$ or end with $f_1$. Then we have 
$$ z f_1^m z^{-1} f_2^{4001^2n} z f^m_1 z^{-1} f_2^{4002^2n} \cdots z f_1^m z^{-1} f_2^{5000^2n} = e$$

We may rewrite this to be 
$$ z f_1^m z^{-1} f_2^{n_1} z f^m_1 z^{-1} f_2^{n_2} \cdots z f_1^m z^{-1} f_2^{n_{1000}} = e$$
where $n_i$ is congruent to $(4000+i)^2n$ modulo $p_2$ and $-p_2/2 < n_i < p_2/2$ for all $i$, and there is no reduction.

By Greendlinger's lemma, we get a subword $u$ of the left hand side such that $u$ is also a subword of a relator $r$ with $|u| > \frac{17}{20}|r|$. Suppose $z$ is nontrivial, then $u$ cannot intersect with both some copy of $z$ and some copy of $z^{-1}$, as the letters in each relator are all positive or all negative. However, because $|m| < p_1/2$ and $|n_i| < p_2/2$, $r$ cannot be $f_1^{p_1} = e$ or $f_2^{p_2} = e$. Hence $u$ must intersect with $z$ or $z^{-1}$. Assume that $u$ intersects with $z$, then $u$ must be contained in some $f_2^{k} z f_1^{\ell}$. But any consecutive $f_i$'s in $r$ cannot have length more than $\frac1{20}|r|$, so the intersection of $u$ with $z$ must have length $>\frac12|r|$, so we may get a shorter spelling of $z$, a contradiction. Thus, $z$ must be trivial, and we have 
$$ f_1^m f_2^{n_1} f^m_1 f_2^{n_2} \cdots f_1^m f_2^{n_{1000}} = e$$

Finally arguing by Greendlinger's lemma and noting that the only possibility of $r$ is $v(f_1,f_2) = e$, we get $x \equiv f_1$. By symmetry we also get that $y \equiv f_2$.
\end{proof}

\begin{lemma}\label{lem:orbit}
The orbit of $(b,c,d,f_1,f_2)$ is definable in $G(\mc{A})$ by a finitary quantifier-free formula.
\end{lemma}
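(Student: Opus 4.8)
The plan is to write down an explicit finitary quantifier-free formula $\psi(x_1,x_2,x_3,x_4,x_5)$ and show that $G(\mc{A}) \models \psi(g_1,\ldots,g_5)$ if and only if $(g_1,\ldots,g_5)$ lies in the orbit of $(b,c,d,f_1,f_2)$ under $\mathrm{Aut}(G(\mc{A}))$. The formula will simply assert that the five elements satisfy all the relations that the construction imposes on the tuple $(b,c,d,f_1,f_2)$, together with the non-torsion conditions needed to invoke Lemma~\ref{lem:uu}. Concretely, $\psi$ says: $x_4^{p_1} = e$, $x_5^{p_2} = e$, $v(x_4,x_5) = v(x_5,x_4) = e$, $u_b(x_1,x_4) = u_b(x_1,x_5) = e$, $u_c(x_2,x_4) = u_c(x_2,x_5) = e$, $u_d(x_3,x_4) = u_d(x_3,x_5) = e$, and $x_1^{p_1} \neq e$, $x_1^{p_2} \neq e$, and likewise for $x_2$ and $x_3$. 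All of these are finitary and quantifier-free (the $p_i$ are fixed numbers, so $x^{p_i} = e$ and $x^{p_i} \neq e$ are atomic/negated-atomic). Clearly $(b,c,d,f_1,f_2)$ satisfies $\psi$, and the non-torsion conditions for $b,c,d$ hold because by the Torsion Lemma any torsion element is conjugate to a power of $f_1$ or $f_2$, which $b,c,d$ are not.

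The forward direction is where the real work is. Suppose $(g_1,\ldots,g_5)$ satisfies $\psi$. First, from $g_4^{p_1} = e$, $g_5^{p_2} = e$, $v(g_4,g_5) = v(g_5,g_4) = e$: if $g_4 \neq e$ then by Lemma~\ref{torsion-implies-f} $g_4$ is a conjugate of $f_1^m$ with $p_1 \nmid m$; similarly $g_5$ is a conjugate of $f_2^n$ with $p_2 \nmid n$ (if $g_5 \neq e$). We would need to rule out $g_4 = e$ or $g_5 = e$ — but this is forced by the conditions on $g_1$: e.g.\ $u_b(g_1,g_4) = e$ with $g_1$ non-torsion forces (by an argument as in Lemma~\ref{lem:uu}, or just by observing $u_b(g_1,e)$ would be a power of $g_1$) that $g_4 \neq e$; I would include this as a short sublemma or fold it into the argument. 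Then Lemma~\ref{lem:vv} applies: $g_4 = z f_1 z^{-1}$ and $g_5 = z f_2 z^{-1}$ for a common conjugator $z$. Now conjugate the whole tuple by $z^{-1}$; this is harmless since we are computing an orbit, so we may assume $g_4 = f_1$ and $g_5 = f_2$ outright. Then the remaining conditions $u_b(g_1,f_1) = u_b(g_1,f_2) = e$ with $g_1$ non-torsion give $g_1 = b$ by Lemma~\ref{lem:uu}(2), and similarly $g_2 = c$, $g_3 = d$ by parts (3) and (4). Hence the original tuple is $(z b z^{-1}, z c z^{-1}, z d z^{-1}, z f_1 z^{-1}, z f_2 z^{-1})$.

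It remains to observe that conjugation by any $z \in G(\mc{A})$ is an automorphism, so every such tuple is indeed in the orbit of $(b,c,d,f_1,f_2)$; combined with the previous paragraph this shows the orbit is exactly the solution set of $\psi$, which is finitary quantifier-free. The main obstacle I anticipate is the bookkeeping in the "forward" direction — specifically making sure the non-torsion hypotheses on $g_1,g_2,g_3$ genuinely rule out the degenerate cases $g_4 = e$ or $g_5 = e$ (so that Lemmas~\ref{torsion-implies-f} and \ref{lem:vv} can be invoked), and confirming that after conjugating by $z^{-1}$ the hypotheses of Lemma~\ref{lem:uu}(2)--(4) are still literally satisfied. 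These are routine given the lemmas already proved, but they are the points that need care. Note this lemma does not yet claim the orbit of $(b,c,d,f_1,f_2)$ is the same as that of the product $bcdf_1f_2$ promised in Theorem~\ref{thm:main-extra}(2); that identification, if needed, would be handled separately.
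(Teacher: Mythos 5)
Your proposal is correct and follows the same strategy as the paper's proof: write down the finitary quantifier-free formula asserting the defining relations on the five coordinates together with the non-torsion of $x_1,x_2,x_3$, then invoke Lemma~\ref{torsion-implies-f}, Lemma~\ref{lem:vv}, and Lemma~\ref{lem:uu} in that order. The one difference is that the paper's formula $\varphi$ also explicitly includes the conjuncts $t_1\neq e$ and $t_2\neq e$, whereas you omit them and recover them from the other conditions via the Torsion Lemma (a valid but slightly longer route); including them outright makes the argument cleaner.
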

\begin{proof}
Let $\varphi(x,y,z,t_1,t_2)$ be the formula which says that:
\begin{itemize}
	\item $t_1^{p_1} = e$ but $t_1 \neq e$,
	\item $t_2^{p_2} = e$ but $t_2 \neq e$,
	\item $v(t_1,t_2) = v(t_2,t_1) = e$,
	\item $u_b(x,t_1) = u_b(x,t_2) = e$ but $x^{p_1} \neq e$ and $x^{p_2} \neq e$,
	\item $u_c(y,t_1) = u_c(y,t_2) = e$ but $y^{p_1} \neq e$ and $y^{p_2} \neq e$,
	\item $u_d(z,t_1) = u_d(z,t_2) = e$ but $z^{p_1} \neq e$ and $z^{p_2} \neq e$.
\end{itemize}
Let $(x,y,z,t_1,t_2)$ satisfy $\varphi$. We claim that, up to conjugation, $(x,y,z,t_1,t_2) = (b,c,d,f_1,f_2)$.

First we claim that $(t_1,t_2)$ is conjugate to $(f_1,f_2)$. By Lemma \ref{torsion-implies-f}, $t_1 = g f_1^m g^{-1}$ with $p_1 \nmid m$ and $t_2 = h f_2^n h^{-1}$ with $p_2 \nmid n$. Then by Lemma \ref{lem:vv}, we get that $(t_1,t_2)$ is conjugate to $(f_1,f_2)$.


We may now assume that $t_1 = f_1$ and $t_2 = f_2$. By Lemma \ref{lem:uu}, we then have $(x,y,z) = (b,c,d)$, which completes the proof. 
\end{proof}

\begin{lemma}\label{lem:wp}
From a presentation of $\mc{A}$, we can compute the word problem of $G(\mc{A})$.
\end{lemma}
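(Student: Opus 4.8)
The plan is to run Dehn's algorithm, noting that the only part of the presentation of $G(\mc{A})$ that depends on $\mc{A}$ is controlled by atomic facts about $\mc{A}$. Since $G(\mc{A})$ is a $C'(1/20)$ (hence $C'(1/6)$) small cancellation group, a word $w$ in the generators equals the identity of $G(\mc{A})$ if and only if its Dehn reduction is the empty word; so it suffices to implement, uniformly in the atomic diagram of $\mc{A}$, the basic step of Dehn's algorithm: given a word $u$, decide whether $u$ is a subword of a cyclic permutation of some relator $r$ or of $r^{-1}$ with $|u| > |r|/2$, and if so output the complementary word. Only relators with $|r| < 2|u|$ are relevant, so for each fixed $u$ this is a finite search, provided we can list the relevant relators effectively from a presentation of $\mc{A}$.

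The relators of $G(\mc{A})$ are: $f_i^{p_i}$; $v(f_1,f_2)$ and $v(f_2,f_1)$; the $u_b(b,f_i)$, $u_c(c,f_i)$, $u_d(d,f_i)$; the $u_A(a,c)$ and $u_A(a,d)$ for $a \in A$; and the $w_m$-relators. The first three kinds form a fixed finite list of explicit words, independent of $\mc{A}$. The fourth kind is also harmless: a presentation of $\mc{A}$ tells us which generators of $G(\mc{A})$ are indexed by elements of $A$, and if $u$ is more than half of a cyclic permutation of $u_A(a,c)^{\pm 1}$ or $u_A(a,d)^{\pm 1}$ then (since $|u_A(a,c)|/2$ dwarfs the longest power of $c$ occurring in $u_A(a,c)$) the letter $a$, together with $c$ or $d$, literally occurs in $u$, so there are only finitely many relators of this kind to test $u$ against, all computable. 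Thus the real issue, and the step I expect to be the main obstacle, is the $w_m$-relators, which are precisely the ones that encode the atomic diagram of $\mc{A}$.

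These are $w_m(\bar{a},b)$ when the $m$th symbol of $\mc{L}$ is a relation symbol $R$ and $\mc{A} \models R(\bar{a})$, and $w_m(\bar{a},b)(a')^{-1}$ when the $m$th symbol is a function symbol $f$ and $a' = f^{\mc{A}}(\bar{a})$; here $\bar{a}$ ranges over all tuples from $A$, so we cannot afford to search over $\bar{a}$. The point is that if $u$ is more than half of a cyclic permutation of such a relator $r^{\pm 1}$, then $u$ already determines $m$ and $\bar{a}$. Indeed, viewing $r$ cyclically as $100$ blocks $x_1\cdots x_n$ (each of length $n$, the arity of the $m$th symbol) alternating with the maximal $b$-blocks $b^{100m+1},\ldots,b^{100m+100}$ — the function case merely inserting the single extra letter $(a')^{-1}$ — one checks from $|r| = 100n + 10000m + 5050$ that any subarc of $r$ missing some full maximal $b$-block or some full block $x_1\cdots x_n$ has length less than $2n + 200m + 201 < |r|/2$; hence $|u| > |r|/2$ forces $u$ to contain a full block $b^{100m+i}$ with $1 \le i \le 100$, from which we read off $m$, and a full block $x_1\cdots x_n$, from which we read off $\bar{a}$. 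Now consult the presentation of $\mc{A}$: $m$ tells us whether the $m$th symbol is a relation symbol $R$ or a function symbol $f$; in the former case the relator exists if and only if $\mc{A} \models R(\bar{a})$, and then it is a cyclic permutation of $w_m(\bar{a},b)^{\pm 1}$; in the latter case we compute $a' = f^{\mc{A}}(\bar{a})$ and the relator is a cyclic permutation of $(w_m(\bar{a},b)(a')^{-1})^{\pm 1}$. Either way we have obtained the relevant relator, or learned there is none, and can complete the basic step by a finite comparison. Iterating, Dehn's algorithm reduces any input word $w$ to a Dehn-minimal word, which is empty exactly when $w$ represents the identity of $G(\mc{A})$; every step used only the atomic diagram of $\mc{A}$, so the word problem of $G(\mc{A})$ is computable from any presentation of $\mc{A}$.
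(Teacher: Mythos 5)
Your proof is correct and takes essentially the same approach as the paper: run Dehn's algorithm, and observe that the only obstacle to effectiveness --- enumerating the relators that could share a long subword with a given word --- can be overcome because any subword exceeding half of a relator's length already pins down the relator, and the atomic diagram of $\mc{A}$ then tells you whether that relator is actually present. The paper's own write-up is a bit more terse (it simply notes that a large common subword must use all the non-power letters of the relator, so one only needs to search relators built from letters appearing in $w$ and with a bounded $m$), whereas you carry out a more explicit block-count to read off $m$ and $\bar{a}$ directly from the candidate subword; these are the same idea with different bookkeeping.
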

\begin{proof}
To determine if a word in the generating set $\{a\}_{a \in A} \cup \{b,c,d,f_1,f_2\}$ represents the identity, we will run Dehn's algorithm, but in the group presentation defined in the construction of $G(\mc{A})$. Note that this group presentation is computable from a presentation of $\mc{A}$. As the group presentation is $C'(1/20)$ small cancellation, Dehn's algorithm, as an infinite abstract procedure, yields correct output.

To use Dehn's algorithm effectively, we have to be able to decide whether, for a given word $w$, $w$ is Dehn-minimal, that is, to decide whether $w$ contains a subword $u$ which is also a subword of a relator $r = uv$ with $|u| > |r|/2$. We claim that we need only check finitely many relators $r$, and that we can effectively compute a list of these relators. First, as $m$ gets larger, the relations having $w_m(\overline{a},b)$ on the left hand side get longer, so there is a finite bound on the values of $m$ we need to check. Second, any subword containing at least half of one of the relations has to contain all the letters used on the left hand side of the relation. So the only relators $r$ which might have a large common subword with $w$ are those whose left hand side contains only letters appearing in $w$. For any given finite set of letters, and bound on $m$, there are only finitely many relators which use those letters, and we can use the diagram of $\mc{A}$ to compute a list of these. Thus Dehn's algorithm is effective.
\end{proof}

\begin{lemma}\label{lem:finite-generation}
$\mc{A}$ is finitely generated if and only if $G(\mc{A})$ is finitely generated.
\end{lemma}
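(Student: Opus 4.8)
The idea throughout is that the relators $w_m(\bar a,b)$ let one compute the $\mc L$-functions inside $G(\mc A)$, so that for a finite $A_0 \subseteq A$ the subgroup of $G(\mc A)$ generated by $\{a\}_{a\in A_0}\cup\{b,c,d,f_1,f_2\}$ behaves like $G(\langle A_0\rangle_{\mc A})$. For the forward direction, suppose $\mc A$ is generated by a finite $A_0 \subseteq A$ and set $S_0=\{a\}_{a\in A_0}\cup\{b,c,d,f_1,f_2\}$. First I would show by induction along a term over $A_0$ representing a given $a \in A$ that $a\in\langle S_0\rangle$: if $a=f^{\mc A}(\bar a')$ with the entries of $\bar a'$ already in $\langle S_0\rangle$ and $f$ the $m$th symbol of $\mc L$, the relator $w_m(\bar a',b)=a$ gives $a=w_m(\bar a',b)\in\langle S_0\rangle$. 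Thus $\langle S_0\rangle$ contains every generator of $G(\mc A)$, so $G(\mc A)$ is finitely generated.

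For the converse, suppose $G(\mc A)$ is finitely generated. Since any finite generating set mentions only finitely many of the $\{a\}_{a\in A}$, after enlarging it we may assume $G(\mc A)=\langle S_0\rangle$ with $S_0=\{a\}_{a\in A_0}\cup\{b,c,d,f_1,f_2\}$ and $A_0\subseteq A$ finite. Let $\mc A_0=\langle A_0\rangle_{\mc A}$, with domain $A_0^{\ast}\subseteq A$; as a substructure it is closed under the function symbols of $\mc L$. The plan is to deduce $A=A_0^{\ast}$, which says $\mc A$ is generated by $A_0$. The inclusion $A_0^{\ast}\subseteq\langle S_0\rangle$ is the induction above. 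For the reverse I would prove the following claim: \emph{the natural homomorphism $\phi\colon G(\mc A_0)\to G(\mc A)$ that sends each generator to the generator of the same name (well defined, since every relator of $G(\mc A_0)$ is a relator of $G(\mc A)$) is injective, and $\langle S_0\rangle=\phi(G(\mc A_0))$.} Granting this, if $g\in G(\mc A_0)$ has $\phi(g)\in A$, then applying $\phi$ to $u_A(g,c)=u_A(g,d)=e$ (and using that $\phi$ fixes $c,d$ and is injective) yields these equations in $G(\mc A_0)$, so $g\in A_0^{\ast}$ by Lemma \ref{lem:uu}(1). Hence $\langle S_0\rangle\cap A=\phi(A_0^{\ast})=A_0^{\ast}$, and since $\langle S_0\rangle=G(\mc A)\supseteq A$ we conclude $A=A_0^{\ast}$.

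The equality $\langle S_0\rangle=\phi(G(\mc A_0))$ is routine: $S_0$ consists of names of generators of $G(\mc A_0)$, and conversely every generator $a\in A_0^{\ast}$ of $G(\mc A_0)$ satisfies $\phi(a)\in\langle S_0\rangle$ by the induction of the first paragraph. The substance is the injectivity of $\phi$, which I would obtain from Dehn's algorithm. Let $w$ be a word in the generators of $G(\mc A_0)$ with $w=e$ in $G(\mc A)$; I must show $w=e$ in $G(\mc A_0)$. Run Dehn's algorithm on $w$ in $G(\mc A)$: each step selects a relator $r$ and a subword $u$ of the current word with $u$ a subword of $r$ and $|u|>|r|/2$, and replaces $u$ by $v^{-1}$, where $r=uv$. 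Since the current word only mentions generators of $G(\mc A_0)$, so does $u$. The key bookkeeping fact is that a relator $r$ of $G(\mc A)$ having a subword of length $>|r|/2$ that mentions only generators of $G(\mc A_0)$ is itself a relator of $G(\mc A_0)$. Indeed: $f_i^{p_i}$, $v(f_1,f_2)$, $u_b$, $u_c$, $u_d$ use only $b,c,d,f_1,f_2$ and are relators of every $G(\cdot)$; in a cyclic conjugate of $u_A(a,c)^{\pm1}$ or $u_A(a,d)^{\pm1}$ the letter $a$ occurs $1000$ times, any two consecutive occurrences separated by at most $10^6$ letters, far below $|r|/2$, so a subword of length $>|r|/2$ contains $a$, forcing $a\in A_0^{\ast}$; and in a cyclic conjugate of $\big(w_m(\bar a,b)(a')^{\mp1}\big)^{\pm1}$ (or of $w_m(\bar a,b)^{\pm1}$) the block $a_1\cdots a_n$ occurs $100$ times, consecutive occurrences separated by powers of $b$ of length at most $100m+100$, and one checks from $|r|\geq 100n+10000m+5050$ that a subword of length $>|r|/2$ contains an entire such block, forcing $\bar a\subseteq A_0^{\ast}$; but then $a'=f^{\mc A}(\bar a)\in A_0^{\ast}$ (respectively $R^{\mc A}(\bar a)$ holds), because $\mc A_0$ is a substructure, so $r$ is indeed a relator of $G(\mc A_0)$.

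It follows that every reduction performed by Dehn's algorithm on $w$ in $G(\mc A)$ uses a relator all of whose letters name generators of $G(\mc A_0)$, so the substituted word $v^{-1}$ again mentions only those generators, and the whole run is literally Dehn's algorithm for the presentation $G(\mc A_0)$ — which is again $C'(1/20)$, being defined by a sub-presentation, and hence has solvable word problem. Therefore $w$ reduces to the empty word if and only if $w=e$ in $G(\mc A_0)$, proving injectivity and completing the proof. I expect the only genuinely non-routine part to be this last bookkeeping: showing that any relator of $G(\mc A)$ with a large subword over the old alphabet is an old relator, which rests on the numerology of the words $u_A$ and $w_m$ (the defining letters are repeated many times) and on a substructure being closed under the $\mc L$-functions, so that the ``output'' letters $a'$ of the $w_m$-relators never leave $A_0^{\ast}$.
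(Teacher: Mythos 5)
Your proposal is correct, and it rests on the same key bookkeeping fact the paper uses: any relator of $G(\mc{A})$ having a subword of length $>|r|/2$ over the alphabet $A_0^*\cup\{b,c,d,f_1,f_2\}$ must itself use only those letters, because the defining letters of each relator are repeated many times and because the substructure generated by $A_0$ is closed under the $\mc{L}$-functions. But the two proofs deploy this fact along genuinely different routes. The paper argues directly: assuming $\mc{A}$ is not finitely generated while $G(\mc{A})$ is, it fixes some $a'\notin A^*$, takes a shortest spelling $x$ of $a'$ over $A^*\cup\{b,c,d,f_1,f_2\}$, applies Greendlinger's lemma to $(a')^{-1}x=e$, and observes that by minimality the Greendlinger subword cannot lie inside $x$, so it must contain $(a')^{-1}$, forcing the relator to be of the form $w_m(\bar y,b)=a'$ with $\bar y\in A^*$ and hence $a'\in A^*$, a contradiction. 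You instead establish the structural claim that the natural map $\phi\colon G(\mc{A}_0)\to G(\mc{A})$ is injective---by tracking Dehn's algorithm and using the bookkeeping fact to see that every reduction step stays over the old alphabet---and then invoke Lemma \ref{lem:uu}(1) inside $G(\mc{A}_0)$ to pin down which elements of $\langle S_0\rangle$ lie in $A$. Your detour is longer for this lemma alone (you rerun small cancellation machinery that Lemma \ref{lem:uu} already packages, and you should note that Dehn's algorithm also interleaves free reductions, which harmlessly stay over the old alphabet), but it yields the cleaner, potentially reusable statement that $G(\mc{A}_0)$ embeds into $G(\mc{A})$ as the subgroup $\langle S_0\rangle$, an embedding fact the paper's argument contains only implicitly. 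Both proofs are sound; it is a tradeoff between the paper's economy and your added structural generality.
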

\begin{proof}
If $\mc{A}$ can be finitely generated by $a_1,\ldots,a_n$, then $G(\mc{A})$ can be finitely generated by $a_1,\ldots,a_n,b,c,d,f_1,f_2$.

Now assume $\mc{A}$ is not finitely generated. Suppose, towards a contradiction, that $G(\mc{A})$ is finitely generated by $g_1,\ldots,g_k$. Write each of $g_1,\ldots,g_k$ as a word in $A$ and $b,c,d,f_1$, and $f_2$. Only finitely many letters $a_1,\ldots,a_n$ from $A$ appear in these words, along with possibly $b$, $c$, $d$, $f_1$, and $f_2$. As $\mc{A}$ is not finitely-generated, there must be an $a' \in \mc{A}$ that cannot be written as a term (in the language of $\mc{A}$) in $a_1,\ldots,a_n$. On the other hand, $G(\mc{A})$ is generated by $a_1,\ldots,a_n,b,c,d,f_1,f_2$.

Let $A^* \subseteq A$ be the set generated (in $\mc{A}$) by $a_1,\ldots,a_n \in \mc{A}$. Note that $a' \in A \setminus A^*$. In $G(\mc{A})$, fix a shortest spelling $x$ of $a'$ using the letters $A^* \cup \{b,c,d,f_1,f_2\}$. Now $(a')^{-1} x = e$ in $G(\mc{A})$. By Greendlinger's lemma, there must be a large subword $u$ of some cyclic permutation $r$ of a relator such that $u$ is also a subword of $(a')^{-1}x$.

Because $A^*$ is closed under the application of functions in $\mc{A}$, when we look at the relators of $G(\mc{A})$ we see that if $r'$ is any relator with a large subword $u'$ which uses only the letters $A^* \cup \{b,c,d,f_1,f_2\}$, then $r'$ uses only the letters $A^* \cup \{b,c,d,f_1,f_2\}$. Since $x$ is a shortest spelling using the letters $A^* \cup \{b,c,d,f_1,f_2\}$, it cannot be that $u$ is a subword of $x$. Thus $u$ must contain $(a')^{-1}$.

But $u$ contains only one occurrence of $(a')^{-1}$, so it must be that $r$ is an inverse of a cyclic permutation of a relator the form $w_m(\bar{y},b) = a'$, with $\bar{y} \in A^*$. But then $a'$ can be written as a term in $\bar{y}$ using the $m$th function symbol, a contradiction. We conclude that if $\mc{A}$ is not finitely generated, then $G(\mc{A})$ is not finitely generated.
\end{proof}

We are now ready to put all of our lemmas together to prove Theorem \ref{thm:main-extra}.

\begin{proof}[Proof of Theorem \ref{thm:main-extra}]
Define $\tilde{G}$ by $\tilde{G}(\mc{A}) = (G(\mc{A}),b,c,d,f_1,f_2)$. By Lemma \ref{lem:wp}, $\tilde{G}$ is an effective functor. Given $G = \tilde{G}(\mc{A})$, we can construct a copy $F(G)$ of $\mc{A}$ by taking as its domain the set
\[ \Dom(F(G)) = \{ x \in G \mid u_A(x,c) = u_A(x,d) = e \},\]
and interpreting the $m$th symbol in $\mc{L}$ as either, if it is a relation symbol $R$, the set
\[ R^{F(G)} = \{ \bar{x} \in A \mid w_m(\bar{x},b) = e \},\]
or, if the $m$th symbol is a function $f$, as the set
\[ f^{F(G)}(\bar{x}) = \text{the unique $y$ such that $w_m(\bar{x},b) = y$}.\]
By Lemma \ref{lem:uu} and Lemma \ref{lem:functor-works}, $F(G)$ is isomorphic to $\mc{A}$. It is not hard to see that this is an effective functor. Indeed, it is induced by an interpretation of $\mc{A}$ in $\tilde{G}(\mc{A})$.

We want to show that $F$ and $\tilde{G}$ form a reduction by effective adjoint equivalence between the class of $\mc{L}$-structures and groups with five constants named. To do this, we must show that $F \circ \tilde{G}$ and $\tilde{G} \circ F$ are effectively naturally isomorphic to the identity functors of their respective categories.

Given $\mc{A}$, $F(\tilde{G}(\mc{A}))$ is isomorphic to $\mc{A}$ in an obvious way ($\mc{A}$ injects into $\tilde{G}(\mc{A})$ in a computable way, and $F(\tilde{G}(\mc{A}))$ picks out this subset). If $G \cong \tilde{G}(\mc{A})$, then $F(G)$ picks out an isomorphic copy of $\mc{A}$ which embeds into $G$; thus we can consider $G$ to be generated by $F(G)$ together with $b^G,c^G,d^G,f_1^G,f_2^G \in G$. $\tilde{G}(F(G))$ is generated by the same set $F(G)$, together with $b^{\tilde{G}(F(G))},c^{\tilde{G}(F(G))},d^{\tilde{G}(F(G))},f_1^{\tilde{G}(F(G))},f_2^{\tilde{G}(F(G))} \in \tilde{G}(F(G))$. This induces an obvious isomorphism between $G$ and $\tilde{G}(F(G))$.

Theorem \ref{thm:main-extra} then follows from Lemma \ref{lem:orbit} which implies that the orbit of the tuple $(b,c,d,f_1,f_2) \in G(\mc{A})$ is definable by a finitary quantifier-free formula and Lemma \ref{lem:finite-generation} which implies that $\mc{A}$ is finitely generated if and only if $G(\mc{A})$ is.
\end{proof}

\begin{remark}
$G(\mc{A})$, as a group without the constants named, has an automorphism group that is the semi-direct product of the inner automorphism group and the group of autormophisms $K$ that contains the autormophisms which fix $b,c,d,f_1$, and $f_2$. Furthermore, $K$ is naturally isomorphic to the outer automorphism group of $G(\mc{A})$, as well as the automorphism group of the $\mc{L}$-structure $\mc{A}$, and the automorphism group of $(G(\mc{A}),b,c,d,f_1,f_2)$.
\end{remark}

\section{Classes of Structures That Are Not Universal}\label{sec:fields}

In this section we give some examples of classes of finitely generated structures that are not universal among finitely generated structures. We first observe that there are uncountably many non-isomorphic finitely generated structures, but the class of finitely generated fields, the class of finitely generated commutative rings, and the class of finitely presented groups are all countable. Thus, these classes cannot be universal among finitely generated structures. However, this implies nothing about the computability strength of these structures. Below we give a ``stronger" argument for the non-universality of finitely generated fields, which should work even for other weaker notions of ``universality", by showing they always have low complexity Scott sentences. For instance, finitely generated fields are not universal even if one drops the uniformity from Definition \ref{def:universal}.

Recall that a Scott sentence for a structure $\mc{A}$ is an $\mc{L}_{\omega_1 \omega}$-formula $\varphi$ such that $\mc{A}$ is the only countable model of $\varphi$ up to isomorphism. In \cite{HTHo}, the authors showed that every finitely generated field has a d-$\Sigma^0_2$ Scott sentence. We will use this to argue that finitely generated fields are not universal among finitely generated structures. Recall that Montalb\'an \cite[Lemma 5.3]{MonICM} showed that two structures which are bi-interpretable have Scott sentences of the same complexity, and that we know from \cite{HTHo} that there are finitely generated groups with no d-$\Sigma^0_2$ Scott sentence. Thus there is a finitely generated group which is not bi-interpretable with any finitely generated field.

However, we know from the previous section that while finitely generated groups are not universal, they are after naming constants. We will extend the argument above show that finitely generated fields are not universal even after naming constants.

\begin{proposition}\label{prop:add-const-ss}
Let $\mc{A}$ be a countable structure and $\bar{c} \in \mc{A}$. If $\mc{A}$ has a $\Sigma^0_\alpha$ (respectively $\Pi^0_\alpha$, d-$\Sigma^0_\alpha$) Scott sentence, then so does $(\mc{A},\bar{c})$.
\end{proposition}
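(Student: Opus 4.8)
The natural approach is to go directly: take a $\Sigma^0_\alpha$ Scott sentence $\varphi$ for $\mc{A}$ and modify it so that it also pins down the interpretation of the new constants $\bar c = (c_1,\dots,c_n)$. The key observation is that in a finitely generated structure — or really in any structure with a Scott sentence — each element $c_i$ is, up to automorphism, characterized by some $\mc{L}_{\omega_1\omega}$ formula, and in fact by a formula of controlled complexity once we already have a Scott sentence of complexity $\Sigma^0_\alpha$. More precisely, I would use the standard fact (from Scott analysis / the back-and-forth hierarchy) that if $\mc{A}$ has a $\Sigma^0_\alpha$ Scott sentence, then for every tuple $\bar a$ there is a $\Pi^0_\alpha$ (or at worst $\Sigma^0_\alpha$, depending on the exact bookkeeping) formula $\psi_{\bar a}(\bar x)$ that defines the automorphism orbit of $\bar a$ in $\mc{A}$. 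Granting this, one sets
\[
\varphi' \;=\; \varphi \;\wedge\; \psi_{\bar c}(\bar c),
\]
viewed as an $(\mc{L}\cup\{\bar c\})$-sentence, and checks that its only countable model up to isomorphism is $(\mc{A},\bar c)$: any model of $\varphi'$ has $\mc{L}$-reduct isomorphic to $\mc{A}$ via $\varphi$, and the clause $\psi_{\bar c}(\bar c)$ forces the image of $\bar c$ to lie in the orbit of $\bar c$, so the isomorphism can be adjusted by an automorphism of $\mc{A}$ to send $\bar c$ to $\bar c$.

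The steps, in order, are: (1) recall/cite the Scott-analysis fact that a $\Sigma^0_\alpha$ Scott sentence yields orbit-defining formulas of complexity $\Pi^0_\alpha$ for all tuples (this is essentially the content of the characterization of Scott rank via definability of orbits; for $\alpha$ small one can argue by hand, and in general it follows from the pull-apart/back-and-forth machinery); (2) form $\varphi' = \varphi \wedge \psi_{\bar c}(\bar c)$ and verify it is a Scott sentence for $(\mc{A},\bar c)$ as above; (3) track the complexity: $\varphi$ is $\Sigma^0_\alpha$ and $\psi_{\bar c}(\bar c)$ is $\Pi^0_\alpha$, so $\varphi'$ is naturally $\text{d-}\Sigma^0_\alpha$, which already handles the $\text{d-}\Sigma^0_\alpha$ case; for the pure $\Sigma^0_\alpha$ case one notes that a conjunction of a $\Sigma^0_\alpha$ and a $\Pi^0_\alpha$ sentence is again $\Sigma^0_\alpha$ up to logical equivalence when $\alpha\ge 1$ — or, better, one observes that if $\mc{A}$ has a $\Sigma^0_\alpha$ Scott sentence then the orbit of $\bar c$ is already $\Sigma^0_\alpha$-definable (being both the negation of a disjunction of orbits and cut out by $\exists$-information), so the conjunction stays $\Sigma^0_\alpha$; the $\Pi^0_\alpha$ case is symmetric. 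A cleaner uniform way to phrase step (3): a $\Sigma^0_\alpha$ Scott sentence can always be taken of the form $\exists \bar y\, \theta(\bar y)$ with $\theta$ a $\Pi^0_{\alpha}$-ish "atom" describing a generating tuple and everything definable from it; absorbing $\bar c$ as part of this named data changes nothing about the complexity.

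The main obstacle is precisely the complexity bookkeeping in step (3): naively, $\varphi \wedge \psi_{\bar c}(\bar c)$ looks like a $\text{d-}\Sigma^0_\alpha$ sentence, and one needs to argue it can be brought back down to $\Sigma^0_\alpha$ (resp. $\Pi^0_\alpha$) in those cases. The honest route is to recall that a structure has a $\Sigma^0_\alpha$ Scott sentence iff every orbit is $\Pi^0_\alpha$-definable and the structure satisfies a uniformity ("robustly $\Sigma^0_\alpha$") condition, iff — for $\alpha$ a successor, which is the main case of interest (e.g.\ $\text{d-}\Sigma^0_2$) — there is a tuple whose orbit is $\Sigma^0_{\alpha}$-definable generating everything; then the Scott sentence for $(\mc{A},\bar c)$ is built from the orbit formula of a tuple extending $\bar c$, and its complexity is governed by that orbit formula, not by a separate conjunct. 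I would therefore structure the proof around the orbit-complexity characterization of $\Sigma^0_\alpha$/$\Pi^0_\alpha$/$\text{d-}\Sigma^0_\alpha$ Scott sentences rather than around syntactic surgery on $\varphi$, citing the relevant equivalences (e.g.\ from Montalbán's work on Scott ranks), and then the argument for each of the three cases becomes a short uniform observation: the class of orbit-complexities is unchanged by passing from $\mc{A}$ to $(\mc{A},\bar c)$, since orbits in $(\mc{A},\bar c)$ are just orbits in $\mc{A}$ intersected with the single point $\bar c$ in the named coordinates.
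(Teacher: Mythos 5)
Your overall strategy --- conjoin a Scott sentence $\varphi$ for $\mc{A}$ with a formula $\psi(\bar c)$ defining the orbit of $\bar c$ --- is exactly the paper's approach. But the crucial complexity bookkeeping is off, and this is not a mere nuisance that can be patched with the hierarchy manipulations you propose. You guess that the orbit formula is ``$\Pi^0_\alpha$ (or at worst $\Sigma^0_\alpha$)''; the fact the paper actually uses, from Montalb\'an's characterization (Theorem 1.1 of the cited paper), is that if $\mc{A}$ has a $\Pi^0_{\alpha+1}$ Scott sentence --- in particular if it has a $\Sigma^0_\alpha$ or d-$\Sigma^0_\alpha$ one --- then every orbit is $\Sigma^0_\alpha$-definable. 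With that, there is no obstacle at all: a $\Sigma^0_\alpha$ conjunct leaves a $\Sigma^0_\alpha$ sentence $\Sigma^0_\alpha$, and a d-$\Sigma^0_\alpha$ sentence d-$\Sigma^0_\alpha$. Your proposed rescue --- ``a conjunction of a $\Sigma^0_\alpha$ and a $\Pi^0_\alpha$ sentence is again $\Sigma^0_\alpha$ up to logical equivalence when $\alpha\ge 1$'' --- is false; that would collapse the d-$\Sigma^0_\alpha$ level of the hierarchy. You do eventually mention the correct fact (``the orbit of $\bar c$ is already $\Sigma^0_\alpha$-definable'') but present it as one option among several, with a hand-wavy parenthetical justification, rather than as the load-bearing step it is.

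The $\Pi^0_\alpha$ case is also not ``symmetric'' in the way you suggest. There the paper uses a strictly stronger fact: if $\mc{A}$ has a $\Pi^0_\alpha$ Scott sentence, the orbit of $\bar c$ is $\Sigma^0_\beta$-definable for some $\beta<\alpha$. One needs this sharper bound because conjoining a $\Pi^0_\alpha$ sentence with a $\Sigma^0_\alpha$ conjunct would give d-$\Sigma^0_\alpha$, not $\Pi^0_\alpha$; only a $\Sigma^0_\beta$ conjunct with $\beta<\alpha$ keeps you inside $\Pi^0_\alpha$. Your closing suggestion --- reformulate everything through the orbit-complexity characterization of Scott-sentence complexity and then observe that orbits in $(\mc{A},\bar c)$ are orbits of extended tuples in $\mc{A}$ --- is a legitimate alternative route and would work, but it invokes more machinery than the paper needs; the paper's two-line argument is just the direct conjunction once the correct orbit complexity is cited.
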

\begin{proof}
If $\mc{A}$ has a $\Sigma^0_\alpha$ (respectively d-$\Sigma^0_\alpha$) Scott sentence $\varphi$, then it has a $\Pi^0_{\alpha + 1}$ Scott sentence, and so the orbit of each tuple is definable by a $\Sigma^0_\alpha$ formula (see Theorem of 1.1 \cite{Montalban15}); let $\psi(\bar{x})$ define the orbit of $\bar{c}$. Then $\varphi \wedge \psi(\bar{c})$ is a $\Sigma^0_\alpha$ (respectively d-$\Sigma^0_\alpha$) Scott sentence for $(\mc{A},\bar{c})$.

If $\mc{A}$ has a $\Pi^0_\alpha$ Scott sentence $\varphi$, then the orbit of $\bar{c}$ is defined by a $\Sigma^0_\beta$ formula $\psi(\bar{x})$ for some $\beta < \alpha$. Then $\varphi \wedge \psi(\bar{c})$ is a $\Pi^0_\alpha$ Scott sentence for $(\mc{A},\bar{c})$.
\end{proof}

\begin{theorem}
There is a finitely generated structure which is not bi-interpretable with any finitely generated field, even after naming finitely many constants from the field.
\end{theorem}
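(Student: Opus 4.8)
The plan is to produce a finitely generated structure $\mc{A}$ whose complexity outstrips what any finitely generated field can achieve, even after naming constants. By the previous section (Theorem \ref{thm:main-extra}), there is a finitely generated group $G$ with no d-$\Sigma^0_2$ Scott sentence; more precisely, \cite{HTHo} gives a finitely generated structure, which we may take to be a group, with no d-$\Sigma^0_2$ Scott sentence. I would take $\mc{A}$ to be such a structure. The whole argument then reduces to the observation that ``being bi-interpretable with a finitely generated field with finitely many named constants'' forces a d-$\Sigma^0_2$ Scott sentence on $\mc{A}$, contradicting the choice of $\mc{A}$.

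The key steps, in order, are as follows. First, recall from \cite{HTHo} that every finitely generated field $K$ has a d-$\Sigma^0_2$ Scott sentence. Second, apply Proposition \ref{prop:add-const-ss} with $\alpha = 2$: if $K$ has a d-$\Sigma^0_2$ Scott sentence and $\bar{c} \in K$ is a finite tuple, then $(K,\bar{c})$ also has a d-$\Sigma^0_2$ Scott sentence. Third, invoke Montalb\'an's result \cite[Lemma 5.3]{MonICM} that effectively bi-interpretable structures have Scott sentences of the same complexity: so if $\mc{A}$ were effectively bi-interpretable with some $(K,\bar{c})$ for $K$ a finitely generated field, then $\mc{A}$ would have a d-$\Sigma^0_2$ Scott sentence. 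Fourth, this contradicts the choice of $\mc{A}$ as a finitely generated structure with no d-$\Sigma^0_2$ Scott sentence. Hence no such $K$ and $\bar{c}$ exist, which is exactly the statement of the theorem.

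I do not expect a genuine obstacle here, since all the ingredients are either cited or already proved in the paper; the content is purely in assembling them. The one point that needs a little care is the meaning of ``naming finitely many constants from the field'': one should be explicit that a bi-interpretation of $\mc{A}$ with a finitely generated field $K$ after naming constants means precisely a bi-interpretation with the expanded structure $(K,\bar{c})$ for some finite tuple $\bar{c}$ from $K$, so that Proposition \ref{prop:add-const-ss} applies directly to $(K,\bar{c})$. A secondary point worth a sentence is that one must know such a witness $\mc{A}$ exists and may be taken finitely generated — this is the main construction of \cite{HTHo} — and that d-$\Sigma^0_2$ is the relevant threshold because finitely generated fields never exceed it while some finitely generated structures do. If one wanted the sharper conclusion phrased in the introduction (a finitely generated \emph{group} not bi-interpretable with any finitely generated field, even with constants), one simply takes $\mc{A}$ to be the group from \cite{HTHo}, or applies Theorem \ref{thm:main-extra} to the structure of \cite{HTHo} to code it into a finitely generated group, which has the same Scott complexity.
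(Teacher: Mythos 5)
Your proposal matches the paper's proof essentially verbatim: take a finitely generated structure with no d-$\Sigma^0_2$ Scott sentence, note that any finitely generated field $F$ with constants $\bar{c}$ named retains a d-$\Sigma^0_2$ Scott sentence by Proposition \ref{prop:add-const-ss}, and conclude via Montalb\'an's transfer result that bi-interpretability would force such a sentence on $\mc{A}$, a contradiction. No meaningful differences.
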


This implies that finitely generated fields are not universal among finitely generated structures, even after naming finitely many constants.

\begin{proof}
Let $\mc{A}$ be a finitely generated structure with no d-$\Sigma^0_2$ Scott sentence, and suppose towards a contradiction that it is effectively bi-interpretable with a finitely generated field $F$ possible with finitely many constants $\bar{c}$ named. Now $F$ has a d-$\Sigma^0_2$ Scott sentence, and so by the previous lemma, so does $(F,\bar{c})$. But then $\mc{A}$ also has a d-$\Sigma^0_2$ Scott sentence as $\mc{A}$ is effectively bi-interpretable with $(F,\bar{c})$. This contradiction proves the theorem.
\end{proof}

In some instances, we can also remove constants.

\begin{proposition}
Let $\mc{A}$ be a countable structure and $\bar{c} \in \mc{A}$.
\begin{itemize}
	\item If $(\mc{A},\bar{c})$ has a $\Sigma^0_\alpha$ Scott sentence, then so does $\mc{A}$.
	\item Suppose that the orbit of $\bar{c}$ is defined by a $\Sigma^0_\beta$ formula for some $\beta < \alpha$. If $(\mc{A},\bar{c})$ has a $\Pi^0_\alpha$ (respectively d-$\Sigma^0_\alpha$) Scott sentence, then so does $\mc{A}$.
\end{itemize}
\end{proposition}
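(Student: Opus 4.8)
The plan is to prove each bullet by essentially reversing the construction in Proposition~\ref{prop:add-const-ss}, using the existence of definable Scott families/orbit characterizations and the fact that a Scott sentence for $(\mc{A},\bar c)$ can be ``projected'' back down to $\mc{A}$ by existentially quantifying over $\bar c$. Fix a language $\mc{L}$ for $\mc{A}$ and let $\mc{L}^+ = \mc{L} \cup \{\bar c\}$ be the expanded language with new constants, and let $\varphi$ be the given Scott sentence for $(\mc{A},\bar c)$ in $\mc{L}^+$.

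For the first bullet, suppose $\varphi$ is $\Sigma^0_\alpha$. I would consider the $\mc{L}$-sentence $\psi \equiv \exists \bar x\, \varphi[\bar x/\bar c]$, obtained by replacing each constant $c_i$ by a fresh variable $x_i$ and prefixing an existential block; since $\varphi$ is $\Sigma^0_\alpha$ (and $\alpha \ge 1$, so an extra existential quantifier block in front is absorbed into the outermost $\Sigma^0_\alpha$ disjunction of $\exists$-formulas), $\psi$ is again $\Sigma^0_\alpha$. The claim is that $\psi$ is a Scott sentence for $\mc{A}$: clearly $\mc{A}\models\psi$ (witnessed by the actual $\bar c$); conversely if $\mc{B}\models\psi$, pick witnesses $\bar b \in \mc{B}$ with $(\mc{B},\bar b)\models\varphi$, hence $(\mc{B},\bar b)\cong(\mc{A},\bar c)$, and in particular $\mc{B}\cong\mc{A}$. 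The only thing to check carefully is that adding one existential quantifier block in front of a $\Sigma^0_\alpha$ formula keeps it $\Sigma^0_\alpha$, which follows directly from the recursive definition of the hierarchy given in the excerpt (a $\Sigma^0_\alpha$ formula is a countable disjunction of formulas $\exists\bar x\,\phi$ with $\phi$ of lower level, and a block of existentials can be merged into these).

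For the second bullet, suppose $\varphi$ is $\Pi^0_\alpha$ (the d-$\Sigma^0_\alpha$ case is handled by splitting into the $\Sigma^0_\alpha$ conjunct, treated via the first bullet's idea, and the $\Pi^0_\alpha$ conjunct, treated here). Now prefixing an existential quantifier does \emph{not} stay within $\Pi^0_\alpha$, so instead I use the hypothesis that the orbit of $\bar c$ in $\mc{A}$ is defined by a $\Sigma^0_\beta$ formula $\theta(\bar x)$ with $\beta < \alpha$. Consider $\psi \equiv \forall \bar x\,(\theta(\bar x) \to \varphi[\bar x / \bar c])$, equivalently $\forall \bar x\, (\neg\theta(\bar x) \vee \varphi[\bar x/\bar c])$. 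The conjunct $\varphi[\bar x/\bar c]$ is $\Pi^0_\alpha$ and $\neg\theta(\bar x)$ is $\Pi^0_\beta$ with $\beta < \alpha$, so $\neg\theta(\bar x)\vee\varphi[\bar x/\bar c]$ is (equivalent to) a $\Pi^0_\alpha$ formula, and prefixing a universal quantifier block keeps it $\Pi^0_\alpha$ by the recursive definition. The correctness argument: $\mc{A}\models\psi$ because any $\bar a$ satisfying $\theta$ is in the orbit of $\bar c$, so $(\mc{A},\bar a)\cong(\mc{A},\bar c)\models\varphi$. Conversely if $\mc{B}\models\psi$, I need to produce \emph{some} tuple in $\mc{B}$ satisfying $\theta^{\mc{B}}$ so that I can apply $\varphi$; here I would first note $\mc{B}$ must model the $\Sigma^0_\beta$ sentence $\exists\bar x\,\theta(\bar x)$ — but wait, $\psi$ alone does not obviously force this. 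The fix is to conjoin: take $\psi' \equiv (\exists \bar x\, \theta(\bar x)) \wedge \forall\bar x(\theta(\bar x)\to\varphi[\bar x/\bar c])$; since $\exists\bar x\,\theta(\bar x)$ is $\Sigma^0_\beta$ hence also $\Pi^0_\alpha$ (as $\beta<\alpha$, it sits below $\Pi^0_\alpha$ in the hierarchy), the conjunction $\psi'$ is $\Pi^0_\alpha$. Then $\mc{B}\models\psi'$ gives a witness $\bar b$ with $(\mc{B},\bar b)\models\varphi\cong(\mc{A},\bar c)$, so $\mc{B}\cong\mc{A}$, and $\mc{A}\models\psi'$ as before.

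The main obstacle I anticipate is purely bookkeeping about the infinitary complexity hierarchy: one must be careful that ``substitute a constant by a fresh variable and quantify'' behaves well — in particular that $\exists$ in front of $\Sigma^0_\alpha$ collapses back into $\Sigma^0_\alpha$, that $\forall$ in front of $\Pi^0_\alpha$ stays $\Pi^0_\alpha$, and that a $\Sigma^0_\beta$ or $\Pi^0_\beta$ formula with $\beta<\alpha$ may be treated as $\Pi^0_\alpha$ (or $\Sigma^0_\alpha$) when forming conjunctions/disjunctions with formulas at level $\alpha$; all of these are immediate from the recursive definition in the excerpt but need to be invoked explicitly. The other subtlety, already flagged above, is remembering to conjoin the sentence $\exists\bar x\,\theta(\bar x)$ in the $\Pi^0_\alpha$ case so that a model of $\psi'$ is guaranteed a tuple on which to instantiate $\varphi$; without it the argument that $\mc{B}\cong\mc{A}$ breaks down.
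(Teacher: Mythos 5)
Your first two cases ($\Sigma^0_\alpha$ and $\Pi^0_\alpha$) are correct and, up to notation, identical to the paper's proof, including your well-spotted need to conjoin $(\exists \bar{x})\theta(\bar{x})$ in the $\Pi^0_\alpha$ case so a model has a tuple to instantiate. The complexity bookkeeping you worry about is also fine.

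However, your treatment of the d-$\Sigma^0_\alpha$ case has a genuine gap. You propose to ``split into the $\Sigma^0_\alpha$ conjunct, treated via the first bullet's idea, and the $\Pi^0_\alpha$ conjunct, treated here,'' which, taken literally, produces a sentence of the form
\[
(\exists \bar{x})\,\varphi(\bar{x}) \ \wedge\ (\exists \bar{x})\,\theta(\bar{x}) \ \wedge\ (\forall \bar{x})\bigl(\theta(\bar{x}) \rightarrow \gamma(\bar{x})\bigr),
\]
where $\varphi$ is the $\Sigma^0_\alpha$ conjunct and $\gamma$ is the $\Pi^0_\alpha$ conjunct of the Scott sentence for $(\mc{A},\bar{c})$. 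The problem is that the witness $\bar{b}_1$ for $(\exists\bar{x})\varphi(\bar{x})$ and the tuple $\bar{b}_2$ realizing $\theta$ (and hence $\gamma$) need not coincide. Since neither $\varphi$ alone nor $\gamma$ alone is a Scott sentence for $(\mc{A},\bar{c})$, you cannot conclude that either $(\mc{B},\bar{b}_1)$ or $(\mc{B},\bar{b}_2)$ is isomorphic to $(\mc{A},\bar{c})$; you need a \emph{single} tuple $\bar{b}$ with $\mc{B}\models\varphi(\bar{b})\wedge\gamma(\bar{b})$. The paper's sentence achieves this coupling by placing the orbit formula inside the existential quantifier:
\[
(\exists \bar{x})\bigl[\theta(\bar{x}) \wedge \varphi(\bar{x})\bigr] \ \wedge\ (\forall \bar{x})\bigl(\theta(\bar{x}) \rightarrow \gamma(\bar{x})\bigr).
\]
Now the existential witness $\bar{b}$ satisfies $\theta$, so the universal clause forces $\gamma(\bar{b})$, giving $(\mc{B},\bar{b})\models\varphi\wedge\gamma$ and hence $(\mc{B},\bar{b})\cong(\mc{A},\bar{c})$. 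Note that $\theta(\bar{x})\wedge\varphi(\bar{x})$ remains $\Sigma^0_\alpha$ since $\theta$ is $\Sigma^0_\beta$ with $\beta<\alpha$, so the repaired sentence is still d-$\Sigma^0_\alpha$. Once this coupling is restored, your argument agrees with the paper's.
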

\begin{proof}
Suppose that $\varphi(\bar{c})$ is a $\Sigma^0_\alpha$ Scott sentence for $(\mc{A},\bar{c})$. Then $(\exists \bar{x}) \varphi(\bar{x})$ is a $\Sigma^0_\alpha$ Scott sentence for $\mc{A}$.

Suppose that $\varphi(\bar{c})$ is a $\Pi^0_\alpha$ Scott sentence for $(\mc{A},\bar{c})$ and let $\psi$ be a $\Sigma^0_\beta$ definition of the orbit of $\bar{c}$ for some $\beta < \alpha$. Then
\[ (\exists \bar{x}) \psi(\bar{x}) \wedge (\forall \bar{x}) (\psi(\bar{x}) \longrightarrow \varphi(\bar{x}))\] is a $\Pi^0_\alpha$ Scott sentence for $\mc{A}$.

Suppose that $\varphi(\bar{c}) \wedge \gamma(\bar{c})$ is a d-$\Sigma^0_\alpha$ Scott sentence for $(\mc{A},\bar{c})$, with $\varphi$ being $\Sigma^0_\alpha$ and $\gamma$ being $\Pi^0_\alpha$. Let $\psi$ be a $\Sigma^0_\beta$ definition of the orbit of $\bar{c}$ for some $\beta < \alpha$. Then
\[ (\exists \bar{x})[\psi(\bar{x}) \wedge \varphi(\bar{x})] \wedge  (\forall \bar{x}) (\psi(\bar{x}) \longrightarrow \gamma(\bar{x})) \]
is a d-$\Sigma^0_\alpha$ Scott sentence for $\mc{A}$.
\end{proof}

\section{Quasi Scott Sentences}\label{sec:quasi}

\subsection{General Results}

In \cite{HTHo}, the authors proved:

\begin{theorem}\label{thm:scott-eq}
A finitely generated structure $\mc{A}$ has a d-$\Sigma^0_2$ Scott sentence if and only if it does not contain a copy of itself as a proper $\Sigma^0_1$-elementary substructure.
\end{theorem}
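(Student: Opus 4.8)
I would fix a finite generating tuple $\bar a$ of $\mc{A}$, enumerate the terms $t_0,t_1,\dots$ in $|\bar a|$ variables, and for each $n$ let $D_n(\bar x)$ be the conjunction of all atomic and negated atomic formulas in $t_0,\dots,t_n$ true of $\bar a$; then $\bigwedge_n D_n(\bar x)$ is a $\Pi^0_1$ formula expressing ``$\bar x$ realizes the quantifier-free type of $\bar a$'', and any $\bar b$ satisfying it generates a substructure with $\bar b\mapsto\bar a$ extending to an isomorphism $\langle\bar b\rangle\cong\mc{A}$. Write $\Phi$ for the $\Sigma^0_1$-type of $\bar a$ in $\mc{A}$. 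The crux of everything is the observation that, for a substructure $\mc{B}=\langle\bar b\rangle\subseteq\mc{A}$ with $\bar b$ realizing the quantifier-free type of $\bar a$, one has $\mc{B}\preceq_{\Sigma_1}\mc{A}$ if and only if $\bar b$ satisfies no $\Sigma^0_1$ formula in $\mc{A}$ beyond $\Phi$ (the forward direction is immediate from the definition plus the isomorphism; the reverse uses that every tuple of $\mc{B}$ is a term-tuple in $\bar b$ and that $\Sigma^0_1$ formulas always pass upward from $\mc{B}$ to $\mc{A}$). Hence $\mc{A}$ has a proper $\Sigma^0_1$-elementary self-copy precisely when some $\bar b\in\mc{A}$ satisfies the $\Pi^0_1$ formula
\[ \theta(\bar x)\;:=\;\bigwedge_n D_n(\bar x)\;\wedge\;\bigwedge_{\psi\in\Sigma^0_1\setminus\Phi}\neg\psi(\bar x) \]
without generating $\mc{A}$; equivalently, $\mc{A}$ has no proper $\Sigma^0_1$-elementary self-copy if and only if $\theta$ defines the automorphism orbit of $\bar a$ in $\mc{A}$.

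\textbf{Converse direction (building the Scott sentence).} Assuming $\mc{A}$ has no proper $\Sigma^0_1$-elementary self-copy, so that $\theta$ defines the orbit of $\bar a$, the plan is to take as Scott sentence
\[ \Psi\;:=\;\bigl(\exists\bar x\,\theta(\bar x)\bigr)\;\wedge\;\bigl(\forall\bar x\,\bigl[\theta(\bar x)\rightarrow\forall y\,\textstyle\bigvee_i y=t_i(\bar x)\bigr]\bigr), \]
and to check three things. First, $\Psi$ is $\text{d-}\Sigma^0_2$: the first conjunct is $\exists\bar x$ over a $\Pi^0_1$ formula, hence $\Sigma^0_2$; in the second conjunct the implication is $(\neg\theta)\vee(\forall y\,\bigvee_i\cdots)$, a disjunction of a $\Sigma^0_1$ formula and a $\Pi^0_2$ formula, hence $\Pi^0_2$, and prefixing $\forall\bar x$ keeps it $\Pi^0_2$. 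Second, $\mc{A}\models\Psi$: $\theta(\bar a)$ holds, and any $\bar b$ with $\theta(\bar b)$ is in the orbit of $\bar a$, hence generates $\mc{A}$. Third, if $\mc{B}\models\Psi$ then $\mc{B}\cong\mc{A}$: the first conjunct gives $\bar b$ with $\mc{B}\models\theta(\bar b)$, so $\bar b$ realizes the quantifier-free type of $\bar a$ and $\langle\bar b\rangle_{\mc{B}}\cong\mc{A}$; the second conjunct applied to this same $\bar b$ forces $\mc{B}=\langle\bar b\rangle_{\mc{B}}$.

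\textbf{Forward direction (no Scott sentence from a self-copy).} For the other implication I would argue the contrapositive: if $\mc{A}$ has a proper $\Sigma^0_1$-elementary self-copy, given by $e\colon\mc{A}\to\mc{A}$ with $e(\mc{A})\preceq_{\Sigma_1}\mc{A}$ and $e(\mc{A})\subsetneq\mc{A}$, form the direct limit $\mc{A}^{*}=\varinjlim(\mc{A}\xrightarrow{e}\mc{A}\xrightarrow{e}\cdots)$, an increasing union $\bigcup_n\mc{A}_n$ of copies $\mc{A}_n\cong\mc{A}$, each a proper $\Sigma^0_1$-elementary substructure of the next, so $\mc{A}_n\preceq_{\Sigma_1}\mc{A}^{*}$ for every $n$. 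Then $\mc{A}^{*}$ is not finitely generated --- a finite generating set would lie in some $\mc{A}_n$, forcing $\mc{A}^{*}=\mc{A}_n$ --- so $\mc{A}^{*}\not\cong\mc{A}$; yet $\mc{A}^{*}$ models every $\text{d-}\Sigma^0_2$ sentence true in $\mc{A}$. Indeed a $\Sigma^0_2$ sentence true in $\mc{A}_0\cong\mc{A}$ has a witness for its $\Pi^0_1$ matrix in $\mc{A}_0$, and $\Pi^0_1$ formulas are preserved in both directions along $\mc{A}_0\preceq_{\Sigma_1}\mc{A}^{*}$; and a $\Pi^0_2$ sentence true in every $\mc{A}_n$ stays true in $\mc{A}^{*}$ because any tuple of $\mc{A}^{*}$ lies in some $\mc{A}_n$, where the $\Sigma^0_1$ matrix holds and is then preserved along $\mc{A}_n\preceq_{\Sigma_1}\mc{A}^{*}$. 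Hence $\mc{A}$ has no $\text{d-}\Sigma^0_2$ Scott sentence.

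\textbf{Where the difficulty lies.} The assembly of $\Psi$ and the direct-limit computation are essentially bookkeeping; the substance is the key observation and the resulting description of $\theta$ --- in particular the point that the obstruction is not the presence of a proper isomorphic \emph{substructure} (finitely generated structures such as free groups have these) but of one that is additionally $\Sigma^0_1$-\emph{elementary}, equivalently one whose generating tuple acquires no new $\Sigma^0_1$ fact in the larger structure. The one genuinely delicate bit is verifying that $\theta$ really defines the orbit of $\bar a$ under the hypothesis, i.e.\ that a proper substructure copy which fails $\Sigma^0_1$-elementarity must in fact realize some $\Sigma^0_1$ formula outside $\Phi$; once that is in hand, everything else follows. (If one also wants $\Psi$ to be effective relative to a presentation of $\mc{A}$, there is additional bookkeeping about the index sets of the conjunctions in $\theta$, but this is not needed for the stated theorem.)
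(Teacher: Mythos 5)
Your proof is correct, and it matches the approach the paper uses: the paper cites \cite{HTHo} and only sketches the forward direction, which is exactly your direct-limit construction of a non-finitely-generated $\mc{A}^{*}\equiv_2\mc{A}$; for the converse, the orbit-defining $\Pi^0_1$ type of a generating tuple and the sentence $\Psi$ you build are precisely the machinery the authors themselves deploy later (e.g., the ``$\Pi^0_1$ type $p$ of a generating tuple'' in the proof of the quasi-categoricity theorem). One small point of precision: as literally written, $\bigwedge_{\psi\in\Sigma^0_1\setminus\Phi}\neg\psi(\bar x)$ is a conjunction over an uncountable index set (there are continuum many $\Sigma^0_1$ formulas of $\mc{L}_{\omega_1\omega}$), so it is not itself a well-formed $\mc{L}_{\omega_1\omega}$ formula. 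You should instead conjoin the negations of the countably many \emph{basic} existential formulas $\exists\bar y\,\phi(\bar x,\bar y)$ with $\phi$ finitary quantifier-free that fail of $\bar a$ --- that is, take $\theta$ to be the $\Pi^0_1$ type of $\bar a$ --- which is equivalent for your purposes and legitimizes the complexity count. With that replacement, your verification that $\Psi$ is d-$\Sigma^0_2$ (rewriting $\forall\bar x[\theta\to\forall y\bigvee_i y=t_i(\bar x)]$ as $\forall\bar x\forall y(\Sigma^0_1)$) is correct, as is the key lemma that $\langle\bar b\rangle\preceq_{\Sigma_1}\mc{A}$ iff $\bar b$ satisfies no $\Sigma^0_1$ formula beyond the $\Sigma^0_1$ type of $\bar a$.
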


If $\mc{A}$ did contain a copy of itself as a $\Sigma^0_1$-elementary substructure, we produced a structure $\mc{A}^* \equiv_2 \mc{A}$ which is not finitely generated, and hence not isomorphic to $\mc{A}$. From this it follows that $\mc{A}$ has no d-$\Sigma^0_2$ Scott sentence, as any d-$\Sigma^0_2$ sentence true of $\mc{A}$ is also true of $\mc{A}^*$.

A Scott sentence is a description of a structure among countable structures; when dealing with finitely generated structures, it is natural to ask whether a structure has a description among finitely generated structures. This is analogous to quasi finite axiomatizations as defined in \cite{Nies} (not to be confused with the different definition of quasi finite axiomatization in \cite{AhlbrandtZiegler} which is unrelated to finitely generated structures.)

\begin{definition}
Let $\mc{A}$ be a finitely-generated structure. A \emph{quasi Scott sentence} for $\mc{A}$ is an $\mc{L}_{\omega_1 \omega}$ sentence $\varphi$ such that $\mc{A}$ is the unique finitely-generated model of $\varphi$.
\end{definition}

Any Scott sentence is automatically a quasi Scott sentence. As every finitely generated structure has a $\Sigma^0_3$ Scott sentence, they all have a $\Sigma^0_3$ quasi Scott sentence. Every finitely generated structure also has a $\Pi^0_3$ quasi Scott sentence (but there are finitely generated structures with no $\Pi^0_3$ Scott sentence):

\begin{proposition}
Every finitely generated structure has a $\Pi^0_3$ quasi Scott sentence.
\end{proposition}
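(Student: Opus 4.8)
The plan is to produce, for an arbitrary finitely generated structure $\mc{A}$ with generators $\bar{a} = (a_1,\ldots,a_n)$, a $\Pi^0_3$ sentence $\varphi$ whose only finitely generated model (up to isomorphism) is $\mc{A}$. The key observation is that although $\mc{A}$ may fail to have a $\Pi^0_3$ \emph{Scott} sentence, the obstruction identified in Theorem \ref{thm:scott-eq} — containing a proper $\Sigma^0_1$-elementary copy of itself — is irrelevant here, because such a self-embedded copy $\mc{A}^*$ is not finitely generated and hence is automatically excluded by the quasi Scott requirement. So I expect the bound $\Pi^0_3$ to come essentially for free from the standard Scott analysis once we restrict attention to finitely generated models.

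Concretely, first I would fix the generating tuple $\bar{a}$ and write down the $\Sigma^0_1$ type of $\bar{a}$: the computable $\Sigma^0_1$ formula $\theta(\bar{x})$ that is the disjunction over all atomic or negated-atomic... more precisely, recall (this is the standard fact used in \cite{HTHo}) that for a finitely generated structure the existential ($\Sigma^0_1$) type of the generating tuple, call it $\Theta(\bar{x}) = \bigwedge_i \theta_i(\bar{x})$ with each $\theta_i$ a $\Sigma^0_1$ formula, together with the assertion that $\bar{x}$ generates the structure, pins $\mc{A}$ down among finitely generated structures. The sentence I would write is
\[
\varphi \;\equiv\; \exists \bar{x}\,\Bigl[\bigwedge_i \theta_i(\bar{x}) \;\wedge\; \gamma(\bar{x})\Bigr],
\]
where $\gamma(\bar{x})$ is the ($\Pi^0_2$, in fact) sentence saying ``every element is a term in $\bar{x}$'' — i.e., $\forall y\, \bigvee_{t} (y = t(\bar{x}))$ over all terms $t$ in the language. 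The conjunction $\bigwedge_i \theta_i \wedge \gamma$ is $\Pi^0_2$ (each $\theta_i$ is $\Sigma^0_1 \subseteq \Pi^0_2$, and $\gamma$ is $\Pi^0_2$), so $\exists \bar{x}[\cdots]$ is $\Sigma^0_3$ — not yet good enough. To get $\Pi^0_3$ instead, I would rewrite it in the standard way: since $\gamma(\bar{x})$ forces $\bar{x}$ to be a generating tuple, and all such tuples realizing $\Theta$ are automorphic to $\bar{a}$ in any model satisfying the whole sentence, one can replace the outer $\exists$ by a $\Pi^0_3$ combination. Specifically, use
\[
\varphi \;\equiv\; \exists \bar{x}\,\gamma(\bar{x}) \;\wedge\; \forall \bar{x}\,\bigl(\gamma(\bar{x}) \rightarrow \bigwedge_i \theta_i(\bar{x})\bigr),
\]
where the first conjunct is $\Sigma^0_3$... again this is not literally $\Pi^0_3$. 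The honest move, which I expect the authors take, is: $\exists\bar x\,\gamma(\bar x)$ is equivalent over finitely generated structures to a $\Pi^0_2$-ish condition because in a finitely generated model a generating tuple of the right length exists iff the model has a term-generating tuple, which can be phrased with a bounded search; and $\forall\bar x(\gamma(\bar x)\to\bigwedge_i\theta_i(\bar x))$ is $\Pi^0_3$ since $\gamma$ is $\Pi^0_2$ and each $\theta_i$ is $\Sigma^0_1$, so the implication is $\Sigma^0_2 \vee \Sigma^0_1 = \Sigma^0_2$ inside a $\forall$, giving $\Pi^0_3$. Combining a $\Pi^0_2$ and a $\Pi^0_3$ conjunct yields $\Pi^0_3$.

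Then I would verify correctness: (i) $\mc{A} \models \varphi$, taking $\bar{x} = \bar{a}$ for the existential and noting any generating tuple of $\mc{A}$ realizes the $\Sigma^0_1$ type $\Theta$ of $\bar a$ by homogeneity of finitely generated structures at generating tuples; and (ii) if $\mc{B}$ is finitely generated and $\mc{B} \models \varphi$, pick $\bar{b} \in \mc{B}$ with $\mc{B} \models \gamma(\bar{b})$ — this exists by the first conjunct, and makes $\bar b$ a generating tuple of $\mc{B}$ — then by the second conjunct $\mc{B} \models \bigwedge_i \theta_i(\bar b)$, so $\bar b$ satisfies every $\Sigma^0_1$ formula in $\Theta$; a standard back-and-forth / term-by-term argument then produces a surjective homomorphism $\mc{A} \to \mc{B}$ sending $\bar a \mapsto \bar b$ which is in fact an isomorphism (the $\Sigma^0_1$ type controls all atomic facts and their negations about terms in the generators — negations come in via the fact that $\Theta$, being the full $\Sigma^0_1$ type, includes for each false atomic fact $\psi(t(\bar x))$ about a term the $\Sigma^0_1$ statement witnessing inequality/failure, or one handles this by noting $\mc{A}$ embeds $\Sigma^0_1$-elementarily into $\mc{B}$ and, both being generated by corresponding tuples, the embedding is onto). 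The main obstacle is exactly this last point — ensuring the $\Sigma^0_1$ type of the generating tuple, rather than the full elementary diagram, suffices to force an isomorphism among finitely generated models; but this is precisely the content of the machinery already developed in \cite{HTHo} for $\Sigma^0_3$ Scott sentences of finitely generated structures, so I would cite that and only point out that the $\Pi^0_3$ rephrasing above is available because we need not, and indeed cannot in general, rule out non-finitely-generated models.
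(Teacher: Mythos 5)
There is a genuine gap. Your sentence contains the conjunct $\exists\bar x\,\gamma(\bar x)$, where $\gamma$ is the $\Pi^0_2$ assertion that $\bar x$ generates the structure; this conjunct is $\Sigma^0_3$, and the proposed fix --- that ``$\exists\bar x\,\gamma(\bar x)$ is equivalent over finitely generated structures to a $\Pi^0_2$-ish condition [by] a bounded search'' --- is not correct. There is no uniform $\Pi^0_2$ (or even d-$\Sigma^0_2$) reformulation of ``there exists a generating $n$-tuple''; the existential over tuples is essential and puts you at $\Sigma^0_3$, so as written the sentence is $\Sigma^0_3\wedge\Pi^0_3$, not $\Pi^0_3$.

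The paper sidesteps this by never asserting the existence of a generating tuple in the syntax at all. Let $p(\bar x)$ be the \emph{atomic} type of the fixed generating tuple $\bar a$ (a $\Pi^0_1$ formula; it already includes all negated atomic facts, so you don't need the full $\Sigma^0_1$ type for the back-and-forth at the end). The quasi Scott sentence is
\[
\bigwedge_{n}(\forall y_1,\ldots,y_n)(\exists\bar x)\bigl[\,y_1,\ldots,y_n\in\la\bar x\ra\ \wedge\ p(\bar x)\,\bigr].
\]
Here $y_1,\ldots,y_n\in\la\bar x\ra$ is $\Sigma^0_1$, so the matrix after $\exists\bar x$ is $\Sigma^0_2$, giving $\Pi^0_3$ overall. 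This says: for every finite tuple $\bar y$, some realization $\bar x$ of $p$ generates $\bar y$. Finite generation of a candidate model $\mc B$ is then used only semantically: instantiate $\bar y$ at a generating tuple of $\mc B$, obtain $\bar b'$ with $p(\bar b')$ generating that tuple and hence all of $\mc B$, and conclude $\mc A\cong\mc B$ because a generating tuple's atomic type pins down the structure. The idea you are missing is precisely this swap --- quantify universally over arbitrary tuples and only demand that \emph{something} realizing $p$ generates them, rather than existentially asserting a generating witness up front.
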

\begin{proof}
Let $\mc{A}$ be a finitely generated structure generated by a tuple $\bar{a}$. Let $p(\bar{x})$ be the atomic type of $\bar{a}$. Then the sentence
\[ \bigwedge_{n}(\forall y_1,\ldots,y_n)(\exists \bar{x}) [y_1,\ldots,y_n \in \la \bar{x} \ra \wedge p(\bar{x})] \]
is a $\Pi^0_3$ quasi Scott sentence for $\mc{A}$. It is clear that this sentence is true of $\mc{A}$. If $\mc{B}$ a finitely generated structure satisfying this sentence, then $\mc{B}$ is generated by a tuple $\bar{b}$; thus there must be some tuple $\bar{b}'$ such that $\bar{b} \in \la \bar{b}' \ra$ and $p(\bar{b}')$. Thus $\bar{b}'$ generates $\mc{B}$, and since $\bar{b}'$ satisfies the atomic type $p$, $\mc{B}$ must be isomorphic to $\mc{A}$.
\end{proof}

Unlike with Scott sentences, we do not have a classification of the structures with a d-$\Sigma^0_2$ quasi Scott sentence. Instead, it seems to be more natural to look at the 2-theory of a structure.

\begin{definition}
Let $\mc{A}$ be a countable structure. The \emph{2-theory} of $\mc{A}$, $\Th_2(\mc{A})$, is the set of $\Sigma^0_2$ and $\Pi^0_2$ sentences true of $\mc{A}$.
\begin{itemize}
	\item If $\mc{A}$ is the only countable model of $\Th_2(\mc{A})$, then we say that $\Th_2(\mc{A})$ is \emph{countably categorical}.
	\item If $\mc{A}$ is finitely generated and the only finitely generated model of $\Th_2(\mc{A})$, then we say that $\Th_2(\mc{A})$ is \emph{quasi-categorical}.
\end{itemize}
\end{definition}
Note that if $\mc{A}$ is finitely generated, every $\Sigma^0_2$ formula in $\Th_2(\mc{A})$ is entailed by a single $\Sigma^0_2$ formula---the one which says that there is a substructure isomorphic to $\mc{A}$.


We have a complete classification of when a structure's 2-theory is quasi-categorical.

\begin{theorem}\label{thm:quasi-cat-eq}
Let $\mc{A}$ be a finitely generated structure. The following are equivalent:
\begin{enumerate}
	\item $\mc{A}$ has a $\Sigma^0_1$-elementary finitely generated substructure $\mc{B} \ncong \mc{A}$ which contains a $\Sigma^0_1$-elementary substructure isomorphic to $\mc{A}$.
	\item $\Th_2(\mc{A})$ is not quasi-categorical.
\end{enumerate}
\end{theorem}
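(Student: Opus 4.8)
The plan is to prove the two implications separately, mirroring the structure of the proof of Theorem \ref{thm:scott-eq} from \cite{HTHo}. For the direction $(1) \Rightarrow (2)$, suppose $\mc{A}$ has a $\Sigma^0_1$-elementary finitely generated substructure $\mc{B} \ncong \mc{A}$ which itself contains a $\Sigma^0_1$-elementary copy of $\mc{A}$. The key point is that $\mc{B}$, being $\Sigma^0_1$-elementarily equivalent to $\mc{A}$ and containing a $\Sigma^0_1$-elementary copy of $\mc{A}$, will satisfy $\Th_2(\mc{A})$: every $\Sigma^0_2$ sentence true of $\mc{A}$ is witnessed inside the copy of $\mc{A}$ sitting $\Sigma^0_1$-elementarily inside $\mc{B}$, and every $\Pi^0_2$ sentence true of $\mc{A}$ passes down to the $\Sigma^0_1$-elementary substructure $\mc{B}$ (since a $\Pi^0_2$ sentence $\forall \bar x \exists \bar y\, \psi$ with $\psi$ quantifier-free, or more carefully $\Sigma^0_1$, restricts along $\Sigma^0_1$-elementary embeddings: given $\bar b \in \mc{B}$, find witnesses in $\mc{A}$, then pull them back into $\mc{B}$ using that the copy of $\mc{A}$ in $\mc{B}$ is $\Sigma^0_1$-elementary — one has to chase the three structures $\mc{A} \preceq_1 \mc{B} \preceq_1 \mc{A}$ through each other). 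So $\mc{B}$ is a finitely generated model of $\Th_2(\mc{A})$ not isomorphic to $\mc{A}$, witnessing that $\Th_2(\mc{A})$ is not quasi-categorical.

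For $(2) \Rightarrow (1)$, suppose $\Th_2(\mc{A})$ is not quasi-categorical, so there is a finitely generated $\mc{B} \models \Th_2(\mc{A})$ with $\mc{B} \ncong \mc{A}$. Since the $\Sigma^0_2$ sentence asserting ``there is a substructure isomorphic to $\mc{A}$'' (using that $\mc{A}$ is finitely generated, so isomorphism to $\mc{A}$ is a $\Sigma^0_2$ property of a generating tuple — one says there exist elements generating a substructure whose generators satisfy the atomic type of the generators of $\mc{A}$ and such that every element is generated, the latter being the $\Pi^0_1$-in-parameters part) is in $\Th_2(\mc{A})$, $\mc{B}$ contains a copy $\mc{A}_0$ of $\mc{A}$. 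The task is to upgrade this to a $\Sigma^0_1$-elementary chain $\mc{A}_0 \preceq_1 \mc{B}$, and dually to realize $\mc{B}$ $\Sigma^0_1$-elementarily inside a copy of $\mc{A}$. The first follows because $\mc{A} \models \Pi^0_2$-sentences forcing that any substructure isomorphic to $\mc{A}$ (via the atomic type of its generators) is $\Sigma^0_1$-elementary: the relevant $\Pi^0_2$ schema says, for each existential formula $\exists \bar y\, \theta(\bar x, \bar y)$ in the language, that whenever $\bar x$ is inside a substructure generated by a tuple realizing the atomic type of $\mc{A}$'s generators and $\exists \bar y\, \theta$ holds, it already holds with witnesses in that substructure — this is a $\Pi^0_2$ sentence true of $\mc{A}$ (witnesses are terms in $\mc{A}$'s generators) hence true of $\mc{B}$, giving $\mc{A}_0 \preceq_1 \mc{B}$. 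Then, since $\mc{B} \models \Th_2(\mc{A})$ and $\mc{A}$ is the substructure $\mc{A}_0$, applying the analogous argument with the roles reversed — $\mc{B}$'s finitely many generators, sitting inside $\mc{A}$ via $\mc{B} \models \Sigma^0_2(\mc{A})$ — would be backwards; instead I would argue that $\mc{A}$ itself, being a model of $\Th_2(\mc{B})$? No: one uses that $\mc{A}$ and $\mc{B}$ have the same $\Sigma^0_1$-elementary-substructure-and-extension behavior encoded in $\Th_2$, so symmetrically $\mc{B}$ embeds $\Sigma^0_1$-elementarily into a structure satisfying $\Th_2(\mc{B}) \supseteq$ the relevant sentences, which one identifies with a copy of $\mc{A}$ by finite generation.

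The main obstacle I expect is the bookkeeping in $(2)\Rightarrow(1)$ showing that $\mc{B}$ sits $\Sigma^0_1$-elementarily inside a copy of $\mc{A}$, rather than merely that $\mc{A}$ sits $\Sigma^0_1$-elementarily inside $\mc{B}$; the two are not symmetric a priori because $\Th_2(\mc{A})$ only directly constrains models from ``$\mc{A}$'s side.'' The fix is to observe that the hypothesis $\mc{B} \models \Th_2(\mc{A})$ together with $\mc{A} \hookrightarrow \mc{B}$ and finite generation of both forces a back-and-forth-like configuration: iterate the embedding $\mc{A} \hookrightarrow \mc{B}$ and the reverse $\Sigma^0_1$-elementary embedding $\mc{B} \preceq_1 (\text{copy of } \mc{A})$ — the latter obtained because any $\Sigma^0_2$ sentence describing the existence of a finitely generated $\Sigma^0_1$-extension with prescribed atomic type of generators that is true of $\mc{B}$ is, by $\mc{B} \models \Th_2(\mc{A})$, true of $\mc{A}$. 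Once both directions of $\Sigma^0_1$-elementary embedding are in hand, the three-structure configuration $\mc{A} \preceq_1 \mc{B} \preceq_1 \mc{A}$ with $\mc{B} \ncong \mc{A}$ is exactly condition (1). I would also flag that one must check the ``$\Sigma^0_1$-elementary'' upgrades carefully: the relevant schemas are genuinely $\Pi^0_2$, not higher, which is what keeps everything inside $\Th_2$, and this is where the finite generation of $\mc{A}$ is essential — it lets ``being a copy of $\mc{A}$'' be captured by the atomic type of a finite generating tuple plus a $\Pi^0_1$ generation clause.
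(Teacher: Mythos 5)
Your direction $(1) \Rightarrow (2)$ is correct, and in fact more direct than the paper's argument: the paper builds an infinite $\Sigma^0_1$-elementary chain $\mc{A}_1 \prec_{\Sigma^0_1} \mc{B}_1 \prec_{\Sigma^0_1} \cdots$, takes the union $\mc{C}$, and argues $\mc{A} \equiv_2 \mc{C} \equiv_2 \mc{B}$, whereas you just verify $\mc{B} \models \Th_2(\mc{A})$ directly from the configuration $\mc{A}' \preceq_{\Sigma^0_1} \mc{B} \preceq_{\Sigma^0_1} \mc{A}$. (You do not actually need the ``three-structure chase'' for the $\Pi^0_2$ part: a $\Pi^0_2$ sentence $\forall \bar{x}\, \sigma(\bar{x})$ with $\sigma$ $\Sigma^0_1$ goes straight down from $\mc{A}$ to the $\Sigma^0_1$-elementary substructure $\mc{B}$, since for $\bar{b} \in \mc{B}$, $\mc{A} \models \sigma(\bar{b})$ and $\mc{B} \preceq_{\Sigma^0_1} \mc{A}$ give $\mc{B} \models \sigma(\bar{b})$.)

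The direction $(2) \Rightarrow (1)$ has a genuine gap. Your $\Pi^0_2$ schema — ``whenever $\bar{z}$ realizes the atomic type of $\mc{A}$'s generators, the substructure $\langle \bar{z} \rangle$ is $\Sigma^0_1$-elementary'' — is of the right complexity (distributing the universal quantifier over the witness tuple $\bar{y}$ gives a $\Pi^0_2$ sentence), but it is simply \emph{false} of $\mc{A}$ in general, and the parenthetical justification ``witnesses are terms in $\mc{A}$'s generators'' does not work: those witnesses are terms in the \emph{fixed} generating tuple $\bar{a}$, not in $\bar{z}$, so they need not lie in $\langle \bar{z} \rangle$. A concrete counterexample: take $\mc{A} = (\mathbb{N}, s)$ with successor $s$, generated by $0$. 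The element $1$ has the same atomic type as $0$, and $\langle 1 \rangle = \{1,2,\ldots\}$ is isomorphic to $\mc{A}$, but it is not $\Sigma^0_1$-elementary in $\mc{A}$: the $\Sigma^0_1$ formula $\exists y\, (s(y)=x)$ holds of $1$ in $\mc{A}$ but fails in $\langle 1 \rangle$. The fix is the paper's: instead of the atomic type, let $p(\bar{x})$ be the full $\Pi^0_1$ type of a generating tuple of $\mc{A}$. The sentence $(\exists \bar{x})\, p(\bar{x})$ is $\Sigma^0_2$ and lies in $\Th_2(\mc{A})$, so $\mc{B} \models (\exists \bar{x})\, p(\bar{x})$; a realization $\bar{g} \in \mc{B}$ generates a substructure that is isomorphic to $\mc{A}$ (because $p$ determines the atomic type) \emph{and} $\Sigma^0_1$-elementary in $\mc{B}$ (because for any $\Sigma^0_1$ formula $\psi$ and term tuple $t$ with $\mc{A} \not\models \psi(t(\bar{a}))$, the $\Pi^0_1$ formula $\neg\psi(t(\bar{x}))$ lies in $p$, forcing $\mc{B} \not\models \psi(t(\bar{g}))$). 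Thus a single $\Sigma^0_2$ sentence delivers both the copy of $\mc{A}$ and the $\Sigma^0_1$-elementarity; no separate $\Pi^0_2$ upgrade schema is needed or available. By symmetry, since $\mc{A} \equiv_2 \mc{B}$, the same argument puts a $\Sigma^0_1$-elementary copy of $\mc{B}$ inside $\mc{A}$, and composing gives exactly configuration $(1)$.
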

\begin{proof}
Given (1), each structure isomorphic to $\mc{A}$ is contained as a $\Sigma^0_1$-elementary substructure of a structure isomorphic to $\mc{B}$, and vice versa. So we can build a chain
\[ \mc{A}_1 \prec_{\Sigma^0_1} \mc{B}_1 \prec_{\Sigma^0_1} \mc{A}_2 \prec_{\Sigma^0_1} \mc{B}_2 \prec_{\Sigma^0_1} \cdots\]
where each $\mc{A}_i$ is isomorphic to $\mc{A}$, and each $\mc{B}_i$ is isomorphic to $\mc{B}$. Let $\mc{C}$ be the union of this chain. Then $\mc{A} \equiv_2 \mc{C}$ and $\mc{B} \equiv_2 \mc{C}$, so that $\mc{A} \equiv_2 \mc{B}$. Thus $\Th_2(\mc{A})$ is not quasi-categorical.

Suppose that $\Th_2(\mc{A})$ is not quasi-categorical, and let $\mc{B}$ be a non-isomorphic finitely generated structure such that $\mc{B} \equiv_2 \mc{A}$. Let $p$ be the $\Pi^0_1$ type of a generating tuple for $\mc{A}$. Then $\mc{A} \models (\exists \bar{x}) p(\bar{x})$, so $\mc{B} \models (\exists \bar{x}) p(\bar{x})$. Let $\bar{g} \in \mc{B}$ realize $p$. Then $\bar{g}$ generates a copy of $\mc{A}$ which is a $\Sigma^0_1$-elementary substructure of $\mc{B}$. The same argument shows that $\mc{A}$ contains a copy of $\mc{B}$ as a $\Sigma^0_1$-elementary substructure.
\end{proof}

We can use a similar argument to give sufficient, but not necessary, conditions for a structure to have a d-$\Sigma^0_2$ quasi Scott sentence.

\begin{theorem}
Let $\mc{A}$ be a finitely generated structure. Suppose that $\mc{A}$ is contained as a $\Sigma^0_1$-elementary substructure within only countably many (up to isomorphism) finitely generated structures, none of which (other than $\mc{A}$ itself) is a $\Sigma^0_1$-elementary substructure of $\mc{A}$. Then $\mc{A}$ has a d-$\Sigma^0_2$ quasi Scott sentence.
\end{theorem}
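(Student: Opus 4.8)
The plan is to combine the idea behind Theorem \ref{thm:quasi-cat-eq} with the characterization in Theorem \ref{thm:scott-eq}. By hypothesis, the finitely generated structures containing $\mc{A}$ as a $\Sigma^0_1$-elementary substructure form a countable list $\mc{A} = \mc{B}_0, \mc{B}_1, \mc{B}_2, \ldots$ (up to isomorphism), and none of $\mc{B}_1, \mc{B}_2, \ldots$ contains a copy of $\mc{A}$ as a $\Sigma^0_1$-elementary substructure. In particular, for each $i \geq 1$, $\mc{B}_i \not\cong \mc{A}$, and moreover $\mc{B}_i$ is not $\Sigma^0_1$-elementarily embeddable into $\mc{A}$; by Theorem \ref{thm:scott-eq} applied in the appropriate form, the fact that $\mc{A}$ does not contain a copy of itself as a \emph{proper} $\Sigma^0_1$-elementary substructure --- which follows since $\mc{A}$ is on the list but appears there only as itself --- gives that $\mc{A}$ has a d-$\Sigma^0_2$ Scott sentence $\sigma$. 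I would first record this: $\sigma$ already rules out every finitely generated structure not on the list, since any finitely generated $\mc{C}$ with $\mc{C} \equiv_2 \mc{A}$ contains a $\Sigma^0_1$-elementary copy of $\mc{A}$ (realize the $\Pi^0_1$-type of a generating tuple, as in the proof of Theorem \ref{thm:quasi-cat-eq}), hence $\mc{C} \cong \mc{B}_i$ for some $i$.

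Next I would separate $\mc{A}$ from each $\mc{B}_i$ with $i \geq 1$ using a single $\Pi^0_2$ sentence. For each such $i$, since $\mc{B}_i \not\equiv_2 \mc{A}$ would already be handled by $\sigma$, the only problematic case is $\mc{B}_i \equiv_2 \mc{A}$; but then, running the argument of Theorem \ref{thm:quasi-cat-eq} (or Theorem \ref{thm:scott-eq}), $\mc{A}$ would contain a $\Sigma^0_1$-elementary copy of $\mc{B}_i$, and $\mc{B}_i$ would contain a $\Sigma^0_1$-elementary copy of $\mc{A}$, contradicting the hypothesis that $\mc{B}_i$ (for $i \geq 1$) has no $\Sigma^0_1$-elementary copy of $\mc{A}$. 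So in fact $\mc{B}_i \not\equiv_2 \mc{A}$ for every $i \geq 1$: there is a $\Pi^0_2$ or $\Sigma^0_2$ sentence true of $\mc{A}$ and false of $\mc{B}_i$. Since $\mc{A}$ is finitely generated, every $\Sigma^0_2$ sentence true of $\mc{A}$ is entailed by the single $\Sigma^0_2$ sentence $\tau$ asserting the existence of a substructure isomorphic to $\mc{A}$ (noted in the text after the definition of the $2$-theory); so for those $i$ separated by a $\Sigma^0_2$ sentence, $\tau$ already does the job. For the remaining $i$ we get $\Pi^0_2$ sentences $\pi_i$ with $\mc{A} \models \pi_i$ and $\mc{B}_i \not\models \pi_i$; set $\pi = \bigwedge_i \pi_i$, which is again $\Pi^0_2$.

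Finally I would assemble the quasi Scott sentence as $\varphi := \tau \wedge \pi$, together with $\sigma$ if needed --- more precisely, as the d-$\Sigma^0_2$ sentence $\tau \wedge \pi'$ where $\pi'$ is a $\Pi^0_2$ conjunction incorporating both $\pi$ and the $\Pi^0_2$ part of $\sigma$. Then $\mc{A} \models \varphi$. Conversely, if $\mc{C}$ is finitely generated and $\mc{C} \models \varphi$, then $\mc{C} \models \tau$ gives a $\Sigma^0_1$-elementary copy of $\mc{A}$ inside $\mc{C}$ (the substructure generated by a realization of the $\Pi^0_1$-type of a generating tuple of $\mc{A}$ is automatically $\Sigma^0_1$-elementary), so $\mc{C}$ is one of the $\mc{B}_i$; but $\mc{C} \models \pi$ rules out every $\mc{B}_i$ with $i \geq 1$, so $\mc{C} \cong \mc{A}$. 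I expect the main obstacle to be the bookkeeping in the second paragraph: verifying cleanly that the hypothesis "$\mc{B}_i$ contains no $\Sigma^0_1$-elementary copy of $\mc{A}$" forces $\mc{B}_i \not\equiv_2 \mc{A}$ (so that the separating sentences actually exist at the $\Sigma^0_2 / \Pi^0_2$ level), and then confirming that conjoining countably many $\Pi^0_2$ sentences stays $\Pi^0_2$ and that the whole thing collapses to d-$\Sigma^0_2$ once the single $\Sigma^0_2$ sentence $\tau$ absorbs the existential content.
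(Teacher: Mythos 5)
Your first paragraph contains a genuine gap: you assert that the hypothesis gives $\mc{A}$ a d-$\Sigma^0_2$ Scott sentence $\sigma$ via Theorem \ref{thm:scott-eq}, on the grounds that ``$\mc{A}$ is on the list but appears there only as itself.'' This does not follow. The list $\mc{B}_0, \mc{B}_1, \ldots$ is a list of isomorphism types of finitely generated structures that contain $\mc{A}$ as a $\Sigma^0_1$-elementary substructure; that $\mc{A}$ appears only once on this list (as $\mc{B}_0$) says nothing about whether $\mc{A}$ contains a \emph{proper} copy of itself as a $\Sigma^0_1$-elementary substructure, which is what Theorem \ref{thm:scott-eq} requires you to rule out. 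The hypothesis only excludes the $\mc{B}_i$ with $i\geq 1$, i.e.\ the structures \emph{not} isomorphic to $\mc{A}$, from being $\Sigma^0_1$-elementary substructures of $\mc{A}$. Indeed, if the hypothesis did imply a d-$\Sigma^0_2$ Scott sentence, this theorem would be vacuous, since every Scott sentence is automatically a quasi Scott sentence; the point of the theorem is to handle structures like the one in Theorem \ref{thm:first-ex}, which has a d-$\Sigma^0_2$ quasi Scott sentence but no d-$\Sigma^0_2$ Scott sentence. There is also a directional slip in your second paragraph: the contradiction comes from ``$\mc{A}$ would contain a $\Sigma^0_1$-elementary copy of $\mc{B}_i$,'' which is what the theorem's hypothesis forbids, not from ``$\mc{B}_i$ would contain a $\Sigma^0_1$-elementary copy of $\mc{A}$,'' which is true by the definition of the $\mc{B}_i$ and hence no contradiction at all.

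That said, the damage is contained: the Scott sentence $\sigma$ is never actually used in your final verification, so you can simply delete the first paragraph. Once that is done, your argument correctly recovers the paper's proof, though by a more roundabout route. You derive $\mc{B}_i\not\equiv_2 \mc{A}$ and then abstractly extract a $\Pi^0_2$ separating sentence $\pi_i$ (your ``Case 1'' with a $\Sigma^0_2$ separator never arises, because $\Sigma^0_2$ sentences true of $\mc{A}$ are automatically true of any $\mc{B}_i\succeq_{\Sigma^0_1}\mc{A}$). The paper skips the $\equiv_2$ argument entirely: it takes $\pi_i$ to be the explicit $\Pi^0_2$ sentence $\neg(\exists\bar x)\,q_{\mc{B}_i}(\bar x)$, where $q_{\mc{B}_i}$ is the $\Pi^0_1$ type of a generating tuple of $\mc{B}_i$. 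This is true in $\mc{A}$ precisely because of the theorem's hypothesis (otherwise $\mc{B}_i$ would embed $\Sigma^0_1$-elementarily in $\mc{A}$) and obviously false in $\mc{B}_i$. Your final sentence $\tau\wedge\bigwedge_i\pi_i$ and the paper's $(\exists\bar x)\,p(\bar x)\wedge\bigwedge_i\neg(\exists\bar x)\,q_{\mc{B}_i}(\bar x)$ are the same construction.
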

\begin{proof}
Let $p$ be the $\Pi^0_1$ type of a generating tuple for $\mc{A}$. Let $\mc{C}$ be the collection of finitely generated structures, not isomorphic to $\mc{A}$, which contain $\mc{A}$ as a $\Sigma^0_1$-elementary substructure. For each $\mc{B} \in \mc{C}$, let $q_{\mc{B}}$ be the $\Pi^0_1$ type of a generating tuple from $\mc{B}$. Then $(\exists \bar{x}) p(\bar{x}) \wedge \bigdoublewedge_{\mc{B} \in \mc{C}} \neg (\exists \bar{x}) q_{\mc{B}}(\bar{x})$ is a quasi Scott sentence for $\mc{A}$.
\end{proof}

\begin{question}
Give a complete classification of the finitely generated structure which have a d-$\Sigma^0_2$ quasi Scott sentence.
\end{question}

\subsection{Quasi Scott Sentences and Bi-Interpretations}

In the next section, we will give some examples of finitely generated structures with various types of quasi Scott sentences. In this section, we prove the results required to use the universality of finitely generated groups to turn these examples into finitely generated groups.

To begin, we need a couple of lemmas about interpretations. The following lemma is well-known, even in the weaker setting of Turing computable embeddings (see \cite{KnightMillerVandenBoom07}). We give a sketch of the proof in the setting of interpretations.

\begin{lemma}\label{lem:reduc}
Let $F$ be a computable functor from a class $\mc{C}$ to a class $\mc{D}$. Let $\varphi$ be a sentence in the language of $\mc{D}$. Then there is a sentence $\psi$ such that for $\mc{A} \in \mc{C}$, $\mc{A} \models \psi$ if and only if $F(\mc{A}) \models \varphi$. Moreover, $\psi$ is the same complexity ($\Sigma^0_\alpha$, $\Pi^0_\alpha$, or d-$\Sigma^0_\alpha$) as $\varphi$.
\end{lemma}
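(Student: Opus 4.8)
The plan is to exploit the fact that a computable functor $F$, by the main theorem of \cite{HTMelnikovMillerMontalban}, is induced by an effective interpretation of $F(\mc{A})$ inside $\mc{A}$: there is a $\Delta^{\comp}_1$-definable domain $\Dom \subseteq \mc{A}^{<\omega}$, an equivalence relation $\sim$, and relations $R_i$, all defined by computable $\mc{L}_{\omega_1\omega}$ formulas not depending on $\mc{A}$, which present a copy of $F(\mc{A})$. The idea is then simply to \emph{relativize} $\varphi$ through this interpretation: replace quantifiers over $F(\mc{A})$ by quantifiers over $\Dom$ (i.e.\ bounded quantifiers over tuples from $\mc{A}$ of unbounded but finite length, which is a countable disjunction over the arities), replace the atomic formulas $R_i(\bar{z}_1,\ldots,\bar{z}_k)$ and equalities $\bar{z}_1 = \bar{z}_2$ by the defining formulas for $R_i$ and $\sim$, and leave the Boolean structure and the infinitary conjunctions/disjunctions of $\varphi$ untouched. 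The resulting sentence $\psi$ in the language of $\mc{C}$ then satisfies $\mc{A} \models \psi \iff F(\mc{A}) \models \varphi$ by induction on the structure of $\varphi$.

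The key steps, in order, are: (1) invoke \cite{HTMelnikovMillerMontalban} to get the effective interpretation underlying $F$, with its defining formulas $\delta_{\Dom}$, $\delta_\sim$, $\delta_{R_i}$, all computable $\mc{L}_{\omega_1\omega}$ and uniform in $\mc{A}$; (2) define the translation $\varphi \mapsto \varphi^*$ recursively --- for a quantifier $\exists y\, \theta$ in $\varphi$, $\theta^*$ has a free variable ranging over a single element of $F(\mc{A})$, so we write $\bigvee_{n} \exists \bar{x}\,[\delta_{\Dom}(\bar{x}) \wedge \theta^*]$ where $\bar{x}$ ranges over $n$-tuples (and dually for $\forall$); atomic subformulas get replaced by the corresponding $\delta$'s, with $\sim$ used in place of equality; (3) check by induction that $\varphi^*$ is satisfied by $\mc{A}$ exactly when $\varphi$ is satisfied by the interpreted structure $\cong F(\mc{A})$; (4) track the complexity. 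Since $\Dom$, $\sim$, and each $R_i$ are $\Delta^{\comp}_1$, hence both $\Sigma^{\comp}_1$ and $\Pi^{\comp}_1$, the relativization of an atomic formula can be absorbed into either a $\Sigma$ or a $\Pi$ block as needed; each quantifier in $\varphi$ is replaced by a single (infinitary disjunction of) quantifier block in $\psi$; and bounded/existential quantifiers over finite tuples do not raise the level. So a $\Sigma^0_\alpha$ (resp.\ $\Pi^0_\alpha$, d-$\Sigma^0_\alpha$) sentence $\varphi$ yields a $\Sigma^0_\alpha$ (resp.\ $\Pi^0_\alpha$, d-$\Sigma^0_\alpha$) sentence $\psi$, possibly after routine normal-form manipulations (collapsing adjacent quantifiers of the same type, pushing the $\Delta^{\comp}_1$ definitions to the appropriate side).

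The main obstacle, and the only place requiring care, is step (4): the bookkeeping to ensure the complexity does not creep up. Two subtleties arise. First, the domain is a set of \emph{tuples of arbitrary finite length}, so a single first-order quantifier over $F(\mc{A})$ becomes a countable disjunction (for $\exists$) or conjunction (for $\forall$) over the tuple length $n$, together with a block of $n$ ordinary quantifiers; one must confirm this keeps the sentence computable and at the same level --- it does, since a disjunction of $\Sigma^0_\alpha$ formulas is $\Sigma^0_\alpha$ and this is exactly the closure built into the definition of the hierarchy. Second, one must handle the interaction of the $\Delta^{\comp}_1$ defining formulas with the outermost block: for instance in a $\Sigma^0_1$ sentence $\exists \bar{y}\,(\text{qf})$ of $\varphi$, after relativization the quantifier-free matrix becomes a $\Delta^{\comp}_1$ (hence in particular $\Sigma^{\comp}_1$) formula about $\mc{A}$, and $\exists\bar{x}\,[\delta_{\Dom} \wedge \Sigma^{\comp}_1]$ is still $\Sigma^0_1$; similarly at higher levels one uses that the $\Delta^{\comp}_1$ definitions can be presented as $\Pi_\beta$ when they need to sit inside a $\Pi$ block. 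Since all of this is standard for model-theoretic interpretations and the effective version only adds the tuple-length disjunctions, a proof sketch recording these two points and the inductive translation suffices; the authors indeed state they will only give a sketch.
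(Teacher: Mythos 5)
Your proposal is correct and follows essentially the same route as the paper's proof sketch: invoke the result of \cite{HTMelnikovMillerMontalban} that the computable functor $F$ is induced (uniformly) by an effective interpretation, then obtain $\psi$ by relativizing $\varphi$ to the $\Delta^{\comp}_1$-definable domain of the interpretation and substituting the $\Delta^{\comp}_1$ definitions for the symbols in the language of $\mc{D}$, observing that a quantifier over the interpreted structure becomes a countable disjunction (or conjunction) over tuple lengths and that the $\Delta^{\comp}_1$ definitions can be taken $\Sigma^{\comp}_1$ or $\Pi^{\comp}_1$ as needed to avoid raising the level. The only extra content you supply beyond the paper's one-paragraph sketch is the careful bookkeeping of the tuple-length disjunctions and the absorption of the $\Delta^{\comp}_1$ formulas into the outer block, both of which are the right points to flag.
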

\begin{proof}[Proof sketch]
Each structure $F(\mc{A})$ is interpretable in $\mc{A}$, and the formulas in the interpretations are independent of $\mc{A}$. The formula $\psi$ is obtained by modifying $\varphi$ by relativizing each quantifier to the domain of the interpretation, which is $\Delta^{\comp}_1$-definable, and by replacing each symbol in the language of $F(\mc{A})$ by its $\Delta^{\comp}_1$ definition in the language of $\mc{A}$. Thus the complexity of the sentence is maintained. See \cite[Theorem 5.3.2]{Hodges} for a more detailed proof in the context of interpretations in (finitary) elementary first-order logic.
\end{proof}

This next lemma says that if we have a reduction of the class of $\mc{L}$-structures to the class of $\mc{L}^*$-structures, then we can write down a sentence saying, of a particular $\mc{L}^*$-structure, whether or not it is in the image of this reduction.

\begin{lemma}\label{lem:image}
Suppose that the class of $\mc{L}$-structures is reducible via effective bi-int\-er\-pret\-a\-bil\-it\-y to the class of $\mc{L}^*$-structures. There is a $\Pi^0_2$ $\mc{L}^*$-sentence which defines the image of this reduction.
\end{lemma}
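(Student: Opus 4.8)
The plan is to exploit the compositional structure of effective bi-interpretations. Suppose $\mc{A} \mapsto \mc{A}^*$ is the reduction via effective bi-interpretability. Then for each $\mc{L}$-structure $\mc{A}$ we have an effective interpretation of $\mc{A}$ in $\mc{A}^*$ and of $\mc{A}^*$ in $\mc{A}$, together with the $\Delta^{\comp}_1$-definable isomorphisms witnessing that the two compositions are the identity; and crucially all the formulas involved are fixed, independent of $\mc{A}$. Call an $\mc{L}^*$-structure $\mc{M}$ \emph{good} if it is of the form $\mc{A}^*$ for some $\mc{L}$-structure $\mc{A}$. The idea is that $\mc{M}$ is good precisely when the ``round trip'' interpretation built into $\mc{M}$ (interpret an $\mc{L}$-structure inside $\mc{M}$, then interpret an $\mc{L}^*$-structure inside that) gives back $\mc{M}$ itself via the fixed definable map; this is an internal, first-order-checkable condition on $\mc{M}$.

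Concretely, I would proceed as follows. First, using the fixed interpretation formulas, write down inside any $\mc{L}^*$-structure $\mc{M}$ the interpreted $\mc{L}$-structure $\mc{N} = \mc{N}(\mc{M})$ on $\Dom^{\mc{M}}_{\mc{N}}/\!\sim$, and then, using the fixed formulas for the other direction, the interpreted $\mc{L}^*$-structure $\mc{N}^* $ on $\Dom^{\mc{N}}_{\mc{N}^*}/\!\sim$, whose underlying set is a quotient of tuples of ($\sim$-classes of tuples) from $\mc{M}$. Also write down, using the fixed $\Delta^{\comp}_1$ formula for the composition $f \circ \tilde f$, the candidate map $h \colon \Dom_{\mc{N}^*}^{(\Dom^{\mc{M}}_{\mc{N}})} \to \mc{M}$. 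The sentence $\psi$ then asserts: (i) $\Dom^{\mc{M}}_{\mc{N}}$ together with $\sim$ and the relations $R_i$ really define an $\mc{L}$-structure (i.e. $\sim$ is an equivalence relation, the $R_i$ are $\sim$-invariant, and the interpreted function graphs are total and single-valued) — and likewise for the second interpretation applied to $\mc{N}$; and (ii) $h$ is a well-defined surjection onto $\mc{M}$ which respects $\sim$ and is a homomorphism (equivalently isomorphism) of $\mc{L}^*$-structures from $\mc{N}^*$ onto $\mc{M}$. If $\mc{M}$ is good, $\mc{M} = \mc{A}^*$, and the bi-interpretation conditions guarantee all of (i) and (ii) hold. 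Conversely, if (i) and (ii) hold for $\mc{M}$, then $\mc{N}(\mc{M})$ is a genuine $\mc{L}$-structure $\mc{A}$ and $\mc{M} \cong \mc{N}(\mc{M})^* = \mc{A}^*$, so $\mc{M}$ is (isomorphic to something) in the image — and since the image of the reduction is closed under isomorphism, $\mc{M}$ is good.

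For the complexity bound: each of the interpretation formulas is computable $\Sigma^{\comp}_1$ (in fact $\Delta^{\comp}_1$), so relativizing quantifiers to the various domains and substituting definitions keeps everything low-level, and the only genuinely unbounded quantification comes from the universal "for all tuples" clauses checking that $\sim$ is an equivalence relation, that relations are $\sim$-closed, that functions are total and single-valued, and that $h$ is a well-defined surjective isomorphism. Totality of the interpreted functions and surjectivity of $h$ contribute a $\forall\exists$ pattern, and the remaining clauses are $\forall$ over $\Delta^{\comp}_1$ (or $\Pi^{\comp}_1$) matrices; intersecting/conjoining these yields a $\Pi^0_2$ sentence overall. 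So $\psi$ is $\Pi^0_2$ as claimed.

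\textbf{Main obstacle.} The technical heart is bookkeeping rather than a deep idea: one must carefully set up the iterated interpretation (tuples of $\sim$-classes of tuples of elements of $\mc{M}$) and verify that "being good" really is equivalent to the internal statement that the composition map $h$ is an isomorphism onto $\mc{M}$ — i.e. that nothing about $\mc{A}$ is needed beyond what $\mc{M} = \mc{A}^*$ already encodes. This is exactly what effective bi-interpretability buys us (the compositions are definable and equal the identity, uniformly), so the equivalence goes through; and one must double-check that each clause, after relativization, stays within $\Pi^0_2$, with the $\forall\exists$ clauses (function totality, surjectivity of $h$) being the ones that pin the complexity at the $\Pi^0_2$ level rather than lower.
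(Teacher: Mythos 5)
Your proof takes the same basic approach as the paper: write down, from the fixed $\Delta^{\comp}_1$ interpretation formulas, a sentence in the language $\mc{L}^*$ that internally checks the interpretation data really yields a structure and that the fixed composition map is an isomorphism; the $\Pi^0_2$ bound comes from function totality and surjectivity being the only $\forall\exists$ clauses. The one substantive difference is what your sentence actually asserts. The paper checks five conditions — the $\mc{L}$-structure $\mc{B}$ interpreted in $\mc{M}$, the $\mc{L}^*$-structure $\mc{C}$ interpreted in $\mc{B}$, the $\mc{L}$-structure $\mc{D}$ interpreted in $\mc{C}$, and \emph{both} composition isomorphisms $\mc{M}\cong\mc{C}$ and $\mc{B}\cong\mc{D}$ — so that the sentence literally asserts that $\mc{M}$ and $\mc{N}(\mc{M})$ are effectively bi-interpretable via the given formulas. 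You stop at two levels and one composition, and this is still sufficient, but only because you lean on the fact that the reduction is total over \emph{all} $\mc{L}$-structures: once $\mc{N}(\mc{M})$ is a genuine $\mc{L}$-structure, the reduction assigns it some $\mc{L}^*$-structure $\mc{D}$, and $\mc{D}$ must be isomorphic to the structure obtained by applying the interpretation formulas to $\mc{N}(\mc{M})$ (namely $\mc{N}^*(\mc{M})$), which you have shown is isomorphic to $\mc{M}$. That link — identifying $\mc{N}^*(\mc{M})$ with the reduction's image of $\mc{N}(\mc{M})$ up to isomorphism — is correct but is glossed over in your writeup when you write ``$\mc{M}\cong\mc{N}(\mc{M})^*=\mc{A}^*$'' without distinguishing the two meanings of the star; it deserves a sentence of justification. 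With that sentence added, your more economical variant is fine and slightly shorter than the paper's argument.
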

\begin{proof}
Fix $\Delta^{\comp}_1$ formulas for the bi-interpretations. We want to write down a $\Pi^0_2$ $\mc{L}^*$-sentence which says, of a particular $\mc{L}^*$-structure $\mc{A}$, that these $\Delta^{\comp}_1$ formulas define:
\begin{enumerate}
	\item an $\mc{L}$-structure $\mc{B}$ interpreted in $\mc{A}$,
	\item an $\mc{L}^*$-structure $\mc{C}$ interpreted in $\mc{B}$,
	\item an $\mc{L}$-structure $\mc{D}$ interpreted in $\mc{C}$,
	\item an isomorphism between $\mc{A}$ and $\mc{C}$, and
	\item an isomorphism between $\mc{B}$ and $\mc{D}$.
\end{enumerate}
For (1), to say that these $\Delta^{\comp}_1$ formulas define an $\mc{L}$-structure $\mc{B}$ interpreted in $\mc{A}$, we just need to say that the formula defining an equivalence relation is in fact an equivalence relation, that the relation symbols and function symbols are well-defined on equivalence classes, that the function symbols are interpreted as functions, and that the domain is closed under the functions. This can all be expressed as a $\Pi^0_2$ $\mc{L}^*$-sentence.

For (2), to say that the formulas of the bi-interpretation define an $\mc{L}^*$-structure $\mc{C}$ interpreted in $\mc{B}$, we first write down a $\Pi^0_2$ $\mc{L}$-sentence $\chi$ such that $\mc{B} \models \chi$ if and only if the $\Delta^{\comp}_1$ formulas of the bi-interpretation define an $\mc{L}^*$ structure inside of $\mc{B}$. Then using Lemma \ref{lem:reduc} we get an $\mc{L}^*$ sentence $\chi^*$ such that $\mc{A} \models \chi^*$ if and only if $\mc{B} \models \chi$.

We can write down sentences expressing (3), (4), and (5) similarly. Let $\varphi$ be the resulting sentence. Then, for an $\mc{L}^*$-structure  $\mc{A}$, $\mc{A} \models \varphi$ if and only if there is an $\mc{L}$-structure $\mc{B}$ (obtained by (2)) such that $\mc{A}$ and $\mc{B}$ are bi-interpretable using the given $\Delta^{\comp}_1$ formulas.
\end{proof}

Next, we will show that given a finitely generated structure $\mc{A}$, $\mc{A}$ has a d-$\Sigma^0_2$ quasi Scott sentence if and only if $G(\mc{A})$ has a d-$\Sigma^0_2$ Scott sentence, where $G$ is the functor from Section \ref{sec:groups}. We will use more than just the fact that $\mc{A}$ and $G(\mc{A})$ are bi-interpretable. The issue is that the definition of quasi Scott sentences involve finite generation, and a structure which is bi-interpretable with a finitely generated structure is not necessarily finitely generated; all that we can conclude is that it is finitely generated by $\Delta^{\comp}_1$-definable functions, but these functions may not be in the language. However, for the particular functor $G$, we proved in Theorem \ref{thm:main-extra} that $\mc{A}$ is finitely generated if and only if $G(\mc{A})$ is finitely generated.

\begin{proposition}\label{prop:transfer-quasi}
Suppose that $\mc{A}$ is finitely generated. Then $\mc{A}$ has a d-$\Sigma^0_2$ quasi Scott sentence if and only if $G(\mc{A})$ has a d-$\Sigma^0_2$ quasi Scott sentence.
\end{proposition}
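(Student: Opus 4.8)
The plan is to prove both directions by combining the bi-interpretability of $\mc{A}$ and $G(\mc{A})$ with the special property, established in Theorem~\ref{thm:main-extra}, that $\mc{A}$ is finitely generated if and only if $G(\mc{A})$ is, and furthermore that the orbit of the named constants $(b,c,d,f_1,f_2)$ is definable by a finitary quantifier-free formula. The key conceptual point is that a quasi Scott sentence is a sentence picking out a structure \emph{among finitely generated structures}, so we need to be able to talk about ``being finitely generated and isomorphic to $G(\mc{A})$'' using low-complexity sentences, and to transfer such sentences back and forth along the interpretation using Lemma~\ref{lem:reduc}.

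For the forward direction, suppose $\mc{A}$ has a d-$\Sigma^0_2$ quasi Scott sentence $\varphi$. First I would use Lemma~\ref{lem:image} (applied to the reduction of $\mc{L}$-structures to groups-with-five-constants given by $\tilde{G}$, which is a reduction via effective bi-interpretability by Theorem~\ref{thm:main-extra}) to obtain a $\Pi^0_2$ sentence $\theta$ in the language of groups-with-constants that defines the image of the reduction. Then I would use Lemma~\ref{lem:image}'s interpretation, together with Lemma~\ref{lem:reduc} applied in the direction from $\tilde{G}(\mc{A})$ to its interpreted copy of $\mc{A}$: there is a d-$\Sigma^0_2$ sentence $\varphi^*$ in the language of groups-with-constants such that a group-with-constants $\mc{H}$ in the image of the reduction satisfies $\varphi^*$ iff its interpreted $\mc{L}$-structure satisfies $\varphi$. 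Now I need to pass from the language with five named constants to the pure group language: because the orbit of $(b,c,d,f_1,f_2)$ is definable by a finitary quantifier-free formula $\mu(x,y,z,t_1,t_2)$ (Lemma~\ref{lem:orbit}), I can replace the constants by existentially quantified variables constrained by $\mu$, at the cost of only a finite complexity jump; one should check this keeps things at d-$\Sigma^0_2$, using that $\mu$ is quantifier-free and that conjunction/a single block of existential quantifiers over a $\Sigma^0_2$/$\Pi^0_2$ matrix stays within d-$\Sigma^0_2$. The resulting group sentence, conjoined with $\theta$ and with a $\Pi^0_2$ sentence asserting finite generation is not literally needed (finite generation is handled by the ``quasi'' quantification), asserts: ``there exist $b,c,d,f_1,f_2$ in the orbit such that the interpreted structure satisfies $\varphi$, and I am in the image of the reduction.'' Among finitely generated groups, the models of this sentence are exactly those $G$ with $G \cong G(\mc{B})$ for some finitely generated $\mc{B}$ (using Theorem~\ref{thm:main-extra}(3) to see $\mc{B}$ is finitely generated, since $G$ is) with $\mc{B} \models \varphi$, hence $\mc{B} \cong \mc{A}$, hence $G \cong G(\mc{A})$. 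So this is a d-$\Sigma^0_2$ quasi Scott sentence for $G(\mc{A})$.

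For the reverse direction, suppose $G(\mc{A})$ has a d-$\Sigma^0_2$ quasi Scott sentence $\psi$. Working now with the interpretation of the group-with-constants inside $\mc{A}$ (from the bi-interpretation), I would first replace the named-constant version: since the constants $(b,c,d,f_1,f_2)$ lie in a quantifier-free-definable orbit, a quasi Scott sentence for the group $G(\mc{A})$ in the pure group language can be converted into one for $(G(\mc{A}),b,c,d,f_1,f_2)$ in the language with constants by conjoining the orbit condition $\mu(b,c,d,f_1,f_2)$ and existentially closing appropriately --- this is essentially Proposition~\ref{prop:add-const-ss} in the ``quasi'' setting and is cheap because $\mu$ is quantifier-free. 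Then, using Lemma~\ref{lem:reduc} for the interpretation of the group-with-constants in $\mc{A}$, I obtain a d-$\Sigma^0_2$ $\mc{L}$-sentence $\psi^*$ such that $\mc{B} \models \psi^*$ iff the group-with-constants interpreted in $\mc{B}$ satisfies the constant-version of $\psi$. Conjoin $\psi^*$ with the $\Pi^0_2$ image sentence from Lemma~\ref{lem:image} (this time for the reduction of $\mc{L}$-structures into groups, but read off inside $\mc{A}$, so that we know the interpreted group is genuinely a $G(-)$ and the bi-interpretation round-trips back to $\mc{B}$). Among finitely generated $\mc{L}$-structures $\mc{B}$, this sentence holds iff $G(\mc{B})$ (which is finitely generated by Theorem~\ref{thm:main-extra}(3), since $\mc{B}$ is) satisfies $\psi$, hence $G(\mc{B}) \cong G(\mc{A})$, hence (since $\mc{B}$ and $\mc{A}$ are recovered from $G(\mc{B})$ and $G(\mc{A})$ by the same interpretation) $\mc{B} \cong \mc{A}$. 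So we get a d-$\Sigma^0_2$ quasi Scott sentence for $\mc{A}$.

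The main obstacle I anticipate is the bookkeeping around finite generation and the named constants: a quasi Scott sentence only characterizes a structure among \emph{finitely generated} structures, and bi-interpretation does not preserve finite generation in general, so it is essential to invoke Theorem~\ref{thm:main-extra}(3) at exactly the right point in each direction --- namely, when we know one of $\mc{B}$, $G(\mc{B})$ is finitely generated (because it satisfies a quasi Scott sentence, or because it is being quantified over as a finitely generated structure) we must immediately transfer finite generation to the other via (3). A secondary technical point, which I expect to be routine but must be checked carefully, is verifying that replacing the five named constants by existential quantifiers over the quantifier-free orbit formula $\mu$, and conjoining with the $\Pi^0_2$ image sentence, does not push the complexity above d-$\Sigma^0_2$; this relies on $\mu$ being quantifier-free (so it can be absorbed into either the $\Sigma^0_2$ or $\Pi^0_2$ part) and on standard closure properties of the d-$\Sigma^0_2$ class under conjunction with $\Pi^0_2$ sentences and under a single existential block.
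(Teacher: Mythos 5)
Your proof takes essentially the same route as the paper's: Lemma~\ref{lem:image} to capture the image of the reduction, Lemma~\ref{lem:reduc} to transfer the candidate Scott sentence in each direction, and Theorem~\ref{thm:main-extra}(3) at exactly the right moments to carry finite generation across. You correctly identify that (3) is the load-bearing new ingredient here, since bi-interpretability alone would not preserve finite generation.

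The one genuine difference is how the two proofs handle the pure group $G(\mc{A})$ versus the group with named constants $\tilde{G}(\mc{A})$. The paper's proof really only produces a quasi Scott sentence in the language of groups with five constants (that is, for $\tilde{G}(\mc{A})$), and relegates the passage to and from the pure group to the separate Proposition~\ref{prop:quasi-const}; this is how the proposition is actually applied in the corollary after Theorem~\ref{thm:first-ex}. You instead fold the constant-passage into the proof itself, using the quantifier-free orbit formula from Lemma~\ref{lem:orbit} and the split of a d-$\Sigma^0_2$ sentence into its $\Sigma^0_2$ and $\Pi^0_2$ halves so that the existential block over the orbit can be absorbed without raising complexity. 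Both decompositions are correct; yours is slightly more self-contained in matching the literal statement, while the paper's is modular because it reuses Proposition~\ref{prop:quasi-const}. Be careful, though, that your summary phrase ``replace the constants by existentially quantified variables constrained by $\mu$'' must not be read as a single $\exists\bar{x}$ over the whole d-$\Sigma^0_2$ matrix, which would be $\Sigma^0_3$; you need, as you acknowledge in your closing paragraph and as the paper does in Proposition~\ref{prop:quasi-const}, the form $(\exists\bar{x})[\mu\wedge\sigma]\wedge(\forall\bar{x})[\mu\rightarrow\pi]$.

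One small piece of your argument is superfluous. In the reverse direction you conjoin with the $\Pi^0_2$ image sentence from Lemma~\ref{lem:image} ``so that we know the interpreted group is genuinely a $G(-)$.'' But the functor $\tilde{G}$ is defined on every $\mc{L}$-structure, so the group-with-constants interpreted inside any $\mc{B}$ is $\tilde{G}(\mc{B})$ by construction; the image condition is automatic there and adds nothing. The image sentence is needed only in the forward direction, where one starts from an arbitrary group and must first certify that it lies in the image of $\tilde{G}$ before the recovered $\mc{L}$-structure means anything. The paper's reverse direction accordingly uses only Lemma~\ref{lem:reduc}.
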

\begin{proof}
Suppose that $\mc{A}$ has a d-$\Sigma^0_2$ quasi Scott sentence $\varphi$. By Lemma \ref{lem:image} we can write down a $\Pi^0_2$ sentence $\chi$ which defines the groups $G$ which are isomorphic to $G(\mc{B})$ for some $\mc{L}$-structure $\mc{B}$. By Lemma \ref{lem:reduc} we can then write down a d-$\Sigma^0_2$ sentence $\psi$ in the language of groups such that if $G$ is a group with $G \models \chi \wedge \psi$, then $G = G(\mc{B})$ for some $\mc{L}$-structure $\mc{B} \models \varphi$. We claim that $\chi \wedge \psi$ is a d-$\Sigma^0_2$ quasi Scott sentence for $G(\mc{A})$. Suppose that $G$ is finitely generated and $G \models \chi \wedge \psi$. Then $G = G(\mc{B})$ for some $\mc{L}$-structure $\mc{B} \models \varphi$. By Theorem \ref{thm:main-extra} (3), $\mc{B}$ is finitely generated, and so $\mc{B}$ is isomorphic to $\mc{A}$. Thus $G = G(\mc{B})$ is isomorphic to $G(\mc{A})$.

Suppose that $G = G(\mc{A})$ has a d-$\Sigma^0_2$ quasi Scott sentence $\varphi$. By Lemma \ref{lem:reduc} we can write down a d-$\Sigma^0_2$ sentence $\psi$ which holds of those $\mc{L}$-structures $\mc{B}$ with $G(\mc{B}) \models \varphi$. We claim that $\psi$ is a quasi Scott sentence for $\mc{A}$. Suppose that $\mc{B} \models \psi$ is a finitely generated $\mc{L}$-structure. Then by Theorem \ref{thm:main-extra} (3), $G(\mc{B})$ is a finitely generated model of $\varphi$, and hence isomorphic to $G(\mc{A})$. Since $G(\mc{A})$ and $G(\mc{B})$ are isomorphic, $\mc{A}$ is isomorphic to $\mc{B}$.
\end{proof}

As with Scott sentences in Section \ref{sec:fields}, when we showed that finitely generated fields are not universal, we need to prove that we can add or remove constants from the signature of a structure without changing the complexity of its quasi Scott sentence. Note that Theorem \ref{thm:main-extra} (2) says that the constants in the signature of $G(\mc{A})$ satisfy the hypotheses of this proposition.

\begin{proposition}\label{prop:quasi-const}
Let $\mc{A}$ be a countable structure and $\bar{c} \in \mc{A}$. Suppose that the orbit of $\bar{c}$ is defined by a $\Sigma^0_1$ formula $\psi(\bar{x})$. Then $\mc{A}$ has a $\Sigma^0_2$ (respectively $\Pi^0_2$, d-$\Sigma^0_2$) quasi Scott sentence if and only if $(\mc{A},\bar{c})$ does.
\end{proposition}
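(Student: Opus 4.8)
The plan is to follow the template of Proposition \ref{prop:add-const-ss} and its converse, adjusting the bookkeeping to account for finite generation. The first thing I would record is the elementary fact that $\mc{A}$ is finitely generated if and only if $(\mc{A},\bar c)$ is: adding finitely many constants to a finitely generated structure keeps it finitely generated, and conversely, if $(\mc{A},\bar c)$ is generated as an $\mc{L}\cup\{\bar c\}$-structure by a tuple $\bar e$, then $\mc{A}$ is generated as an $\mc{L}$-structure by $\bar e\bar c$. Hence the hypothesis of the proposition (that orbits make sense, that quasi Scott sentences make sense) is non-vacuous for $\mc{A}$ exactly when it is for $(\mc{A},\bar c)$, and in either direction of the biconditional we are free to transfer finite generation between the two languages.

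For the direction from $\mc{A}$ to $(\mc{A},\bar c)$, I would show that if $\varphi$ is a quasi Scott sentence for $\mc{A}$ of the relevant complexity, then $\varphi \wedge \psi(\bar c)$ is a quasi Scott sentence for $(\mc{A},\bar c)$ of the same complexity. The complexity claim uses that $\psi$ is $\Sigma^0_1$ and therefore lies in $\Sigma^0_2$ and in $\Pi^0_2$ (a $\Sigma^0_1$ formula is $\Pi^0_2$ via the convention of vacuous universal quantification), so conjoining $\psi(\bar c)$ never raises the complexity of $\varphi$; in the d-$\Sigma^0_2$ case, $\psi(\bar c)$ is absorbed into the $\Sigma^0_2$ conjunct. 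Clearly $(\mc{A},\bar c)\models\varphi\wedge\psi(\bar c)$. If $(\mc{B},\bar d)$ is a finitely generated model of $\varphi\wedge\psi(\bar c)$, then $\mc{B}$ is a finitely generated $\mc{L}$-structure satisfying $\varphi$, so there is an isomorphism $h\colon\mc{A}\to\mc{B}$; since $\mc{B}\models\psi(\bar d)$, the tuple $h^{-1}(\bar d)$ lies in the orbit of $\bar c$ in $\mc{A}$, and precomposing $h$ with an automorphism of $\mc{A}$ carrying $\bar c$ to $h^{-1}(\bar d)$ gives an isomorphism $(\mc{A},\bar c)\cong(\mc{B},\bar d)$.

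For the direction from $(\mc{A},\bar c)$ to $\mc{A}$, write the quasi Scott sentence for $(\mc{A},\bar c)$ as $\varphi(\bar c)$, where $\varphi(\bar x)$ is an $\mc{L}$-formula with free variables $\bar x$. In the $\Sigma^0_2$ case take $(\exists\bar x)\varphi(\bar x)$; in the $\Pi^0_2$ case take $(\exists\bar x)\psi(\bar x)\wedge(\forall\bar x)(\psi(\bar x)\to\varphi(\bar x))$; in the d-$\Sigma^0_2$ case, writing $\varphi(\bar x)=\alpha(\bar x)\wedge\beta(\bar x)$ with $\alpha$ being $\Sigma^0_2$ and $\beta$ being $\Pi^0_2$, take $(\exists\bar x)[\psi(\bar x)\wedge\alpha(\bar x)]\wedge(\forall\bar x)(\psi(\bar x)\to\beta(\bar x))$. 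Each is an $\mc{L}$-sentence of the stated complexity, using closure of the classes under finite conjunction, finite disjunction, and quantification, together with the fact that $\psi$ is $\Sigma^0_1$ and $\neg\psi$ is $\Pi^0_1$. That $\mc{A}$ satisfies the chosen sentence holds because $\bar c$ witnesses the existential and every realization of $\psi$ in $\mc{A}$ is related to $\bar c$ by an automorphism, hence also satisfies $\varphi$ (respectively $\alpha$ and $\beta$). For uniqueness, a finitely generated $\mc{L}$-model $\mc{B}$ of the chosen sentence contains a tuple $\bar d$ with $\mc{B}\models\psi(\bar d)\wedge\varphi(\bar d)$ (in the $\Sigma^0_2$ case just $\mc{B}\models\varphi(\bar d)$); then $(\mc{B},\bar d)$ is a finitely generated model of $\varphi(\bar c)$, so $(\mc{B},\bar d)\cong(\mc{A},\bar c)$ and in particular $\mc{B}\cong\mc{A}$.

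I expect the only real care to be needed in the interplay between finite generation and the change of language between $\mc{L}$ and $\mc{L}\cup\{\bar c\}$ — that is, checking that a finitely generated structure in one language yields a finitely generated structure in the other, which is exactly the first observation, and that the candidate sentences are arranged so that a finitely generated witness in one language produces a finitely generated structure in the other. The complexity computations themselves are routine once one notes that the orbit of $\bar c$ being $\Sigma^0_1$-definable keeps everything inside $\Sigma^0_2$, $\Pi^0_2$, and d-$\Sigma^0_2$; this is the precise role of the hypothesis $\psi\in\Sigma^0_1$.
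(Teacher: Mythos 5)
Your proof is correct and follows the same approach as the paper: conjoining $\psi(\bar c)$ to pass from $\mc{A}$ to $(\mc{A},\bar c)$, and using $(\exists\bar x)\varphi(\bar x)$, respectively the $\psi$-relativized universal/existential combination, to pass back. You supply the verification details the paper leaves implicit (including the preliminary observation that finite generation transfers between $\mc{L}$ and $\mc{L}\cup\{\bar c\}$, and a clean decomposition of the d-$\Sigma^0_2$ sentence into its $\Sigma^0_2$ and $\Pi^0_2$ conjuncts), but the underlying argument is identical.
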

\begin{proof}
If $\mc{A}$ has a $\Sigma^0_2$ (respectively $\Pi^0_2$, d-$\Sigma^0_2$) quasi Scott sentence $\varphi$, then $\varphi \wedge \psi(\bar{c})$ is a $\Sigma^0_2$ (respectively $\Pi^0_2$, d-$\Sigma^0_2$) Scott sentence for $(\mc{A},\bar{c})$.

If $(\mc{A},\bar{c})$ has a $\Sigma^0_2$ quasi Scott sentence $\varphi(\bar{c})$ then $(\exists \bar{x}) \varphi(\bar{x})$ is a quasi Scott sentence for $\mc{A}$.

If $(\mc{A},\bar{c})$ has a $\Pi^0_2$ quasi Scott sentence $\varphi(\bar{c})$,
\[ (\exists \bar{x}) \psi(\bar{x}) \wedge (\forall \bar{x}) (\psi(\bar{x}) \longrightarrow \varphi(\bar{x}))\] 
is a quasi Scott sentence for $\mc{A}$.

If $(\mc{A},\bar{c})$ has a d-$\Sigma^0_2$ quasi Scott sentence $\varphi(\bar{c})$. Then
\[ (\exists \bar{x})[\psi(\bar{x}) \wedge \varphi(\bar{x})] \wedge  (\forall \bar{x}) (\psi(\bar{x}) \longrightarrow \gamma(\bar{x})) \]
is a quasi Scott sentence for $\mc{A}$.
\end{proof}

We also have analogues of Proposition \ref{prop:transfer-quasi} and \ref{prop:quasi-const} for $\Th_2(\mc{A})$.

\begin{proposition}\label{prop:transfer-cat}
Suppose that $\mc{A}$ is finitely generated. Then $\Th_2(\mc{A})$ is quasi categorical if and only if $\Th(G(\mc{A}))$ is quasi categorical.
\end{proposition}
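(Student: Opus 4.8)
The plan is to mirror the proof of Proposition \ref{prop:transfer-quasi}, but with $\Sigma^0_2$-and-$\Pi^0_2$-theories in place of single d-$\Sigma^0_2$ sentences. The point of Proposition \ref{prop:transfer-quasi} was that being a d-$\Sigma^0_2$ quasi Scott sentence is a property we can transport along the bi-interpretation $G$ using Lemmas \ref{lem:reduc} and \ref{lem:image}, together with the crucial fact from Theorem \ref{thm:main-extra}(3) that $\mc{A}$ is finitely generated iff $G(\mc{A})$ is. Here the object we transport is not a sentence but the whole 2-theory, and the key observation is that a bi-interpretation matches up $\Sigma^0_2$ and $\Pi^0_2$ sentences with $\Sigma^0_2$ and $\Pi^0_2$ sentences (Lemma \ref{lem:reduc}), so $\mc{B} \equiv_2 \mc{B}'$ implies $G(\mc{B}) \equiv_2 G(\mc{B}')$, and conversely. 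So I would first record this: if $G(\mc{B}) \equiv_2 G(\mc{B}')$ then, translating each $\Sigma^0_2$ or $\Pi^0_2$ $\mc{L}$-sentence $\theta$ true of $\mc{B}$ into the $\mc{L}^*$-sentence $\theta^*$ of the same complexity with $\mc{B} \models \theta \iff G(\mc{B}) \models \theta^*$ (via Lemma \ref{lem:reduc}), we get $\mc{B} \equiv_2 \mc{B}'$; the converse is the same with the interpretations run in the other direction. (One must be slightly careful that the translation in one direction and then back lands us in the same 2-elementary class; this follows because the bi-interpretation composes to something $\Delta^{\comp}_1$-definable and hence the back-and-forth translations of a $\Sigma^0_2$ sentence are 2-equivalent over the relevant structure, exactly as in Lemma \ref{lem:image} parts (4),(5).)

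Next I would handle the easy direction. Suppose $\Th_2(\mc{A})$ is quasi-categorical and let $H$ be a finitely generated group with $H \equiv_2 G(\mc{A})$. Using Lemma \ref{lem:image}, let $\chi$ be the $\Pi^0_2$ $\mc{L}^*$-sentence defining the image of $G$; since $G(\mc{A}) \models \chi$ and $\chi$ is $\Pi^0_2$ (hence in $\Th_2(G(\mc{A}))$), we get $H \models \chi$, so $H \cong G(\mc{B})$ for some $\mc{L}$-structure $\mc{B}$. By Theorem \ref{thm:main-extra}(3), $\mc{B}$ is finitely generated. By the 2-equivalence transfer above, $\mc{B} \equiv_2 \mc{A}$, so by quasi-categoricity $\mc{B} \cong \mc{A}$, hence $H \cong G(\mc{B}) \cong G(\mc{A})$, as required.

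For the other direction, suppose $\Th_2(G(\mc{A}))$ is quasi-categorical and let $\mc{B}$ be a finitely generated $\mc{L}$-structure with $\mc{B} \equiv_2 \mc{A}$. By Theorem \ref{thm:main-extra}(3), $G(\mc{B})$ is finitely generated, and by the transfer lemma $G(\mc{B}) \equiv_2 G(\mc{A})$; quasi-categoricity of $\Th_2(G(\mc{A}))$ gives $G(\mc{B}) \cong G(\mc{A})$, and since $G$ (being one half of a bi-interpretation, or just by the explicit description of $F$ in Section \ref{sec:groups}) reflects isomorphism, $\mc{B} \cong \mc{A}$. Thus $\Th_2(\mc{A})$ is quasi-categorical. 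The main obstacle is making the ``2-equivalence is preserved by $G$ in both directions'' step fully rigorous: Lemma \ref{lem:reduc} as stated only gives one direction of translation per interpretation, so I need to combine it with the bi-interpretation compositions (as in Lemma \ref{lem:image}(4)--(5)) to see that translating a $\Sigma^0_2$ $\mc{L}$-sentence to $\mc{L}^*$ and back does not leave the 2-elementary class — this is exactly the kind of bookkeeping that makes the analogous step for single sentences in Proposition \ref{prop:transfer-quasi} work, and it should go through verbatim here since the only complexities involved are $\Sigma^0_2$ and $\Pi^0_2$. I should also note the typo that $\Th(G(\mc{A}))$ in the statement should read $\Th_2(G(\mc{A}))$.
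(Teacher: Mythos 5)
Your proposal is correct and follows essentially the same route as the paper: both directions transfer $\Sigma^0_2$ and $\Pi^0_2$ sentences across the bi-interpretation via Lemma \ref{lem:reduc}, use the $\Pi^0_2$ sentence $\chi$ from Lemma \ref{lem:image} to pin down the image of the functor, and invoke Theorem \ref{thm:main-extra}(3) to carry finite generation back and forth. You are a bit more explicit than the paper about the finite-generation step and the ``round-trip'' of the translation (and you correctly flag the typo $\Th(G(\mc{A}))$ for $\Th_2(G(\mc{A}))$), but the argument is the same.
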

\begin{proof}
Suppose that $\Th_2(\mc{A})$ is quasi categorical. By Lemma \ref{lem:image} we can write down a $\Pi^0_2$ sentence $\chi$ which defines the groups $G$ which are isomorphic to $G(\mc{B})$ for some $\mc{L}$-structure $\mc{B}$. Then $\chi \in \Th_2(G(\mc{A}))$. Moreover, for each $\Sigma^0_2$ or $\Pi^0_2$ sentence $\varphi$, by Lemma \ref{lem:reduc} there is a $\Sigma^0_2$ or $\Pi^0_2$ sentence $\varphi^*$ in the language of groups such that if $G$ is a group $G = G(\mc{B})$, then $G \models \varphi^*$ if and only if $\mc{B} \models \varphi$. Thus $\Th_2(G(\mc{A}))$ is quasi categorical.

Suppose that $G = G(\mc{A})$ and $\Th_2(G)$ is quasi categorical. By Lemma \ref{lem:reduc}, for each $\Sigma^0_2$ or $\Pi^0_2$ sentence $\varphi$ in the language of groups, there is a $\Sigma^0_2$ or $\Pi^0_2$ sentence $\varphi^*$ which holds exactly of those $\mc{L}$-structures $\mc{B}$ with $G(\mc{B}) \models \varphi$. Then $\Th_2(\mc{A})$ contains the sentences $\varphi^*$ for each $\varphi \in \Th_2(G)$, and so $\Th_2(\mc{A})$ is quasi categorical.
\end{proof}

\begin{proposition}\label{prop:cat-const}
Let $\mc{A}$ be a finitely generated countable structure and $\bar{c} \in \mc{A}$. Suppose that the orbit of $\bar{c}$ is defined by a $\Sigma^0_1$ formula $\psi(\bar{x})$. Then $\Th_2(\mc{A})$ is quasi categorical if and only if $\Th_2(\mc{A},\bar{c})$ is.
\end{proposition}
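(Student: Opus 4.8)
The plan is to mirror the structure of Proposition~\ref{prop:quasi-const}, replacing ``quasi Scott sentence'' by ``$\Th_2$ is quasi categorical'' and taking advantage of the fact that the orbit of $\bar{c}$ is defined by a $\Sigma^0_1$ formula $\psi(\bar{x})$. Both directions will rest on the simple observation that a $\Sigma^0_2$ or $\Pi^0_2$ sentence in the language of $(\mc{A},\bar{c})$ can be translated into a $\Sigma^0_2$ or $\Pi^0_2$ sentence in the language of $\mc{A}$ and vice versa, using $\psi$ to handle the constants, and that these translations preserve truth in the relevant structures and preserve finite generation.

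First I would prove the forward direction. Assume $\Th_2(\mc{A})$ is quasi categorical. Let $(\mc{B},\bar{d})$ be a finitely generated model of $\Th_2(\mc{A},\bar{c})$; I must show $(\mc{B},\bar{d})\cong(\mc{A},\bar{c})$. Given any $\Sigma^0_2$ or $\Pi^0_2$ sentence $\varphi$ in the language of $\mc{A}$ with $\mc{A}\models\varphi$, the sentence $\varphi$ (viewed in the expanded language, where it does not mention $\bar{c}$) is still $\Sigma^0_2$ or $\Pi^0_2$ and true of $(\mc{A},\bar{c})$, hence true of $(\mc{B},\bar{d})$; so $\mc{B}\models\Th_2(\mc{A})$, and since $\mc{B}$ is finitely generated, quasi categoricity of $\Th_2(\mc{A})$ gives $\mc{B}\cong\mc{A}$. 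To pin down $\bar{d}$: note that $(\exists\bar{x})\,\psi(\bar{x})$ and, for each atomic or negated-atomic formula $\theta(\bar{x})$ true of $\bar{c}$ in $\mc{A}$, the sentence $(\forall\bar{x})(\psi(\bar{x})\rightarrow\theta(\bar{x}))$, are all $\Pi^0_2$ (the latter because $\psi$ is $\Sigma^0_1$, so $\neg\psi$ is $\Pi^0_1$ and $\psi\rightarrow\theta$ is $\Sigma^0_1$, hence the universal closure is $\Pi^0_2$) or $\Sigma^0_2$ sentences of the language of $\mc{A}$; they belong to $\Th_2(\mc{A})$. Hence they hold in $\mc{B}$, which forces $\bar{d}$ to realize the orbit defined by $\psi$, and since $\psi$ defines the orbit of $\bar{c}$ in $\mc{A}\cong\mc{B}$, after composing with the isomorphism we may assume $\bar{d}$ is sent to $\bar{c}$; thus $(\mc{B},\bar{d})\cong(\mc{A},\bar{c})$.

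For the converse, assume $\Th_2(\mc{A},\bar{c})$ is quasi categorical and let $\mc{B}$ be a finitely generated model of $\Th_2(\mc{A})$. Since $\mc{A}\models(\exists\bar{x})\psi(\bar{x})$ and this is a $\Sigma^0_2$ sentence of $\Th_2(\mc{A})$ (indeed $\Sigma^0_1$), $\mc{B}$ has a tuple $\bar{d}$ with $\mc{B}\models\psi(\bar{d})$; expand $\mc{B}$ to $(\mc{B},\bar{d})$. I claim $(\mc{B},\bar{d})\models\Th_2(\mc{A},\bar{c})$. Any $\Sigma^0_2$ or $\Pi^0_2$ sentence $\varphi(\bar{c})$ of the expanded language, true of $(\mc{A},\bar{c})$, can be rewritten using $\psi$: the sentence $(\exists\bar{x})(\psi(\bar{x})\wedge\varphi(\bar{x}))$ is $\Sigma^0_2$ and the sentence $(\forall\bar{x})(\psi(\bar{x})\rightarrow\varphi(\bar{x}))$ is $\Pi^0_2$ (again using that $\psi$ is $\Sigma^0_1$), and both are sentences of the language of $\mc{A}$ that are true of $\mc{A}$ (because $\psi$ defines the orbit of $\bar{c}$, so all realizations of $\psi$ are automorphic images of $\bar{c}$), hence they lie in $\Th_2(\mc{A})$ and so hold in $\mc{B}$; since $\mc{B}\models\psi(\bar{d})$, we extract $(\mc{B},\bar{d})\models\varphi(\bar{d})$. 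Therefore $(\mc{B},\bar{d})$ is a finitely generated model of $\Th_2(\mc{A},\bar{c})$, so $(\mc{B},\bar{d})\cong(\mc{A},\bar{c})$ and in particular $\mc{B}\cong\mc{A}$; thus $\Th_2(\mc{A})$ is quasi categorical.

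I expect the main subtlety — not really an obstacle — to be the bookkeeping that ensures each translated sentence lands in exactly $\Sigma^0_2$ or $\Pi^0_2$: this is precisely where the hypothesis that the orbit of $\bar{c}$ is $\Sigma^0_1$-definable (rather than merely $\Sigma^0_2$) is used, since it keeps $\psi\rightarrow\varphi$ from climbing the hierarchy when $\varphi$ is $\Pi^0_2$. One should also double-check that when $\varphi$ is merely $\Sigma^0_2$, the sentence $(\exists\bar x)(\psi(\bar x)\wedge\varphi(\bar x))$ is still $\Sigma^0_2$ (it is, since $\psi$ is $\Sigma^0_1\subseteq\Sigma^0_2$ and $\Sigma^0_2$ is closed under finite conjunction and existential quantification), and that passing to $\Pi^0_2$ sentences via $(\forall\bar x)(\psi(\bar x)\rightarrow\varphi(\bar x))$ works for $\Pi^0_2$ $\varphi$. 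Everything else is a direct adaptation of the proof of Proposition~\ref{prop:quasi-const}.
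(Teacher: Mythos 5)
The forward direction is essentially correct, if more roundabout than the paper's one-line observation that $\Th_2(\mc{A},\bar{c})\supseteq\Th_2(\mc{A})\cup\{\psi(\bar{c})\}$. (The digression about atomic and negated-atomic formulas is unnecessary; all you need is that $\psi(\bar{c})$ itself is a $\Sigma^0_1\subseteq\Sigma^0_2$ sentence in $\Th_2(\mc{A},\bar{c})$, so $(\mc{B},\bar{d})\models\psi(\bar{d})$, and then under the isomorphism $\mc{B}\cong\mc{A}$ the tuple $\bar{d}$ lands in the set defined by $\psi$, i.e.\ the orbit of $\bar{c}$.)

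The converse direction has a genuine gap. You first pick $\bar{d}$ to witness $\psi$ in $\mc{B}$, and then for each $\Sigma^0_2$ sentence $\varphi(\bar{c})\in\Th_2(\mc{A},\bar{c})$ you argue that $(\exists\bar{x})(\psi(\bar{x})\wedge\varphi(\bar{x}))\in\Th_2(\mc{A})$, hence holds in $\mc{B}$, and conclude $(\mc{B},\bar{d})\models\varphi(\bar{d})$. But the truth of $(\exists\bar{x})(\psi(\bar{x})\wedge\varphi(\bar{x}))$ in $\mc{B}$ only says \emph{some} tuple satisfying $\psi$ also satisfies $\varphi$ --- it need not be your already-chosen $\bar{d}$, and at this point in the argument you cannot assume all realizations of $\psi$ in $\mc{B}$ are automorphic, since $\mc{B}\cong\mc{A}$ is precisely what you are trying to prove. (The universal rewriting $(\forall\bar{x})(\psi(\bar{x})\to\varphi(\bar{x}))$ would pin down $\bar{d}$, but as you yourself note it is only $\Pi^0_2$ when $\varphi$ is $\Pi^0_2$; for $\Sigma^0_2$ $\varphi$ it is $\Pi^0_3$ and so not available in $\Th_2(\mc{A})$.) The paper closes this gap by invoking the remark, stated just before Theorem~\ref{thm:quasi-cat-eq}, that since $(\mc{A},\bar{c})$ is finitely generated there is a single $\Sigma^0_2$ sentence $\varphi_0(\bar{c})$ (``there is a $\Pi^0_1$-type realization of the generating tuple'') that entails every $\Sigma^0_2$ sentence of $\Th_2(\mc{A},\bar{c})$; one then chooses $\bar{d}$ to witness $(\exists\bar{x})(\psi(\bar{x})\wedge\varphi_0(\bar{x}))$, so that $(\mc{B},\bar{d})$ satisfies $\varphi_0(\bar{d})$ and hence all $\Sigma^0_2$ sentences at once, with the $\Pi^0_2$ sentences handled exactly as you do. Your argument as written does not use the finite generation of $\mc{A}$ in the converse direction at all, which is a sign the hypothesis is being dropped somewhere it is needed.
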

\begin{proof}
If $\Th_2(\mc{A})$ is quasi categorical, then $\Th_2(\mc{A},\bar{c}) \supseteq \Th_2(\mc{A}) \cup \{\psi(\bar{c})\}$ is as well.

Suppose that $\Th_2(\mc{A},\bar{c})$ is quasi categorical. As $\mc{A}$ is finitely generated, there is a single $\Sigma^0_2$ formula $\varphi(\bar{c})$ which entails each other $\Sigma^0_2$ formula in $\Th_2(\mc{A},\bar{c})$. Then $\Th_2(\mc{A})$ is quasi categorical as it contains the $\Sigma^0_2$ formula $(\exists \bar{x}) [\psi(\bar{x}) \wedge \varphi(\bar{x})]$ and also contains, for each $\Pi^0_2$ formula $\theta(\bar{c}) \in \Th_2(\mc{A},\bar{c})$, the formula $(\forall \bar{x})[\psi(\bar{x}) \longrightarrow \theta(\bar{x})]$.
\end{proof}

\subsection{Examples}

We know that every finitely generated structure has a $\Sigma^0_3$ Scott sentence, as well as a $\Sigma^0_3$ and a $\Pi^0_3$ quasi Scott sentence. The various possible combinations of Scott sentences and quasi Scott sentences for a structure $\mc{A}$ are as follows, in order from most complicated to describe to simplest to describe:
\begin{enumerate}
	\item $\Th_2(\mc{A})$ is not quasi categorical; then $\mc{A}$ does not have a d-$\Sigma^0_2$ Scott sentence or quasi Scott sentence.
	\item $\Th_2(\mc{A})$ is quasi categorical, $\mc{A}$ does not have a d-$\Sigma^0_2$ quasi Scott sentence (and hence no d-$\Sigma^0_2$ Scott sentence).
	\item $\mc{A}$ has a d-$\Sigma^0_2$ quasi Scott sentence, but no d-$\Sigma^0_2$ Scott sentence.
	\item A d-$\Sigma^0_2$ Scott sentence (which is also a d-$\Sigma^0_2$ Scott sentence).
\end{enumerate}
We know that there are examples of (4), say finitely generated fields, and we know from \cite{HTHo} that there is a finitely generated group which falls into one of (1), (2), or (3), but without proving which. In the remainder of this paper, we will show that there are finitely generated groups falling into (1) and (3). We leave the question of whether there are any structures falling into (2) as an open question.

The fact that there is a finitely generated group with no d-$\Sigma^0_2$ quasi Scott sentence is particularly interesting, as such a group has both a $\Sigma^0_3$ and $\Pi^0_3$ quasi Scott sentence. D. Miller \cite{Miller78} showed that a structure with both a $\Sigma^0_3$ and a $\Pi^0_3$ Scott sentence must have a d-$\Sigma^0_2$ Scott sentence, but our examples shows that the analogous result is not true for quasi Scott sentences.

The strategy for both will be to construct an example in some language, and then to use the universality of finitely generated groups, together with results from the previous section, to obtain a group.

\begin{theorem}\label{thm:first-ex}
There is a finitely generated structure which has no d-$\Sigma^0_2$ Scott sentence, but which does have a d-$\Sigma^0_2$ quasi Scott sentence.
\end{theorem}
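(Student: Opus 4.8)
The plan is to build a finitely generated structure $\mc{A}$ which contains a proper $\Sigma^0_1$-elementary copy of itself (so that, by Theorem \ref{thm:scott-eq}, it has no d-$\Sigma^0_2$ Scott sentence), but for which the chain of such self-embeddings is ``rigid enough'' that $\mc{A}$ is the only finitely generated structure containing a copy of itself as a $\Sigma^0_1$-elementary substructure — this last property yielding a d-$\Sigma^0_2$ quasi Scott sentence via the sentence $(\exists \bar{x}) p(\bar{x})$ together with a $\Pi^0_2$ clause forbidding the other possible overstructures, as in the sufficient condition proved just above (the theorem with $\mc{C}$ the countably many overstructures). The cleanest candidate is something like $(\mathbb{N}; S, 0)$ with a successor function and named constant, or a variant such as $(\mathbb{N}; <)$ or a unary function $f(n) = n+1$ together with finitely many further predicates; here the map $n \mapsto n+1$ embeds $\mc{A}$ as the substructure $\{1,2,3,\dots\}$, and this is a $\Sigma^0_1$-elementary (indeed elementary) embedding, so $\mc{A}$ has no d-$\Sigma^0_2$ Scott sentence.

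The key steps, in order, would be: (i) fix the language and the structure $\mc{A}$, and verify it is finitely generated (by $0$, or by $0$ together with whatever generates the extra structure); (ii) exhibit the proper self-embedding and check it is $\Sigma^0_1$-elementary, invoking Theorem \ref{thm:scott-eq} to conclude there is no d-$\Sigma^0_2$ Scott sentence; (iii) classify all finitely generated structures $\mc{B}$ which contain a $\Sigma^0_1$-elementary substructure isomorphic to $\mc{A}$ — one must show there are only countably many such $\mc{B}$ up to isomorphism, and that none of them other than $\mc{A}$ itself embeds $\Sigma^0_1$-elementarily back into $\mc{A}$; (iv) apply the sufficient-condition theorem from the previous subsection to produce the d-$\Sigma^0_2$ quasi Scott sentence. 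Concretely, for $\mc{A} = (\mathbb{N}; S)$ (successor, no constant) the finitely generated structures extending it as a $\Sigma^0_1$-substructure are classified by how many ``extra'' predecessor-chains hang below the copy of $\mathbb{N}$: either one gets $\mathbb{Z}$-chains, which are not finitely generated, or one gets a single structure of the shape $\mathbb{N} + (\text{finite tail attached below})$, and finite generation forces only finitely many possibilities; the point is to arrange the extra structure so that this classification genuinely yields a \emph{countable} (in fact finite or simply-parametrized) family, none of which re-embeds into $\mc{A}$.

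The main obstacle is step (iii): the whole content of the theorem is the tension between ``$\mc{A}$ embeds properly into itself'' (needed to kill the d-$\Sigma^0_2$ Scott sentence) and ``the class of finitely generated $\Sigma^0_1$-overstructures of $\mc{A}$ is small and does not fold back onto $\mc{A}$'' (needed for the quasi Scott sentence). Choosing $\mc{A}$ too simple (e.g.\ $(\mathbb{N};S,0)$ with the constant named) makes it \emph{rigid} with respect to $\Sigma^0_1$-embeddings, so there is no proper self-embedding and the example fails the first requirement; choosing it too rich risks an uncountable family of overstructures, or overstructures that embed back. So the delicate part is finding the Goldilocks structure — I would expect the construction to use a base copy of $\omega$ under successor (giving the self-embedding) decorated by predicates or a second sort that is ``homogeneous upward but constrained downward,'' so that any $\Sigma^0_1$-overstructure must look like $\mc{A}$ plus a bounded amount of extra finitely-generated data, enumerable in a computable list, while a $\Sigma^0_1$-embedding into $\mc{A}$ would be forced to be surjective. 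Once $\mc{A}$ is in hand, the transfer to a finitely generated group is immediate: apply Theorem \ref{thm:main-extra} to get $G(\mc{A})$, then Propositions \ref{prop:transfer-quasi} and \ref{prop:quasi-const} (the latter applicable since by Theorem \ref{thm:main-extra}(2) the named constants have $\Sigma^0_1$-definable orbit) to move both the presence of a d-$\Sigma^0_2$ quasi Scott sentence and the absence of a d-$\Sigma^0_2$ Scott sentence — the latter also via Montalb\'an's invariance of Scott-sentence complexity under bi-interpretation — across to the group.
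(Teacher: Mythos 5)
Your high-level plan (use Theorem \ref{thm:scott-eq} to kill the d-$\Sigma^0_2$ Scott sentence, produce a d-$\Sigma^0_2$ quasi Scott sentence, and transfer to groups via $G$, Proposition \ref{prop:transfer-quasi}, Proposition \ref{prop:quasi-const} and Theorem \ref{thm:main-extra}(2)) matches the paper's. But there is a genuine gap in the middle, and it is exactly the one you flag as ``the delicate part'': you never exhibit a structure that actually works, and the concrete candidates you name do not.

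Take $\mc{A} = (\mathbb{N}; S)$ and $\mc{B}$ the substructure $\{1,2,3,\dots\}$. Although $\mc{B} \cong \mc{A}$, the embedding is \emph{not} $\Sigma^0_1$-elementary: the $\Sigma^0_1$ formula $(\exists y)\,[S(y)=x]$ holds of the element $1$ in $\mc{A}$ but fails in $\mc{B}$. So by Theorem \ref{thm:scott-eq}, $(\mathbb{N};S)$ in fact \emph{has} a d-$\Sigma^0_2$ Scott sentence and cannot serve here; the same failure of $\Sigma^0_1$-elementarity afflicts $(\mathbb{N};<)$ and $(\mathbb{N};S,0)$ for the same reason. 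The difficulty is that any ``bottom'' of a successor-like structure is $\Sigma^0_1$-detectable. The paper resolves this by using an infinite language: the domain is $\{(n,\tau) : n\in\omega, \tau\in\mathbb{Z}^{<\omega}\}$ with a parent operator $p$ and infinitely many child operators $(c_i)_{i\in\mathbb{Z}}$. The substructure generated by $(1,\la\ra)$ is then a proper $\Sigma^0_1$-elementary copy, precisely because any particular quantifier-free formula mentions only finitely many of the $c_i$, so one can translate a witness in $\mc{A}$ to a witness in $\mc{B}$ by shifting into an unused child $c_k$ (replacing $(0,\tau)$ by $(1,k\concat\tau)$ for $k$ large). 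Your proposal does not contain this idea, and without something like it step (ii) fails.

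Your step (iii)/(iv) is also not how the paper proceeds. Rather than classifying all finitely generated $\Sigma^0_1$-overstructures of $\mc{A}$ and invoking the sufficient-condition theorem (which would require verifying that there are only countably many such overstructures and that none folds back, neither of which is verified for the actual $\mc{A}$), the paper writes down the d-$\Sigma^0_2$ quasi Scott sentence directly: ``there is a $\Sigma^0_1$-elementary substructure isomorphic to $\mc{A}$'' ($\Sigma^0_2$) together with the $\Pi^0_2$ conjuncts ``any two elements are comparable under generation'' and ``every element either generates a copy of $\mc{A}$ or is in the image of some $c_i$.'' Verifying this is a short term-rewriting argument using the identity $p(c_i(x))=x$ to push a generator down to one that is not in the range of any $c_i$. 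This direct route avoids the overstructure classification entirely and is, I think, the more robust approach; your route would also require checking the countability hypothesis, which you have not done and which is not obviously true for the structures in play.

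So: the skeleton and the group-transfer step are right, the concrete construction of $\mc{A}$ is missing and the candidates proposed are incorrect, and the derivation of the quasi Scott sentence takes a detour through a sufficient condition whose hypotheses would need their own nontrivial verification.
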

\begin{proof}
The language of $\mc{A}$ will consist of unary operators $p$ and $(c_i)_{i \in \mathbb{Z}}$. $\mc{A}$ will be an unrooted tree with $p$ as the parent operator. (It will be unrooted because there will be an infinite sequence of parents.) The domain $A$ of $\mc{A}$ consist of elements $\{ (n,\tau) \mid n \in \omega\text{ and }\tau \in \mathbb{Z}^{<\omega}\}$. Given $\tau \in \mathbb{Z}^{<\omega}$, $\tau \neq \langle \rangle$, define $\tau^-$ to be $\tau$ with the last entry removed. The parent $p(n,\tau)$ of $(n,\tau)$ is $(n,\tau^-)$ if $\tau \neq \la \ra$, and $(n+1,\la \ra)$ otherwise. See Figure \ref{treefigure}. For each $(n,\tau)$, we have the $i$th child operator $c_i(n,\tau) = (n,\tau \concat i)$. Note that $\mc{A}$ is generated by $(0,\la \ra)$. Indeed, $(n,\la i_1,\ldots,i_\ell \ra) = c_{i_\ell} \circ \cdots \circ c_{i_1} \circ p^n (0,\la \ra)$.

\begin{sidewaysfigure}[pt]
\[ \xymatrix @C=0pc @R=1pc{
&&&&&\iddots \ar@{-}[ddl]\\\\
&&&&(2,\la \ra)\ar@{-}[ddddl]\ar@{-}[dddr]\ar@{-}[dddrr]\ar@{-}[dddrrr]\ar@{-}[dddrrrr]\ar@{-}@/^/[dddrrrrr]\\\\\\
&&&&&\cdots & (2,\la -1 \ra)\ar@{-}[d] & (2,\la 0 \ra)\ar@{-}[d] & (2,\la 1 \ra)\ar@{-}[d] & \cdots
\\
&&&(1,\la \ra)\ar@{-}[dddddl]\ar@{-}[dddr]\ar@{-}[dddrr]\ar@{-}[dddrrrrr]\ar@{-}@/^/[dddrrrrrrrr]\ar@{-}@/^1pc/[dddrrrrrrrrr]&&&\vdots&\vdots&\vdots
\\\\\\
&&&&\cdots & (1,\la -1 \ra)\ar@{-}[d] &&& (1,\la 0 \ra)\ar@{-}[d]\ar@{-}[dl]\ar@{-}@/_1pc/[dll]\ar@{-}[dr]\ar@{-}@/^1pc/[drr] &&& (1,\la 1 \ra)\ar@{-}[d] & \cdots
\\
&&&&&\vdots & \cdots & (1,\la 0,-1 \ra)\ar@{-}[d] & (1,\la 0,0 \ra)\ar@{-}[d] & (1,\la 0,1 \ra)\ar@{-}[d] & \cdots & \vdots
\\
&&(0,\la \ra)\ar@{-}[dddll]\ar@{-}[dddl]\ar@{-}[dddrrrr]\ar@{-}@/^1pc/[dddrrrrrrrrr]\ar@{-}@/^2pc/[dddrrrrrrrrrr]&& & & & \vdots & \vdots & \vdots &&&
 \\\\\\
\cdots & (0,\la -1 \ra)\ar@{-}[d]\ar@{-}[dr]\ar@{-}[dl] &&&&& (0,\la 0 \ra)\ar@{-}[d]\ar@{-}[dll]\ar@{-}@/_1pc/[dlll]\ar@{-}[drr]\ar@{-}@/^1pc/[drrr] &&&&& (0,\la 1 \ra)\ar@{-}[d]\ar@{-}[dr]\ar@{-}[dl] & \cdots
\\
\cdots & (0,\la -1,0 \ra)\ar@{-}[d] & \cdots & \cdots & (0,\la 0,-1 \ra)\ar@{-}[d] && (0,\la 0,0 \ra)\ar@{-}[d]\ar@{-}[dr]\ar@{-}[dl] && (0,\la 0,1 \ra)\ar@{-}[d] & \cdots & \cdots & (0,\la 1,0 \ra)\ar@{-}[d] & \cdots
\\
& \vdots & & & \vdots & \cdots & (0,\la 0,0,0 \ra)\ar@{-}[d] & \cdots & \vdots &&& \vdots
\\
&&&&&& \vdots
} \]
\caption{The tree $(A,p)$.}\label{treefigure}
\end{sidewaysfigure}

To see that $\mc{A}$ has no d-$\Sigma^0_2$ Scott sentence, we will use Theorem \ref{thm:scott-eq}, showing that $\mc{A}$ contains a copy of itself as a proper $\Sigma_1$-elementary substructure. Let $\mc{B}$ be the substructure generated by $(1,\la \ra)$. It is easy to see that $(0,\la \ra) \notin \mc{B}$, so that $\mc{B}$ is a proper substructure of $\mc{A}$. To see that $\mc{B} \prec_{\Sigma_1} \mc{A}$, let $\bar{b}$ be a tuple of elements from $\mc{B}$ and let $\varphi(\bar{x},\bar{y})$ be a quantifier-free formula with $\mc{A} \models (\exists \bar{x}) \varphi(\bar{x},\bar{b})$. We must show that $\mc{B} \models (\exists \bar{x}) \varphi(\bar{x},\bar{b})$. Let $\bar{a} \in \mc{A}$ be such that $\mc{A} \models \varphi(\bar{a},\bar{b})$. Let $k$ be sufficiently large that $\varphi$ involves only the symbols $p$ and $(c_i)_{|i| < k}$, and each element of $\bar{a}$ and $\bar{b}$ is of the form $(n,\tau)$ with each entry of $\tau$ smaller than $k$. Let $\bar{a}' \in \mc{B}$ be obtained from $\bar{a}$ by replacing each element of $\bar{a}'$ of the form $(0,\tau)$ by $(1,k \concat \tau)$. Then the same relations from $p$ and $(c_i)_{|i| < k}$ hold between $\bar{a},\bar{b}$ and $\bar{a}',\bar{b}$. So $\mc{B} \models \varphi(\bar{a}',\bar{b})$ as desired.

Let $\varphi$ be the d-$\Sigma^0_2$ sentence which is the conjunct of the sentences which say:
\begin{itemize}
	\item there is a $\Sigma^0_1$-elementary substructure isomorphic to $\mc{A}$,
	\item for every two elements, one generates the other,
	\item $(\forall x)[ \la x \ra \cong \mc{A} \vee \bigdoublevee_{i \in \mathbb{Z}} (\exists y) [c_i(y) = x] ]$,
\end{itemize}
These are $\Sigma^0_2$, $\Pi^0_2$, and $\Pi^0_2$ respectively. The third sentence says that each element which is not the image of an element under some $c_i$ generates a copy of $\mc{A}$. It is not hard to see that $\mc{A} \models \varphi$.

Let $\mc{B}$ be a finitely generated model of this sentence $\varphi$; using the second conjunct, $\mc{B}$ is in fact generated by a single element $b$. We claim that we may assume that the generator $b$ is not the image of any element under $c_i$; then by the third conjunct, $b$ generates a copy of $\mc{A}$ and we are done. If $b = c_i(x)$, then replace $b$ by $x = p(b)$, which is still a generator of $\mc{B}$. We claim that this process will end at some point. Using the first conjunct, we may assume that $\mc{A} \preceq_{\Sigma_1} \mc{B}$. Let $a = (0,\la \ra) \in \mc{A}$. Then $a$ is generated by $b$, say by a term $t$. In $\mc{A}$, $a \neq c_i(x)$ for any $x$; thus the same is true in $\mc{B}$. So $t = p \circ \cdots$. Moreover, for any element $x \in \mc{A}$, $p(c_i(x)) = x$, and so the same is true in $\mc{B}$. Thus $t$ is equivalent to $p^n$ for some $n$, and $a = p^n(b)$. So the process described above can go at most $n$ steps.
\end{proof}

\begin{corollary}
There is a finitely generated group which has no d-$\Sigma^0_2$ Scott sentence, but which does have a d-$\Sigma^0_2$ quasi Scott sentence.
\end{corollary}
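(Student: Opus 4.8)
The plan is to feed the structure constructed in Theorem \ref{thm:first-ex} into the universality machinery of Section \ref{sec:groups}. Let $\mc{A}$ be the finitely generated structure of Theorem \ref{thm:first-ex}, so that $\mc{A}$ has no d-$\Sigma^0_2$ Scott sentence but does have a d-$\Sigma^0_2$ quasi Scott sentence. Apply Theorem \ref{thm:main-extra} to obtain the group $G(\mc{A})$ together with its distinguished elements $b,c,d,f_1,f_2$. By Theorem \ref{thm:main-extra}(3), $G(\mc{A})$ is finitely generated because $\mc{A}$ is. The claim is that $G(\mc{A})$ witnesses the corollary.

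First, for the quasi Scott sentence: since $\mc{A}$ is finitely generated and has a d-$\Sigma^0_2$ quasi Scott sentence, Proposition \ref{prop:transfer-quasi} applies directly and gives that $G(\mc{A})$ has a d-$\Sigma^0_2$ quasi Scott sentence. Next, for the absence of a d-$\Sigma^0_2$ Scott sentence: by Theorem \ref{thm:main-extra}, $\mc{A}$ is effectively bi-interpretable with $\tilde{G}(\mc{A}) = (G(\mc{A}),b,c,d,f_1,f_2)$, and bi-interpretable structures have Scott sentences of the same complexity (\cite[Lemma 5.3]{MonICM}), so $\tilde{G}(\mc{A})$ has no d-$\Sigma^0_2$ Scott sentence. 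If $G(\mc{A})$ itself had a d-$\Sigma^0_2$ Scott sentence, then naming the finitely many constants $b,c,d,f_1,f_2$ would, by Proposition \ref{prop:add-const-ss}, yield a d-$\Sigma^0_2$ Scott sentence for $\tilde{G}(\mc{A})$, a contradiction. Hence $G(\mc{A})$ has no d-$\Sigma^0_2$ Scott sentence.

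There is no real obstacle here beyond assembling the earlier results in the right order; the one point that requires a little care is keeping track of the named constants when passing between $G(\mc{A})$ and $\tilde{G}(\mc{A})$, and this is absorbed cleanly by Proposition \ref{prop:add-const-ss} (for this corollary one does not even need the quantifier-free definability of the orbit of $(b,c,d,f_1,f_2)$ from Theorem \ref{thm:main-extra}(2), only the fact that finitely many constants are being named).
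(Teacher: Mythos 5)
Your overall strategy matches the paper's, and the Scott-sentence half of your argument is exactly what the paper does: bi-interpretability with $\tilde{G}(\mc{A})$ rules out a d-$\Sigma^0_2$ Scott sentence for $\tilde{G}(\mc{A})$, and Proposition \ref{prop:add-const-ss} (in contrapositive) rules it out for $G(\mc{A})$.

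There is, however, a gap in the quasi-Scott-sentence half, and your closing remark that ``one does not even need the quantifier-free definability of the orbit of $(b,c,d,f_1,f_2)$'' is precisely where it lies. Although Proposition \ref{prop:transfer-quasi} is worded as giving a d-$\Sigma^0_2$ quasi Scott sentence for $G(\mc{A})$, its proof goes through Lemmas \ref{lem:reduc} and \ref{lem:image}, which concern the reduction from $\mc{L}$-structures to the class of \emph{groups with five constants named}. The sentences $\chi$ and $\psi$ produced there therefore mention the constant symbols, and what the proposition actually delivers is a d-$\Sigma^0_2$ quasi Scott sentence for the expanded structure $\tilde{G}(\mc{A}) = (G(\mc{A}),b,c,d,f_1,f_2)$, not for the bare group. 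This is exactly how the paper uses it: the paper's corollary proof reads Proposition \ref{prop:transfer-quasi} as giving a quasi Scott sentence for $(G,\bar{c})$, then invokes Theorem \ref{thm:main-extra}(2) for the $\Sigma^0_1$-definability of the orbit of $\bar{c}$ and applies Proposition \ref{prop:quasi-const} to drop the constants. That last step cannot be skipped: for a d-$\Sigma^0_2$ quasi Scott sentence one must guard the universally quantified $\Pi^0_2$ part by the orbit formula, and that requires the orbit to be $\Sigma^0_1$-definable. So you do need Theorem \ref{thm:main-extra}(2) (or at least its consequence that the orbit is $\Sigma^0_1$), and your argument should route through Proposition \ref{prop:quasi-const} to finish.
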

\begin{proof}
Let $\mc{A}$ be the structure from the previous theorem. Let $G$ be the finitely generated group and $\bar{c} \in G$ the 5-tuple of elements with $(G,\bar{c}) = \tilde{G}(\mc{A})$. Since $\mc{A}$ and $(G,\bar{c})$ are effectively bi-interpretable, $(G,\bar{c})$ has no d-$\Sigma^0_2$ Scott sentence. By Propositions \ref{prop:add-const-ss}, $G$ has no d-$\Sigma^0_2$ Scott sentence.

By Proposition \ref{prop:transfer-quasi}, $(G,\bar{c})$ has a d-$\Sigma^0_2$ quasi Scott sentence. By Theorem \ref{thm:main-extra} (2), the orbit of $\bar{c}$ is $\Sigma^0_1$-definable. By Proposition \ref{prop:quasi-const}, $G$ has a d-$\Sigma^0_2$ quasi Scott sentence.
\end{proof}

\begin{theorem}
There is a finitely generated structure $\mc{A}$ such that $\Th_2(\mc{A})$ is not quasi categorical.
\end{theorem}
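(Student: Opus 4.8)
The plan is to build a finitely generated structure $\mc{A}$ together with a non-isomorphic finitely generated structure $\mc{B}$ so that, by Theorem \ref{thm:quasi-cat-eq}, condition (1) of that theorem holds: $\mc{A}$ has a $\Sigma^0_1$-elementary finitely generated substructure $\mc{B} \ncong \mc{A}$ which itself contains a $\Sigma^0_1$-elementary substructure isomorphic to $\mc{A}$. The most natural way to achieve this is to modify the tree construction from Theorem \ref{thm:first-ex}. In that example we had $\mc{A}$ containing a copy of itself as a proper $\Sigma^0_1$-elementary substructure (the subtree generated by $(1,\la\ra)$), but the key point now is that we want the intermediate structure $\mc{B}$ to be \emph{genuinely different} from $\mc{A}$, not another copy of $\mc{A}$. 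So I would take a tree $\mc{A}$ as before but arrange that moving ``up'' one level produces a tree $\mc{B}$ of a different isomorphism type (say, with one extra ``marked'' branch, or with the root having a different number of children), while moving up one level from $\mc{B}$ lands back in (a copy of) $\mc{A}$. Concretely: let the levels alternate between two shapes, so that the subtree hanging below level $0$ is of type $\mc{A}$, below level $1$ is of type $\mc{B}$, below level $2$ is of type $\mc{A}$, and so on, with $\mc{A} \ncong \mc{B}$ but each being a $\Sigma^0_1$-elementary substructure of the next via the same padding trick as in the previous proof.

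The key steps, in order, are: (i) define the domain and operations of $\mc{A}$ as a tree with alternating level-shapes, using the unary parent operator $p$ and child operators $c_i$, plus perhaps one extra unary predicate or extra child index to distinguish odd levels from even levels; (ii) identify $\mc{B}$ as the substructure generated by an element one level up (so $\mc{B}$ is the tree ``starting'' at an odd level), and verify $\mc{B} \ncong \mc{A}$ by exhibiting a first-order (even quantifier-free-witnessed $\Sigma^0_1$ or $\Pi^0_1$) property distinguishing the two shapes; (iii) verify that $\mc{A} \prec_{\Sigma_1} \mc{B}$ by the same argument as in Theorem \ref{thm:first-ex} — given a witness tuple in $\mc{A}$ for an existential formula, pad the entries living at the bottom level by sending $(0,\tau)$ to an appropriate element deep inside $\mc{B}$, preserving all relations among the finitely many symbols involved; (iv) symmetrically verify that $\mc{B}$ contains a copy of $\mc{A}$ as a $\Sigma^0_1$-elementary substructure (this is the subtree generated two levels up inside $\mc{B}$, which is again of type $\mc{A}$); (v) conclude by Theorem \ref{thm:quasi-cat-eq} that $\Th_2(\mc{A})$ is not quasi categorical. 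One should also double-check $\mc{A}$ is finitely generated, which holds because a single bottom-level element generates everything via $p$ and the $c_i$, exactly as before.

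The main obstacle I expect is arranging that $\mc{B} \ncong \mc{A}$ while still having the two-way $\Sigma^0_1$-elementary embeddings — the padding argument that gives $\Sigma^0_1$-elementarity works precisely because the local structure of the tree is uniform, so making the global isomorphism types differ without disturbing that local uniformity requires care. The cleanest fix is probably to put the distinguishing feature ``at infinity'': e.g. mark only \emph{one} infinite branch (the spine of parents) in $\mc{A}$ but leave $\mc{B}$'s spine unmarked, or vice versa, or let even levels have children indexed by $\mathbb{Z}$ and odd levels by $\mathbb{Z}\setminus\{0\}$. Because any quantifier-free formula mentions only finitely many $c_i$, the padding can always be performed inside the ``generic'' part of the tree, so $\Sigma^0_1$-elementarity survives; but the extra marking is witnessed by a $\Pi^0_1$ (or $\Sigma^0_2$) sentence and so distinguishes $\mc{A}$ from $\mc{B}$ as isomorphism types. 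Verifying that such a marking really is invisible to $\Sigma^0_1$ formulas, yet visible to the $2$-theory, is the technical heart of the construction; everything else is a routine adaptation of Theorem \ref{thm:first-ex}. As in the previous theorem, I would also record a corollary transferring the example to a finitely generated group, using Proposition \ref{prop:transfer-cat} together with Proposition \ref{prop:cat-const} and Theorem \ref{thm:main-extra}(2).
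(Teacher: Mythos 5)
Your proposal is correct and follows essentially the same route as the paper. The paper's actual construction is exactly the first of the options you float: take the tree $\mc{A}$ from Theorem \ref{thm:first-ex} and add a single unary predicate $P$ marking the parity of the level (i.e.\ $P(n,\tau)$ iff $n+|\tau|$ is even), let $\mc{B}$ be the substructure generated by $(1,\langle\rangle)$, observe that $\mc{A}$ and $\mc{B}$ differ because $\mc{A}$'s unique non-$c_i$-image generator has $P$ while $\mc{B}$'s does not, and note that the same padding map $(0,\tau)\mapsto(1,k\mathbin{\widehat{\ }}\tau)$ shifts levels by an even amount and hence preserves $P$, so the $\Sigma^0_1$-elementarity argument carries over verbatim in both directions; Theorem \ref{thm:quasi-cat-eq} then finishes the job, and the group corollary follows exactly as you say.
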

\begin{proof}
The structure $\mc{A}$ will be the same as the structure of Theorem \ref{thm:first-ex}---consisting of unary operators $p$ and $(c_i)_{i \in \mathbb{Z}}$---but with the addition of a new unary relation $P$. The domain $A$ of $\mc{A}$ again consist of elements $\{ (n,\tau) \mid n \in \omega\text{ and }\tau \in \mathbb{Z}^{<\omega}\}$. As before, the parent $p(n,\tau)$ of $(n,\tau)$ is $(n,\tau^-)$ if $\tau \neq \la \ra$, and $(n+1,\la \ra)$ otherwise, and $c_i(n,\tau) = (n,\tau \concat i)$. We set $P(n,\tau)$ if $n + |\tau|$ is even. Then $P$ holds of the elements at every second level of the tree.

Let $\mc{B}$ be the structure generated by $(1,\la \ra)$. Then $\mc{A}$ and $\mc{B}$ are not isomorphic. Indeed, $\mc{A}$ has a generator $(0,\la \ra)$ on which $P$ holds and is not an image of any of the $c_i$'s, but $\mc{B}$ does not have such a generator.

We will show that $\mc{A} \preceq_{\Sigma_1} \mc{B}$ and $\mc{B} \preceq_{\Sigma_1} \mc{A}$, from which it will follow by Theorem \ref{thm:quasi-cat-eq} that $\Th_2(\mc{A})$ is not quasi categorical. To see that $\mc{A} \preceq_{\Sigma_1} \mc{B}$, we use the same argument as in Theorem \ref{thm:first-ex}, noting that $P(0,\tau) \Longleftrightarrow P(1,k \concat \tau)$. The argument that $\mc{B} \preceq_{\Sigma_1} \mc{A}$ is similar.
\end{proof}

\begin{corollary}
There is a finitely generated group $G$ such that $\Th_2(G)$ is not quasi-categorical.
\end{corollary}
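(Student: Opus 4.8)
The plan is to combine the previous theorem (the existence of a finitely generated structure $\mc{A}$ with $\Th_2(\mc{A})$ not quasi categorical) with the transfer results of Section \ref{sec:quasi}. So I would begin by letting $\mc{A}$ be the structure built in the previous theorem, and letting $G = G(\mc{A})$ be the finitely generated group from the construction in Section \ref{sec:groups}, with $\bar{c} = (b,c,d,f_1,f_2)$ the named constants, so that $(G,\bar{c}) = \tilde{G}(\mc{A})$ is effectively bi-interpretable with $\mc{A}$. By Lemma \ref{lem:finite-generation} (or Theorem \ref{thm:main-extra} (3)), $G$ is finitely generated since $\mc{A}$ is.

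Next I would apply Proposition \ref{prop:transfer-cat}: since $\mc{A}$ is finitely generated and $\Th_2(\mc{A})$ is not quasi categorical, $\Th_2(G(\mc{A})) = \Th_2(G)$ is not quasi categorical. This is essentially the whole argument, and it already gives a finitely generated group $G$ whose $2$-theory is not quasi categorical. Strictly speaking one should double-check that Proposition \ref{prop:transfer-cat} is stated as an ``if and only if'' and that the direction we need is the one proved there — it is, since the proposition asserts equivalence. So no extra work with constants is needed here: unlike the Scott sentence corollaries (where one passes from $(G,\bar{c})$ back to $G$ via Proposition \ref{prop:cat-const}), Proposition \ref{prop:transfer-cat} already delivers the statement about $\Th_2(G(\mc{A}))$ directly.

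If one instead wanted to route through the named-constants version, the alternative is: $\Th_2(\mc{A})$ not quasi categorical, together with Proposition \ref{prop:cat-const} (the orbit of $\bar c$ is $\Sigma^0_1$-definable by Theorem \ref{thm:main-extra} (2)), gives that $\Th_2(\tilde{G}(\mc{A})) = \Th_2(G,\bar{c})$ is not quasi categorical; but then to get back to $\Th_2(G)$ one would again use Proposition \ref{prop:cat-const} in the other direction. Either way the conclusion is immediate. I do not anticipate a genuine obstacle: all the heavy lifting — the construction of $G(\mc{A})$, its finite generation, the bi-interpretation, and the transfer of quasi categoricity — has already been done in the earlier sections, so this corollary is purely a matter of assembling the pieces. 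The only thing to be careful about is invoking the correct transfer proposition (Proposition \ref{prop:transfer-cat}, not \ref{prop:transfer-quasi}) and checking that ``quasi categorical'' rather than ``quasi Scott sentence'' is what is being transferred.

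\begin{proof}
Let $\mc{A}$ be the finitely generated structure from the previous theorem, so that $\Th_2(\mc{A})$ is not quasi categorical. Let $G = G(\mc{A})$ be the finitely generated group given by Theorem \ref{thm:main-extra}; by part (3) of that theorem, $G$ is finitely generated since $\mc{A}$ is. By Proposition \ref{prop:transfer-cat}, $\Th_2(G(\mc{A})) = \Th_2(G)$ is quasi categorical if and only if $\Th_2(\mc{A})$ is. Since $\Th_2(\mc{A})$ is not quasi categorical, neither is $\Th_2(G)$.
\end{proof}
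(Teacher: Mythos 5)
Your proof has a gap, and it stems from reading Proposition \ref{prop:transfer-cat} too literally. That proposition is stated with ``$G(\mc{A})$,'' but its proof goes through Lemmas \ref{lem:reduc} and \ref{lem:image}, which rely on the effective bi-interpretability between $\mc{A}$ and the functor's output; the bi-interpretation of Theorem \ref{thm:main-extra} is with $\tilde{G}(\mc{A}) = (G(\mc{A}), b, c, d, f_1, f_2)$, not with the bare group $G(\mc{A})$. (Indeed, $\mc{A}$ cannot in general be bi-interpretable with $G(\mc{A})$ without constants: the automorphism groups differ, as noted in the remark after Theorem \ref{thm:main-extra}.) So what Proposition \ref{prop:transfer-cat} actually delivers is that $\Th_2(G, \bar{c})$ is not quasi categorical, where $\bar{c}$ is the five-tuple of named constants. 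The statement you are asked to prove is about $\Th_2(G)$ with no constants named, so a further step is needed.

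That missing step is exactly Proposition \ref{prop:cat-const}, applied using Theorem \ref{thm:main-extra} (2), which guarantees that the orbit of $\bar{c}$ is definable by a $\Sigma^0_1$ (indeed finitary quantifier-free) formula. This is precisely the ``extra work with constants'' you argued was unnecessary. Your surrounding discussion actually shows you saw this parallelism with the Scott-sentence corollary, but you talked yourself out of it by trusting the surface form of Proposition \ref{prop:transfer-cat}'s statement. The corrected proof is: apply Proposition \ref{prop:transfer-cat} to get that $\Th_2(G,\bar{c})$ is not quasi categorical, then cite Theorem \ref{thm:main-extra} (2) for $\Sigma^0_1$-definability of the orbit of $\bar{c}$, and finally apply Proposition \ref{prop:cat-const} to conclude that $\Th_2(G)$ is not quasi categorical. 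Everything else in your argument (choice of $\mc{A}$, finite generation of $G$ via Theorem \ref{thm:main-extra} (3)) is fine.
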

\begin{proof}
Let $\mc{A}$ be the structure from the previous theorem. Let $G$ be the finitely generated group and $\bar{c} \in G$ the 5-tuple of elements with $(G,\bar{c}) = \tilde{G}(\mc{A})$. By Proposition \ref{prop:transfer-cat}, $\Th_2(G,\bar{c})$ is not quasi categorical. By Theorem \ref{thm:main-extra} (2), the orbit of $\bar{c}$ is $\Sigma^0_1$-definable. By Proposition \ref{prop:cat-const}, $\Th_2(G)$ is not quasi categorical.
\end{proof}

\begin{question}
Is there a finitely generated structure $\mc{A}$ such that $\Th_2(\mc{A})$ is quasi-categorical, but $\mc{A}$ has no d-$\Sigma^0_2$ quasi Scott sentence?
\end{question}

\bibliography{References}
\bibliographystyle{alpha}

\end{document}